\newtheorem{theorem}{Theorem}[section]
\newtheorem{lemma}[theorem]{Lemma}
\newtheorem{proposition}[theorem]{Proposition}
\newtheorem{corollary}[theorem]{Corollary}
\theoremstyle{definition}
\theoremstyle{remark}
\theoremstyle{remark}
\numberwithin{equation}{section}
\def\subsection{\@startsection{subsection}{2}%
	\z@{.5\linespacing\@plus.7\linespacing}
	{.5\baselineskip}%
	{\normalfont\centering\scshape}%
}
\let\div\undefined
\DeclareMathOperator{\div}{div}
\let\Div\undefined
\DeclareMathOperator{\Div}{Div}
\let\vol\undefined
\DeclareMathOperator{\vol}{vol}
\let\cof\undefined
\DeclareMathOperator{\cof}{cof}
\DeclareMathOperator{\sff}{\mathrm{I\!I}}
\let\tr\undefined
\DeclareMathOperator{\tr}{tr}
\title[ABP method]{Some geometric inequalities by the ABP method}
\author{Doanh Pham}
\address{Department of Mathematics, Rutgers University, Piscataway, NJ 08854, USA}
\email{doanh.pham@rutgers.edu}
\begin{document}
\maketitle

\begin{abstract}
In this paper, we apply the so-called Alexandrov-Bakelman-Pucci (ABP) method to establish some geometric inequalities. We first prove a logarithmic Sobolev inequality for closed $n$-dimensional minimal submanifolds $\Sigma$ of $\mathbb S^{n+m}$. As a consequence, it recovers the classical result that $|\mathbb S^n| \leq |\Sigma|$ for $m = 1,2$. Next, we prove a Sobolev-type inequality for positive symmetric two-tensors on smooth domains in $\mathbb R^n$ which was established by D. Serre when the domain is convex. In the last application of the ABP method, we formulate and prove an inequality related to quermassintegrals of closed hypersurfaces of the Euclidean space.
\end{abstract}

\section{Introduction}

\subsection{A logarithmic Sobolev inequality for minimal submanifolds of the unit sphere}

Logarithmic Sobolev inequalities have a long history of study and become important tools in analysis and geometry, see e.g. \cite{Brendle4, Ecker, Gross, Latala}. In particular, one of them is crucial in Perelman's proof of the monotonicity formula for the entropy under Ricci flow (see \cite{Perelman}). Let $\mathbb N$ be the set of positive integers. The sharp logarithmic Sobolev inequality for submanifolds in the Euclidean spaces, proved by S. Brendle, reads:
\begin{theorem}[\cite{Brendle4}]\label{Brendle logSobolev}
	Let $n,m \in \mathbb N$ and $\Sigma$ be a closed $n$-dimensional submanifold of $\mathbb R^{n+m}$. Then for every positive smooth function $f$ on $\Sigma$, we have
	$$\int_\Sigma f\left(\log f + n + \frac{n}{2}\log(4\pi) - n^2|H|^2\right) - \left(\int_\Sigma f \right) \log \left(\int_\Sigma f \right) \leq \int_\Sigma \frac{|\nabla^\Sigma f|^2}{f},$$
	where $H$ is the mean curvature vector of $\Sigma$.
\end{theorem}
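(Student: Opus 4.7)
The plan is to apply the ABP method on $\Sigma$ with a Gaussian weight $e^{-|\xi|^2/4}$ on the ambient $\mathbb R^{n+m}$. By approximation it suffices to treat a smooth $f > 0$. Set $\sigma := \int_\Sigma f$ and define
\[
c := \frac{1}{\sigma}\left(\int_\Sigma f\bigl[\log f + n + \tfrac{n}{2}\log(4\pi) - n^2|H|^2\bigr] - \int_\Sigma \frac{|\nabla^\Sigma f|^2}{f}\right),
\]
so that the claim becomes $c \leq \log\sigma$. I would then solve the linear elliptic equation
\[
\div_\Sigma(f\,\nabla^\Sigma u) = f\bigl[\log f + n + \tfrac{n}{2}\log(4\pi) - n^2|H|^2\bigr] - \frac{|\nabla^\Sigma f|^2}{f} - cf
\]
on the closed manifold $\Sigma$; its right-hand side integrates to zero by the definition of $c$, so standard theory for the weighted Laplacian $\div_\Sigma(f \nabla^\Sigma \cdot)$ produces a smooth $u$, unique up to an additive constant.

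Next I would introduce the transport map $\Phi \colon T^\perp\Sigma \to \mathbb R^{n+m}$, $\Phi(x,y) := \nabla^\Sigma u(x) + y$, together with the contact set $A \subset T^\perp\Sigma$ of pairs $(x,y)$ for which $x$ is a global maximizer on $\Sigma$ of $z \mapsto \langle \nabla^\Sigma u(x) + y, z\rangle - u(z)$. Since $\Sigma$ is compact, every $\xi \in \mathbb R^{n+m}$ has a maximizer $x_\xi$ of $z \mapsto \langle \xi, z\rangle - u(z)$, and taking $y := \xi - (\xi)^T$ gives $(x_\xi, y) \in A$ with $\Phi(x_\xi, y) = \xi$; hence $\Phi(A) = \mathbb R^{n+m}$. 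A computation in a local frame of $T^\perp\Sigma$ shows
\[
|\jac \Phi|(x,y) = \det\bigl(D^2_\Sigma u(x) - \langle y, \sff(x)\rangle\bigr),
\]
and the second-order condition at the maximizer forces the $n\times n$ matrix $D^2_\Sigma u - \langle y, \sff\rangle$ to be positive semidefinite on $A$.

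The area formula applied to the Gaussian $e^{-|\xi|^2/4}$ on $\mathbb R^{n+m}$, together with the orthogonal decomposition $|\nabla^\Sigma u + y|^2 = |\nabla^\Sigma u|^2 + |y|^2$, then yields
\[
(4\pi)^{(n+m)/2} \leq \int_\Sigma e^{-|\nabla^\Sigma u|^2/4}\int_{N_x\Sigma} e^{-|y|^2/4}\,\det\bigl(D^2_\Sigma u - \langle y, \sff\rangle\bigr)\,dy\,dA.
\]
On $A$ the AM-GM inequality together with $t^n \leq e^{n(t-1)}$ bound the determinant by $e^{\Delta_\Sigma u - n\langle y, H\rangle - n}$; the $y$-integral is then a shifted Gaussian evaluating to $(4\pi)^{m/2} e^{n^2|H|^2}$ after completing the square in $y$. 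Substituting $\Delta_\Sigma u$ from the PDE and then completing the square in $\nabla^\Sigma u$ in what remains causes the terms $n$, $\tfrac{n}{2}\log(4\pi)$, and $n^2|H|^2$ to cancel exactly against the Gaussian/AM-GM contributions, leaving
\[
e^c \leq \int_\Sigma f\,e^{-|\nabla^\Sigma \log f + \nabla^\Sigma u/2|^2}\,dA \leq \sigma,
\]
which is $c \leq \log\sigma$.

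The main obstacle I expect is the Jacobian computation on the total space of the normal bundle, where differentiating $\Phi$ in a tangent direction $V \in T_x\Sigma$ couples the Hessian $D^2_\Sigma u$ with the shape operator $\langle y, \sff \rangle$ via the ambient parallel transport; the lower-triangular block structure that makes $|\jac\Phi|$ collapse to a single $n\times n$ determinant is the key miracle. The second delicate point is the precise matching of the constants $n$, $\tfrac{n}{2}\log(4\pi)$, and $n^2|H|^2$ with those produced by the Gaussian weight and the two exponential steps, which is what makes the sharp form fall out; a different weight or driving term would produce residual positive terms and spoil sharpness.
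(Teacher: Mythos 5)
Your outline is correct and is, in substance, Brendle's own proof in \cite{Brendle4}, which the paper cites but does not reprove. The paper's Section~2 arguments for minimal submanifolds of $\mathbb S^{n+m}$ are built on exactly the same ABP template (solve $\div_\Sigma(f\,\nabla^\Sigma u)=\text{RHS}$ with zero mean, push forward by $\Phi(x,y)=\nabla^\Sigma u(x)+y$ on $T^\perp\Sigma$, restrict to the second-order contact set, and run the area formula against a radial weight), so you have correctly reconstructed both the theorem's original proof and the engine behind the paper's new results.

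A few points worth tightening if you write this out in full. First, as in the paper's proofs, the existence of $u$ via the Fredholm alternative requires $\Sigma$ connected; for disconnected $\Sigma$ one applies the inequality to each component and sums, using $a\log a + b\log b < (a+b)\log(a+b)$ for $a,b>0$. Second, the ``key miracle'' you flag — the block-triangular Jacobian $D\Phi=\begin{pmatrix} D^2_\Sigma u - \langle y,\sff\rangle & * \\ 0 & I_m\end{pmatrix}$ — follows because, in a normal-bundle frame $\{e_i\}\cup\{\nu_\alpha\}$ adapted at $x$, one has $\langle\overline{\nabla}_{e_i}\Phi,e_j\rangle = (D^2_\Sigma u)_{ij}-\langle y,\sff(e_i,e_j)\rangle$ (the cross term comes from $\langle\overline{\nabla}_{e_i}y,e_j\rangle = -\langle y,\sff(e_i,e_j)\rangle$), while $\overline{\nabla}_{\nu_\alpha}\Phi=\nu_\alpha$; this is verbatim the computation in Lemma~\ref{det(D Phi) in logSobolev} minus the $tx$ term. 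Third, the AM--GM step $\det M\le(\tr M/n)^n$ needs $M=D^2_\Sigma u-\langle y,\sff\rangle\succeq 0$, which is exactly what the second-order condition on the contact set supplies, and the passage $(\tr M/n)^n\le e^{\tr M-n}$ uses $t^n\le e^{n(t-1)}$ only for $t\ge 0$, which holds here. Your completion-of-square bookkeeping checks out: the $y$-integral yields $(4\pi)^{m/2}e^{n^2|H|^2}$ with $\tr\langle y,\sff\rangle=n\langle y,H\rangle$ under the paper's convention $H=\tfrac1n\tr\sff$, the $n^2|H|^2$ and $n$ cancel against the PDE, and $(4\pi)^{(n+m)/2}$ matches, leaving $e^c\le\int_\Sigma f\,e^{-|\nabla^\Sigma\log f+\nabla^\Sigma u/2|^2}\le\sigma$.
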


For $n \in \mathbb N$, let $\mathbb B^{n+1} = \{ x \in \mathbb R^{n+1}: |x| < 1\}$ be the unit ball and $\mathbb S^{n} = \partial \mathbb B^{n+1}$ be the unit sphere in $\mathbb R^{n+1}$ respectively. Our goal is to establish a logarithmic Sobolev inequality for closed minimal submanifolds of $\mathbb S^{n+m}$. In the simplest case when the minimal submanifold is $\mathbb S^n$, we recall the following classical fact:

\begin{theorem}[see e.g. \cite{Beckner}]
	For every positive smooth function $f$ on $\mathbb S^n$, we have
	\begin{equation*}
	\int_{\mathbb S^n} f\left(\log f + \log(|\mathbb S^n|) \right) - \left(\int_{\mathbb S^n}  f \right) \log \left(\int_{\mathbb S^n}  f \right) \leq \frac{1}{2n} \int_{\mathbb S^n}  \frac{|\nabla^{\mathbb S^n} f|^2}{f}.
	\end{equation*}
	The equality holds if and only if $f$ is a constant.
\end{theorem}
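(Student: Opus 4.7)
My plan is to prove this classical inequality via the Bakry--Émery $\Gamma_2$-calculus on $\mathbb{S}^n$. The unit sphere satisfies the curvature-dimension condition $\mathrm{CD}(n-1, n)$: its Ricci tensor equals $(n-1) g$, and the Hessian of any smooth function obeys the pointwise bound $|\nabla^2 \varphi|^2 \geq (\Delta \varphi)^2 / n$ (Cauchy--Schwarz applied to the trace). I expect that using \emph{both} ingredients of $\mathrm{CD}(n-1,n)$ will produce the sharp constant $1/(2n)$, whereas using only $\mathrm{Ric} \geq (n-1)g$ would give the weaker $1/(2(n-1))$.

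First I would reduce to the case $\int_{\mathbb{S}^n} f = |\mathbb{S}^n|$ by the homogeneity $f \mapsto \lambda f$ of the inequality; the target then collapses to
\[
\int_{\mathbb{S}^n} f \log f \;\leq\; \frac{1}{2n} \int_{\mathbb{S}^n} \frac{|\nabla^{\mathbb{S}^n} f|^2}{f}.
\]
Next I would run the heat flow $u(t, \cdot) = e^{t\Delta_{\mathbb{S}^n}} f$ and track the entropy and Fisher information
\[
\mathcal{E}(t) = \int_{\mathbb{S}^n} u \log u, \qquad \mathcal{I}(t) = \int_{\mathbb{S}^n} \frac{|\nabla u|^2}{u}.
\]
A short integration by parts gives $\mathcal{E}'(t) = -\mathcal{I}(t)$, and ergodicity of the heat flow on the connected compact sphere gives $u(t,\cdot) \to 1$ as $t \to \infty$, hence $\mathcal{E}(\infty) = 0$.

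The central step is the differential inequality $\mathcal{I}'(t) \leq -2n\,\mathcal{I}(t)$. For this I apply Bochner's identity to $\varphi = \log u$,
\[
\tfrac{1}{2} \Delta |\nabla \varphi|^2 = |\nabla^2 \varphi|^2 + \langle \nabla \Delta \varphi, \nabla \varphi \rangle + \mathrm{Ric}(\nabla \varphi, \nabla \varphi),
\]
substitute $\mathrm{Ric} = (n-1)g$ and $|\nabla^2 \varphi|^2 \geq (\Delta \varphi)^2/n$, then integrate against $u$ and perform several integrations by parts. Once this decay estimate is in hand, Grönwall gives $\mathcal{I}(t) \leq e^{-2nt}\mathcal{I}(0)$, and integrating $\mathcal{E}'(t) = -\mathcal{I}(t)$ from $0$ to $\infty$ yields $\mathcal{E}(0) \leq \mathcal{I}(0)/(2n)$, which is exactly the target inequality.

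The main obstacle I anticipate is extracting the sharp constant $2n$ rather than $2(n-1)$. The Ricci contribution in Bochner gives $2(n-1)\mathcal{I}(t)$ directly, and the remaining $2\mathcal{I}(t)$ must be squeezed out of the dimensional term $(\Delta \log u)^2/n$ via a Cauchy--Schwarz rearrangement that ties $\int u (\Delta \log u)^2$ back to $\mathcal{I}(t)$ along the flow. The equality case then forces $\nabla^2 \log u \equiv (\Delta \log u / n)\, g$ for all $t$, which combined with ergodicity identifies constants as the only extremizers, consistent with the spectral gap $\lambda_1(\mathbb{S}^n) = n$.
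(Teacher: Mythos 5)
The paper does not actually prove this statement; it is quoted as a classical fact with a pointer to Beckner's work, and the body of the paper never returns to it (the ABP argument in Section~2 yields a \emph{different} inequality, with the weaker constant $\frac{n+1}{2n^2}>\frac{1}{2n}$, for minimal submanifolds of $\mathbb S^{n+m}$). So there is no internal proof to compare against, only the question of whether your sketch is itself sound.

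Your overall framework — reduce to $\int_{\mathbb S^n} f=|\mathbb S^n|$, run the heat flow, use $\mathcal E'=-\mathcal I$ and Bochner for $\varphi=\log u$ under $\mathrm{CD}(n-1,n)$ — is the standard Bakry–\'Emery route, and the constant $n=\frac{KN}{N-1}$ with $K=n-1$, $N=n$ is indeed the correct target. The gap is in the step you yourself flag as the obstacle. The Bochner computation gives
\[
-\tfrac12\,\mathcal I'(t)=\int_{\mathbb S^n} u\,\bigl(|\nabla^2\log u|^2+(n-1)|\nabla\log u|^2\bigr)\;\ge\;(n-1)\,\mathcal I(t)+\tfrac1n\int_{\mathbb S^n} u\,(\Delta\log u)^2,
\]
and the only Cauchy–Schwarz bound available for the dimensional term is
\[
\int_{\mathbb S^n} u\,(\Delta\log u)^2\;\ge\;\frac{\bigl(\int_{\mathbb S^n} u\,\Delta\log u\bigr)^2}{\int_{\mathbb S^n} u}=\frac{\mathcal I(t)^2}{|\mathbb S^n|},
\]
which is \emph{quadratic} in $\mathcal I$, not linear. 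This is harmless where $\mathcal I$ is large, but along the heat flow $\mathcal I(t)\to0$, so near equilibrium the quadratic correction is negligible and the differential inequality you can actually extract behaves like $-\mathcal I'\gtrsim 2(n-1)\mathcal I$, giving only the suboptimal constant $\tfrac{1}{2(n-1)}$ through Gr\"onwall. The stronger pointwise/integral claim $\mathcal I'(t)\le-2n\,\mathcal I(t)$, i.e.\ $\int u|\nabla^2\log u|^2\ge\int u|\nabla\log u|^2$ for arbitrary positive $u$ on $\mathbb S^n$, does not follow from the Cauchy–Schwarz rearrangement you describe, and I do not see a short argument for it. The dimensional improvement from $n-1$ to $n$ is a genuine theorem (Bakry; Bakry–Ledoux), but its known proofs go through a self-improvement of the $\mathrm{CD}(K,N)$ condition or a nonlinear second-order ODE comparison in $t$, not a one-line linear Gr\"onwall bound. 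As written, your proof has a real hole at exactly this step, and you would need to replace the hand-waved ``Cauchy–Schwarz rearrangement'' with one of those more delicate arguments to close it.
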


The first main result in this paper is the following:

\begin{theorem}\label{my logSoblev m = 2}
	Let $n \in \mathbb N$ and $m = 1,2$. Suppose that $\Sigma$ is a closed $n$-dimensional minimal submanifold of $\mathbb S^{n+m}$. Then for every positive smooth function $f$ on $\Sigma$, we have
	\begin{equation}\label{eq:my logSobolev m = 2}
	\int_\Sigma f\left(\log f + \log(|\mathbb S^n|) \right) - \left(\int_\Sigma f \right) \log \left(\int_\Sigma f \right) \leq \frac{n+1}{2n^2} \int_\Sigma \frac{|\nabla^\Sigma f|^2}{f}.
	\end{equation}
The equality holds if and only if $f$ is a constant and $\Sigma$ is a totally geodesic $n$-sphere.
\end{theorem}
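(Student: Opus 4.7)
The inequality is scale-invariant in $f$, so I may normalize $\int_\Sigma f = |\mathbb S^n|$, which reduces the claim to $\int_\Sigma f\log f \leq \frac{n+1}{2n^2}\int_\Sigma |\nabla^\Sigma f|^2/f$. Following Brendle's ABP template, I next solve the divergence-form elliptic equation
\[
\div_\Sigma\!\bigl(f\,\nabla^\Sigma u\bigr) \;=\; f\bigl(\log f - \bar c\bigr) \qquad \text{on } \Sigma,
\]
with $\bar c = |\mathbb S^n|^{-1}\!\int_\Sigma f\log f$ chosen so that the right-hand side is $L^2$-orthogonal to constants; the Fredholm alternative plus elliptic regularity produce a smooth $u$, which will serve as the Kantorovich-type potential. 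For later use I rewrite the PDE as $\Delta^\Sigma u = (\log f - \bar c) - \langle\nabla^\Sigma\log f,\nabla^\Sigma u\rangle$.

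The key geometric input is that, because $\Sigma$ is minimal in $\mathbb S^{n+m}$, as a submanifold of $\mathbb R^{n+m+1}$ it satisfies $H\equiv -x$ and $|H|\equiv 1$, and the position vector $x$ itself is a distinguished unit normal to $\Sigma$. Working in the ambient $\mathbb R^{n+m+1}$, I introduce a transport map
\[
\Phi(x,y) \;=\; \nabla^\Sigma u(x) + y
\]
on a set $A$ inside the normal bundle $N\Sigma \subset \mathbb R^{n+m+1}$, carved out by a bound on $|y|$ together with a linear constraint on the radial component $\langle x,y\rangle$. For each $\xi$ in a suitable target set $U\subset\mathbb R^{n+m+1}$, minimizing the functional $x\mapsto \tfrac12|x|^2-\langle x,\xi\rangle-u(x)$ on $\Sigma$ produces a pair $(x,y)\in A$ with $\Phi(x,y)=\xi$: the first-order condition matches $\nabla^\Sigma u(x)$ to the tangential part of $\xi$, and the second-order condition delivers $D^2 u + \sff\cdot y \succeq 0$, where $\sff$ denotes the second fundamental form of $\Sigma$ in $\mathbb R^{n+m+1}$.

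On the contact set, $D\Phi$ splits into an $n\times n$ tangent block $D^2 u + \sff\cdot y$ and a normal-identity block, so the AM-GM inequality yields
\[
|\det D\Phi|\;\leq\; \tfrac{1}{n^n}\bigl(\Delta^\Sigma u - n\langle H,y\rangle\bigr)_+^n \;=\; \tfrac{1}{n^n}\bigl(\Delta^\Sigma u + n\langle x,y\rangle\bigr)_+^n.
\]
Integrating over $A$ via the area formula, substituting the PDE expression for $\Delta^\Sigma u$, expanding the $n$th power, and handling the cross term $\langle\nabla^\Sigma\log f,\nabla^\Sigma u\rangle$ by Cauchy-Schwarz converts the change-of-variables bound into the desired inequality with $\int_\Sigma|\nabla^\Sigma f|^2/f$ on the right. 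The distinguished radial direction furnishes an extra degree of freedom in the $y$-integration: after a linear tilt $\langle x,y\rangle$ is folded into the ABP bound and the $t$-integral along the radial direction is performed, the effective AM-GM dimension becomes $n+1$ rather than $n$, producing the prefactor $\frac{n+1}{2n^2}$. Calibrating $A$, $U$, and the tilt so that all geometric normalizations line up precisely will be the main obstacle.

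Finally, equality forces scalarity of the tangent block $D^2 u + \sff\cdot y$ on the contact set, saturation of the normal and radial integrations, and vanishing of the Cauchy-Schwarz cross term; unwinding these conditions yields $\nabla^\Sigma f \equiv 0$ and $\sff_{\Sigma\subset\mathbb S^{n+m}}\equiv 0$, so $f$ is constant and $\Sigma$ is totally geodesic in $\mathbb S^{n+m}$, hence a great $n$-sphere.
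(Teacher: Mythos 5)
Your outline correctly identifies the ABP template (solve a divergence-form PDE, extend the gradient map to the normal bundle plus the radial direction, invoke AM-GM on the tangent block, integrate), but the central computational choices are off in ways that prevent the argument from closing.

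First, the PDE is not the right one. You solve $\div_\Sigma(f\,\nabla^\Sigma u)=f(\log f-\bar c)$, whereas the working choice is
\[
\div_\Sigma\bigl(f\,\nabla^\Sigma u\bigr)=\frac{n}{n+1}\,f\log f-\frac{1}{2n}\,\frac{|\nabla^\Sigma f|^2}{f},
\]
with $f$ rescaled so the right-hand side integrates to zero. Both the coefficient $\frac{n}{n+1}$ and the subtracted $\frac{1}{2n}|\nabla^\Sigma f|^2/f$ term are essential. Dividing by $f$ and \emph{completing the square} (not Cauchy--Schwarz) gives $\Delta_\Sigma u=\frac{n}{n+1}\log f+\frac n2|\nabla^\Sigma u|^2-\frac12\bigl|\tfrac{1}{\sqrt n}\nabla^\Sigma\log f+\sqrt n\,\nabla^\Sigma u\bigr|^2$; the $-\frac{1}{2n}|\nabla^\Sigma f|^2/f$ term in the PDE exactly cancels the $\frac{1}{2n}|\nabla^\Sigma\log f|^2$ generated by the square, and the $\frac{n}{n+1}\log f$ then converts via $\log\lambda\le\lambda-1$ into $n\bigl(f^{1/(n+1)}-1\bigr)$. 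With your PDE, after the same manipulation you are stuck with an uncancelled $+\frac{1}{2n}|\nabla^\Sigma\log f|^2$ sitting on the wrong side, and ``Cauchy--Schwarz on the cross term'' does not remove it. The vague plan of ``expanding the $n$th power'' is a red flag: the bound has to collapse to a perfect power so that the $t$-integral is exact, and that only happens with the constant $\frac{n}{n+1}$ in place.

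Second, you omit the $\sqrt{1-|\nabla^\Sigma u|^2}$ structure entirely. The domain on which the contact argument produces nonnegative Hessian is $|\nabla^\Sigma u|^2+|y|^2+t^2<1$, so $t$ ranges over an interval whose endpoints involve $\sqrt{1-|\nabla^\Sigma u|^2}$; combined with $\sqrt{1-\theta}\le 1-\theta/2$ and the completion of the square, one arrives at $\Delta_\Sigma u+nt\le n\bigl(f^{1/(n+1)}-\sqrt{1-|\nabla^\Sigma u|^2}+t\bigr)$, and then the $t$-integral of the $n$th power over the allowed range gives exactly $\frac{1}{n+1}f$. Your sketch has no mechanism producing either the $(n+1)$th root of $f$ or the clean interval of $t$-integration, so there is no route to the prefactor $\frac{n+1}{2n^2}$ beyond a heuristic claim.

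Third, you treat $m=1$ and $m=2$ uniformly, but the fiber (normal) integration in the ABP step genuinely distinguishes them. For $m\ge 2$ one can bound the annular volume $|\{y:r^2\le|\nabla^\Sigma u|^2+t^2+|y|^2<1\}|$ using $b^{m/2}-a^{m/2}\le\frac m2(b-a)$, which is false for $m=1$ (the map $s\mapsto s^{1/2}$ is concave, not convex). The codimension-one case instead requires a separate Fubini argument, e.g.\ integrating $\int_{\mathbb B^{n+2}}(1-|\xi|^2)^{-1/2}\,d\xi$ and using $\int_{-a}^a(a^2-s^2)^{-1/2}\,ds=\pi$. Your write-up does not engage with this dichotomy, which is one reason the theorem is restricted to $m\le 2$ (and why the paper handles $m=1$ either directly or by embedding $\mathbb S^{n+1}\hookrightarrow\mathbb S^{n+2}$).

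Finally, the $\tfrac12|x|^2$ in your test functional is a constant on $\Sigma\subset\mathbb S^{n+m}$, so it contributes nothing; this is harmless, but it suggests the Euclidean version of the argument was transplanted without fully adapting to the spherical geometry, which is consistent with the missing $\sqrt{1-|\nabla^\Sigma u|^2}$ bookkeeping noted above.
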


Taking $f \equiv 1$ in Theorem \ref{my logSoblev m = 2}, we immediately recover the following classical result:
\begin{corollary}
	Let $n \in \mathbb N$, $m=1,2$, and $\Sigma$ be a closed $n$-dimensional minimal submanifold of $\mathbb S^{n+m}$. Then $|\mathbb S^n| \leq |\Sigma|$. The equality holds if and only if $\Sigma$ is a totally geodesic $n$-sphere.
\end{corollary}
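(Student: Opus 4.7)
The plan is to derive the corollary as an immediate consequence of Theorem \ref{my logSoblev m = 2} by making the simplest admissible choice of test function, namely $f \equiv 1$.

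First I would substitute $f \equiv 1$ into the inequality \eqref{eq:my logSobolev m = 2}. The gradient term on the right-hand side vanishes since $\nabla^\Sigma 1 = 0$, and the term $\int_\Sigma f \log f$ on the left vanishes since $\log 1 = 0$. What remains is
\[
|\Sigma| \log(|\mathbb S^n|) - |\Sigma| \log(|\Sigma|) \leq 0,
\]
where I have used $\int_\Sigma 1 = |\Sigma|$. Since $|\Sigma| > 0$, dividing by $|\Sigma|$ and exponentiating gives $|\mathbb S^n| \leq |\Sigma|$, which is the desired inequality.

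For the equality case, I would simply invoke the sharpness statement in Theorem \ref{my logSoblev m = 2}. If equality holds in $|\mathbb S^n| \leq |\Sigma|$ with $f \equiv 1$, then equality also holds in \eqref{eq:my logSobolev m = 2} for the constant function $f \equiv 1$, and by the theorem this forces $\Sigma$ to be a totally geodesic $n$-sphere. Conversely, if $\Sigma$ is a totally geodesic $n$-sphere then $|\Sigma| = |\mathbb S^n|$ trivially.

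There is essentially no obstacle here: the entire content of the corollary is packaged inside Theorem \ref{my logSoblev m = 2}, and the only "work" is to verify that the chosen test function $f \equiv 1$ is admissible (it is positive and smooth on the closed manifold $\Sigma$) and that the boundary terms in the functional inequality reduce correctly. The deeper difficulty of course lies in proving the logarithmic Sobolev inequality itself via the ABP method, but that is carried out in the proof of Theorem \ref{my logSoblev m = 2} and need not be repeated here.
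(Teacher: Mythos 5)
Your proof is correct and takes exactly the same route as the paper: substitute $f\equiv 1$ into Theorem \ref{my logSoblev m = 2}, simplify, divide by $|\Sigma|$, and exponentiate, then invoke the equality case of the theorem for rigidity. This is precisely what the paper does in the sentence preceding the Corollary.
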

There are several ways to deduce the above Corollary. One way is by using the isoperimetric inequality for minimal surfaces in the Euclidean spaces obtained in \cite{Brendle3}. Other ways, for any codimension, include a monotonicity type argument  (see \cite{Brendle1}) and the method using heat kernel comparisons (see \cite{ChengLiYau}).\\

For an arbitrary codimension $m \geq 2$ of $\Sigma$, we also have:

\begin{theorem}\label{my logSobolev}
Let $n \in \mathbb N$, $m \geq 2$, and $\Sigma$ be a closed $n$-dimensional minimal submanifold of $\mathbb S^{n+m}$. Then for every positive smooth function $f$ on $\Sigma$, we have
\begin{equation}\label{eq:my logSobolev}
\int_\Sigma f\left(\log f + \log\left((n+1)\frac{|\mathbb S^{n+m}|}{|\mathbb S^{m-1}|}\right) \right) - \left(\int_\Sigma f \right) \log \left(\int_\Sigma f \right) \leq \frac{n+1}{2n^2} \int_\Sigma \frac{|\nabla^\Sigma f|^2}{f}.
\end{equation}
\end{theorem}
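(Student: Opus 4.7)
The plan is to establish Theorem~\ref{my logSobolev} by the Alexandrov--Bakelman--Pucci method, using Brendle's proof of Theorem~\ref{Brendle logSobolev} as a template but performing the construction on the cone over $\Sigma$. Specifically, consider $C(\Sigma) := \{t\,x : t \in [0,1],\ x \in \Sigma\}$, which is an $(n+1)$-dimensional minimal submanifold of $\mathbb{R}^{n+m+1}$ with boundary $\Sigma$ (precisely because $\Sigma$ is minimal in $\mathbb{S}^{n+m}$). The dimension $n+1$ of $C(\Sigma)$ accounts for the factor $n+1$ that appears in both the prefactor and the gradient constant in \eqref{eq:my logSobolev}, while the codimension $m$ of $C(\Sigma)$ in $\mathbb{R}^{n+m+1}$, together with the identity $|\mathbb{S}^{n+m}|/|\mathbb{S}^{m-1}| = \int_{B^{n+1}}(1-|y|^2)^{(m-2)/2}\,dy$, should account for the spherical quotient in the prefactor.

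Given a positive smooth $f$ on $\Sigma$, I would extend $f$ to a suitable function $\tilde f$ on $C(\Sigma)$ (for instance $\tilde f(t x) = t^{\alpha} f(x)$ for an exponent $\alpha$ matched to the PDE below) and solve the elliptic equation
\[
\operatorname{div}_{C(\Sigma)}\bigl(\tilde f\,\nabla u\bigr) \;=\; \tilde f\bigl(\log\tilde f - \beta\bigr)
\]
on $C(\Sigma)$, with a Neumann condition on $\Sigma$ that encodes $f$ and appropriate regularity at the cone vertex. Next, I would define the transport map $\Phi(p,y) := \nabla u(p) + y$ for $p \in C(\Sigma)$ and $y$ in the closed unit ball of the normal bundle $N C(\Sigma) \subset T\mathbb{R}^{n+m+1}$ (of rank $m$), restricted to the contact set $A$ on which the semiconcavity of $u$ ensures monotonicity of $\Phi$. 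Since the cone is minimal, AM--GM over the $n+1$ tangential eigenvalues yields a Jacobian bound of the form
\[
|\det D\Phi(p,y)| \;\le\; \frac{1}{(n+1)^{n+1}}\bigl(\Delta^{C(\Sigma)} u(p)\bigr)^{n+1},
\]
and integrating this over $A$, together with the surjectivity of $\Phi$ onto a region in $\mathbb{R}^{n+m+1}$ whose weighted volume equals $(n+1)|\mathbb{S}^{n+m}|/|\mathbb{S}^{m-1}|$, should produce an ABP-type integral inequality on $C(\Sigma)$.

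To finish, I would substitute the PDE for $\Delta^{C(\Sigma)} u$, convert the cone integrals to boundary integrals on $\Sigma$ using integration by parts and the specific form of $\tilde f$, and apply Young's inequality with the sharp constant $\tfrac{n+1}{2n^2}$ to the cross term $\langle\nabla \tilde f,\nabla u\rangle/\tilde f$. The principal obstacle will be tuning all these pieces --- the scaling exponent $\alpha$, the Neumann data, the weighted integration over the normal disk bundle, and the Young step --- so that the prefactor $(n+1)|\mathbb{S}^{n+m}|/|\mathbb{S}^{m-1}|$ and the gradient coefficient $\tfrac{n+1}{2n^2}$ emerge simultaneously from a single extremality condition. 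The hypothesis $m \ge 2$ is natural, since the weight $(1-|y|^2)^{(m-2)/2}$ is integrable on $B^{n+1}$ precisely in this range; for $m = 1,2$ a finer analysis recovers the sharper constant $\log|\mathbb{S}^n|$ of Theorem~\ref{my logSoblev m = 2}.
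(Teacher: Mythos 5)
Your proposal via the cone over $\Sigma$ does not match the paper's actual argument, and as written it has gaps that would block a complete proof. The paper does \emph{not} form $C(\Sigma)$ or solve any PDE on a cone. Instead, it works directly on $\Sigma$: after normalizing $f$ so that $\tfrac{n}{n+1}\int_\Sigma f\log f = \tfrac{1}{2n}\int_\Sigma |\nabla^\Sigma f|^2/f$, it solves $\operatorname{div}(f\nabla^\Sigma u) = \tfrac{n}{n+1}f\log f - \tfrac{1}{2n}|\nabla^\Sigma f|^2/f$ on $\Sigma$ itself, and defines $\Phi(x,y,t)=\nabla^\Sigma u(x)+y+tx$ with $x\in\Sigma$, $y\in T_x^\perp\Sigma$ (rank $m$, normal bundle inside $\mathbb S^{n+m}$), and $t\in\mathbb R$ a scalar. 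The term $tx$ encodes the radial/sphere-normal direction, so the full ``transport'' fibers over $\Sigma$ have dimension $m+1$, landing in $\mathbb R^{n+m+1}$. This sidesteps the cone vertex singularity, the need to extend $f$ off $\Sigma$, and the Neumann boundary data that your construction would require — all points you acknowledge as unresolved.

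Two concrete discrepancies deserve emphasis. First, the Jacobian bound in the paper is an $n$-th power (AM--GM over the $n$ tangential eigenvalues of $D^2_\Sigma u - \sff_y + tg_\Sigma$), not the $(n+1)$-st power your cone picture suggests; the $t$-direction and the rank-$m$ normal fiber enter as a block-upper-triangular identity block, contributing Jacobian $1$. After integrating in $t$ you pick up a factor $\tfrac{1}{n+1}$, and the constants $\tfrac{n+1}{2n^2}$ and $(n+1)|\mathbb S^{n+m}|/|\mathbb S^{m-1}|$ emerge from this structure together with the elementary bounds $\tfrac{1}{n+1}\log f\le f^{1/(n+1)}-1$ and $\sqrt{1-\theta}\le 1-\theta/2$. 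Second, and crucially, the role of $m\ge 2$ is not the integrability of a weight $(1-|y|^2)^{(m-2)/2}$ as you suggest, but a limiting \emph{annulus} argument: the paper covers $\{r\le|\xi|<1\}$ by $\Phi(V_r)$, estimates the volume of the normal-fiber slice by $\tfrac{m}{2}|\mathbb B^m|(1-r^2)$ using the inequality $b^{m/2}-a^{m/2}\le\tfrac{m}{2}(b-a)$ (valid only for $m\ge 2$), then divides by $(1-r)$ and lets $r\to1$. That step is entirely absent from your plan, and without it the prefactor $(n+1)|\mathbb S^{n+m}|/|\mathbb S^{m-1}|$ cannot be reached. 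Your proposal is a reasonable heuristic for where the dimensional counting comes from, but it is not a proof and its central technical pieces (extension of $f$, vertex regularity, Neumann data, the limiting argument) are left unresolved.
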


The inequality (\ref{eq:my logSobolev m = 2}) is a consequence of Theorem \ref{my logSobolev}. Indeed, taking $m = 2$ and using the formula $(n+1) |\mathbb S^{n+2}| = 2\pi|\mathbb S^n|$, the constant in the first integrand of (\ref{eq:my logSobolev}) reduces to $\log(|\mathbb S^n|)$. For $m = 1$, by viewing $\mathbb S^{n+1}$ as the (totally geodesic) equator of $\mathbb S^{n+2}$, we deduce that every minimal hypersurface of $\mathbb S^{n+1}$ is also a minimal submanifold of $\mathbb S^{n+2}$; so (\ref{eq:my logSobolev m = 2}) follows as well in this case.

As the readers will see in details of the proof of Theorem \ref{my logSobolev}, the condition $m \geq 2$ is necessary. More specifically, if one directly takes $m = 1$ in the proof of the Theorem, the last step of the argument does not hold. For this reason, we provide in Section \ref{sec:my logSoblev m = 1} a complete (and slightly simpler) proof of (\ref{eq:my logSobolev m = 2}) in codimension $1$ setting, despite some repetitions and the fact that it and the proof for $m \geq 2$ are similar.\\
\\
\textit{Relation to Yau's conjecture}: Assume that $\Sigma$ is a closed minimal hypersurface of $\mathbb S^{n+1}$. Let $0 < \lambda_1(\Sigma)$ be the first eigenvalue of the Laplacian operator $- \Delta_\Sigma$ on $\Sigma$. By a direct computation, the restriction to $\Sigma$ of any linear functional $L$ on $\mathbb R^{n+1}$ solves
	$$-\Delta_\Sigma L = nL.$$
Thus $\lambda_1(\Sigma) \leq n$. Recall that $\lambda_1(\mathbb S^n) = n$. It has been a conjecture by S.T. Yau that $ \lambda_1(\Sigma) = n$ (see \cite{Yau1}). In \cite{TangYang1}, Z. Tang and W. Yan verified the conjecture when $\Sigma$ is isoparametric. In the general case, Choi and Wang \cite{ChoiWang1} proved that $\lambda_1(\Sigma) \geq n/2$. Later, by a careful analysis of Choi and Wang's argument, S. Brendle \cite{Brendle2} pointed out that the inequality is strict, i.e. $\lambda_1(\Sigma) > n/2$. The relation between Yau's conjecture and the logarithmic Sobolev inequality is based on the following fact:

\begin{theorem}[see e.g. \cite{Latala}]\label{Poincare and logSobolev}
Let $n, m \in \mathbb N$ and $\Sigma$ be an $n$-dimensional submanifold of $\mathbb R^{n+m}$. Suppose there exists a constant $\alpha > 0$ such that: for every positive smooth function $f$ on $\Sigma$, it holds
$$\int_\Sigma f(\log f + \log |\Sigma|) - \left(\int_\Sigma f \right) \log\left(\int_\Sigma f \right) \leq \frac{1}{2\alpha} \int_\Sigma \frac{|\nabla^\Sigma f|^2}{f},$$
then $\lambda_1(\Sigma) \geq \alpha$.
\end{theorem}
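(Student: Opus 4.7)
The plan is to use the standard linearization (Rothaus) argument: the log-Sobolev inequality, applied to a function close to a constant, degenerates at leading order into a Poincar\'e-type inequality, from which the eigenvalue bound falls out.

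First I would let $u$ be a smooth first eigenfunction of $-\Delta_\Sigma$, so that $-\Delta_\Sigma u = \lambda_1(\Sigma) u$. Since eigenfunctions corresponding to positive eigenvalues are orthogonal to the constants, I may assume $\int_\Sigma u = 0$. Normalize so that $\sup_\Sigma |u| \leq 1$. Then for every sufficiently small $\epsilon > 0$, the function $f_\epsilon := 1 + \epsilon u$ is a positive smooth function on $\Sigma$ and is an admissible test function in the assumed log-Sobolev inequality.

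Next I would expand both sides of the assumed inequality to second order in $\epsilon$. Using $\log(1+\epsilon u) = \epsilon u - \tfrac{1}{2}\epsilon^2 u^2 + O(\epsilon^3)$ and $\int_\Sigma u = 0$, a direct computation gives
\begin{equation*}
\int_\Sigma f_\epsilon \log f_\epsilon = \tfrac{1}{2}\epsilon^2 \int_\Sigma u^2 + O(\epsilon^3), \qquad \int_\Sigma f_\epsilon = |\Sigma|,
\end{equation*}
so that the left-hand side of the log-Sobolev inequality applied to $f_\epsilon$ equals $\tfrac{1}{2}\epsilon^2 \int_\Sigma u^2 + O(\epsilon^3)$, the terms $|\Sigma|\log|\Sigma|$ cancelling. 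Similarly, since $|\nabla^\Sigma f_\epsilon|^2 = \epsilon^2 |\nabla^\Sigma u|^2$ and $f_\epsilon = 1 + O(\epsilon)$, the right-hand side equals $\tfrac{1}{2\alpha} \epsilon^2 \int_\Sigma |\nabla^\Sigma u|^2 + O(\epsilon^3)$. The error terms are uniform since $u$ and $|\nabla^\Sigma u|$ are bounded by smoothness and compactness.

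Substituting into the hypothesis, dividing by $\epsilon^2$, and letting $\epsilon \to 0^+$, I obtain the Poincar\'e inequality $\alpha \int_\Sigma u^2 \leq \int_\Sigma |\nabla^\Sigma u|^2$. Combining with the eigenvalue identity $\int_\Sigma |\nabla^\Sigma u|^2 = \lambda_1(\Sigma) \int_\Sigma u^2$ and $\int_\Sigma u^2 > 0$ yields $\lambda_1(\Sigma) \geq \alpha$, as desired. There is no real obstacle here; the only care required is in bookkeeping the Taylor expansions and in verifying that the first-order terms vanish (which is exactly why the assumption $\int_\Sigma u = 0$ is made).
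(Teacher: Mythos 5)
Your proof is correct, and it is the standard linearization (Rothaus-type) argument showing that a log-Sobolev inequality implies a spectral gap; the paper itself does not supply a proof but simply cites \cite{Latala}, where this linearization is the usual route. One small remark: you establish the Poincar\'e inequality $\alpha\int_\Sigma u^2 \le \int_\Sigma |\nabla^\Sigma u|^2$ only for the particular eigenfunction and then invoke the eigenvalue identity, which is fine; alternatively the same expansion works for an arbitrary smooth mean-zero $u$, after which the variational characterization of $\lambda_1$ gives the conclusion without ever needing to produce an eigenfunction.
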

In view of Theorem \ref{Poincare and logSobolev}, if one can improve the term $\log(|\mathbb S^n|)$ in Theorem \ref{my logSoblev m = 2} to $\log(|\Sigma|)$, then it would follow that $\lambda_1(\Sigma) \geq n^2/(n+1)$.

The proofs of Theorems \ref{my logSobolev} and \ref{my logSoblev m = 2} are given in Section \ref{Proof of the logarithmic Sobolev inequality}. They are inspired by the works of S. Brendle \cite{Brendle5, Brendle3, Brendle4}

\subsection{A Sobolev-type inequality for positive symmetric tensors}

Let $n \in \mathbb N$ and $A$ be an $n \times n$ matrix-valued function in a bounded domain $\Omega \subset \mathbb R^n$. The divergence of $A$ is the vector field in $\Omega$ defined by
$$\Div A(x) \coloneqq
\renewcommand{\arraystretch}{1.1}
\begin{pmatrix}
\sum_{j = 1}^n \partial_{x_j} A_{1j}(x)\\
\sum_{j = 1}^n \partial_{x_j} A_{2j}(x)\\\
\vdots\\
\sum_{j = 1}^n \partial_{x_j} A_{nj}(x)\
\end{pmatrix}.
$$
Furthermore, when $\Div A = 0$ in $\Omega$, we say that $A$ is \textit{divergence-free}.

In \cite{Serre}, D. Serre proved an inequality involving positive symmetric matrix-valued functions in $\mathbb R^n$ and showed some applications to fluid dynamics. In smooth cases, it reads:

\begin{theorem}[\cite{Serre}, Theorem 2.3 and Proposition 2.2] \label{Serre} Let $n \in \mathbb N$ and $\Omega$ be a smooth bounded convex domain in $\mathbb R^n$. If $A$ is a smooth uniformly positive symmetric matrix-valued function on $\overline{\Omega}$ (the closure of $\Omega$), then
\begin{equation}\label{eq:Serre}
n^{\frac{n-1}{n}}\left| \mathbb S^{n-1} \right|^{\frac{1}{n}} \left(\int_\Omega \det(A(x))^{\frac{1}{n-1}} dx\right)^{\frac{n-1}{n}} \leq \int_{\partial \Omega} |A(x) \nu| d\sigma(x) + \int_\Omega |\Div A(x)| dx,
\end{equation}
where $\nu$ is the unit outer normal vector field on $\partial \Omega$. When $A$ is divergence-free, the equality in (\ref{eq:Serre}) holds if and only if there is a smooth convex function $u$ on $\Omega$ such that $\nabla u(\Omega)$ is a ball centered at the origin and $A = (\cof D^2u)$, the cofactor matrix of $D^2 u$.
\end{theorem}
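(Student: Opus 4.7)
The plan is to apply the ABP method to a Neumann boundary-value problem adapted to the divergence-form operator $\Div(A\nabla\cdot)$. Write $J:=\int_\Omega\det(A)^{1/(n-1)}\,dx$, $B:=\int_{\partial\Omega}|A\nu|\,d\sigma$, and $D:=\int_\Omega|\Div A|\,dx$; since $A$ is uniformly positive, $J>0$. Using $|\mathbb{S}^{n-1}|=n|B_1(0)|$, the inequality becomes equivalent after raising to the $n$-th power to $n^n|B_1(0)|J^{n-1}\leq(B+D)^n$, and setting $\alpha:=(B+D)/J$ reduces to proving $|B_1(0)|\leq(\alpha/n)^n J$. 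I will first choose a smooth vector field $Y$ on $\overline\Omega$ with $|Y|\leq 1$ and $Y\cdot\Div A=-|\Div A|$ pointwise (obtained by mollifying the Borel selection $-\Div A/|\Div A|$), and then solve the oblique Neumann problem
\begin{equation*}
\begin{cases}\Div(A\nabla u)=\alpha\,\det(A)^{1/(n-1)}+Y\cdot\Div A&\text{in }\Omega,\\A\nabla u\cdot\nu=|A\nu|&\text{on }\partial\Omega.\end{cases}
\end{equation*}
The compatibility condition reads $\alpha J-D=B$, which is precisely the definition of $\alpha$, so standard elliptic theory for uniformly elliptic operators with smooth conormal boundary data on a smooth bounded domain yields a classical solution $u\in C^{2,\beta}(\overline\Omega)$, unique up to additive constants.

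Next I introduce the lower contact set $\Sigma:=\{x\in\Omega:u(y)\geq u(x)+\nabla u(x)\cdot(y-x)\text{ for every }y\in\overline\Omega\}$, so that $D^2u(x)\geq 0$ on $\Sigma$. The key geometric claim is $\nabla u(\Sigma)\supseteq B_1(0)$. Fix $\xi\in B_1(0)$ and let $x_\xi\in\overline\Omega$ achieve the minimum of $y\mapsto u(y)-\xi\cdot y$. If $x_\xi$ were on $\partial\Omega$, the variational inequality over the cone of feasible directions would force $\nabla u(x_\xi)-\xi=-c\,\nu(x_\xi)$ for some $c\geq 0$; taking the inner product with $A(x_\xi)\nu(x_\xi)$ and invoking the boundary condition yields
\begin{equation*}
c\;=\;\frac{\xi\cdot A\nu-|A\nu|}{A\nu\cdot\nu}.
\end{equation*}
Since $A\nu\cdot\nu>0$ and $\xi\cdot A\nu\leq|\xi|\,|A\nu|<|A\nu|$, this gives $c<0$, a contradiction. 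Hence $x_\xi$ must be interior, $\nabla u(x_\xi)=\xi$, and $x_\xi\in\Sigma$, proving the covering.

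On the subset $\Sigma':=\Sigma\cap\{|\nabla u|<1\}$, which still contains each $x_\xi$, the matrix AM--GM inequality for the symmetric positive semi-definite pair $(A,D^2u)$ gives
\begin{equation*}
n\bigl(\det(A)\det(D^2u)\bigr)^{1/n}\leq\tr(AD^2u)=\alpha\det(A)^{1/(n-1)}+(Y-\nabla u)\cdot\Div A.
\end{equation*}
The choice of $Y$ together with $|\nabla u|<1$ on $\Sigma'$ gives $(Y-\nabla u)\cdot\Div A\leq(|\nabla u|-1)|\Div A|\leq 0$, hence $\det(D^2u)\leq(\alpha/n)^n\det(A)^{1/(n-1)}$ pointwise on $\Sigma'$. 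The area formula applied to $\nabla u|_{\Sigma'}$ together with $\nabla u(\Sigma')\supseteq B_1(0)$ then yields
\begin{equation*}
|B_1(0)|\leq\int_{\Sigma'}\det(D^2u)\,dx\leq(\alpha/n)^n\int_\Omega\det(A)^{1/(n-1)}\,dx=(\alpha/n)^n J,
\end{equation*}
which is exactly the reformulated inequality.

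The hard part will be constructing the auxiliary field $Y$ with enough regularity to admit a classical solution of the Neumann problem while preserving the pointwise identity $Y\cdot\Div A=-|\Div A|$ that is needed for the sign condition $(Y-\nabla u)\cdot\Div A\leq 0$ on the contact set; a naive smoothing of $-\Div A/|\Div A|$ loses this identity near the zero set of $\Div A$. A standard regularization (for instance, replacing $|\Div A|$ by $\sqrt{|\Div A|^2+\varepsilon^2}-\varepsilon$ and smoothing the associated direction field) should produce an $\varepsilon$-problem to which the argument above applies uniformly, with the conclusion following by passing to the limit $\varepsilon\to 0^+$. Finally, in the divergence-free case, tracking the matrix AM--GM equality $D^2u=\lambda A^{-1}$ together with the saturated inclusion $\nabla u(\Sigma)=B_1(0)$ forces $A=\cof(D^2 v)$ for the convex rescaling $v=(n/\alpha)u$, and $\nabla v(\Omega)$ is a ball centered at the origin, giving the stated characterization of equality.
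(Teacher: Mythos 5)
Your approach is essentially the paper's own ABP proof of the stronger Theorem~\ref{my improved Serre}: you drop the convexity hypothesis, solve a Neumann problem with conormal data $A\nabla u\cdot\nu=|A\nu|$, analyze the lower contact set intersected with $\{|\nabla u|<1\}$, use the minimum of $y\mapsto u(y)-\xi\cdot y$ to show interior contact via the oblique boundary condition, apply the matrix AM--GM inequality $\det(AD^2u)\le(\tr(AD^2u)/n)^n$, and conclude with the area formula. The only substantive deviation is a self-inflicted complication: you introduce an auxiliary vector field $Y$ with $|Y|\le1$ and $Y\cdot\Div A=-|\Div A|$ pointwise, and then flag its smooth construction as ``the hard part,'' proposing an $\varepsilon$-regularization. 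This difficulty is phantom. The paper simply places $-|\Div A|$ on the right-hand side of the PDE,
$$
\div(A\nabla u)=n(\det A)^{1/(n-1)}-|\Div A|\quad\text{in }\Omega,\qquad \langle A\nabla u,\nu\rangle=|A\nu|\quad\text{on }\partial\Omega,
$$
observes that the source term is Lipschitz on $\overline\Omega$ (which is all that is needed: the Schauder theory for oblique-derivative problems then gives $u\in C^{2,\alpha}(\overline\Omega)$), and in the interior estimate simply writes
$$
\tr(AD^2u)=\div(A\nabla u)-\langle\Div A,\nabla u\rangle=n(\det A)^{1/(n-1)}-|\Div A|-\langle\Div A,\nabla u\rangle\le n(\det A)^{1/(n-1)}
$$
on $\{|\nabla u|<1\}$, by Cauchy--Schwarz. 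There is no vector field $Y$ to construct, and no limit to pass; your $Y$ is a notational proxy for the coefficient $-1$ in front of $|\Div A|$, and the pointwise identity $Y\cdot\Div A=-|\Div A|$ that worries you is never needed as a constraint on a smooth object. If you remove $Y$ and use $-|\Div A|$ directly, your proof becomes the paper's.

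Two smaller remarks. First, by not using convexity you are in fact proving the paper's stronger Theorem~\ref{my improved Serre}, not merely the quoted Theorem~\ref{Serre}; the paper is explicit that dropping convexity is precisely what the ABP method buys over the optimal-transport proof of [Serre]. Second, your rigidity discussion is only a sketch. The paper's argument fills this in carefully: density of the contact set in $\Omega$ gives $D^2u\ge0$ and $\det D^2u=(\det A)^{1/(n-1)}>0$ everywhere, forcing equality in the trace step and hence $\Div A\equiv0$ (which you should deduce, not assume); the inverse function theorem together with $|\nabla u(\Omega)|=|\mathbb B^n|$ then gives $\nabla u(\Omega)=\mathbb B^n$; and saturation of the AM--GM inequality via Lemma~\ref{det(AB) < tr(AB)} gives $AD^2u=\lambda I_n$, from which $A=g\cdot\cof D^2u$ and $\det$ considerations force $g\equiv1$. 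You should spell out these steps; in particular the deduction that $A$ is divergence-free and that $u$ bootstraps to $C^\infty$ once $|\Div A|$ drops out of the equation is not automatic from ``tracking equality.''
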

The original proof of Theorem \ref{Serre} uses an optimal transportation technique. Because of the regularity issue of the transport map, the convexity condition on $\Omega$ is required in the argument (see e.g. \cite{Villainibook}). Our next result in this paper says that Theorem \ref{Serre} still holds when the convexity condition of $\Omega$ is removed. In order to prove it, we use the ABP method with a suitably chosen change-of-variable map instead of the transport map.

\begin{theorem}\label{my improved Serre}
Let $n \in \mathbb N$ and $\Omega$ be a smooth bounded domain in $\mathbb R^n$. If $A$ is a smooth uniformly positive symmetric matrix-valued function on $\overline{\Omega}$, then
\begin{equation}\label{eq:my improved Serre}
n^{\frac{n-1}{n}}\left| \mathbb S^{n-1} \right|^{\frac{1}{n}} \left(\int_\Omega (\det A(x))^{\frac{1}{n-1}} dx\right)^{\frac{n-1}{n}} \leq \int_{\partial \Omega} |A(x) \nu| d\sigma(x) + \int_\Omega |\Div A(x)| dx,
\end{equation}
where $\nu$ is the unit outer normal vector field on $\partial \Omega$. Moreover, the equality in (\ref{eq:my improved Serre}) holds if and only if there is a smooth convex function $u$ on $\Omega$ such that $\nabla u(\Omega)$ is a ball centered at the origin and $A = (\cof D^2u)$, the cofactor matrix of $D^2 u$.
\end{theorem}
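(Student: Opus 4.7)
The plan is to apply the Alexandrov-Bakelman-Pucci (ABP) method, bypassing the regularity of the Brenier transport map that forces the convexity assumption in Serre's original proof. Following Cabr\'e's ABP treatment of the isoperimetric inequality as a template, I will solve a Neumann boundary-value problem adapted to $A$ and extract (\ref{eq:my improved Serre}) from the analysis of the resulting contact set. Write $M:=\int_\Omega(\det A)^{1/(n-1)}\,dx$, $B:=\int_{\partial\Omega}|A\nu|\,d\sigma$, and $D:=\int_\Omega|\Div A|\,dx$, and recall that $|\mathbb S^{n-1}|=n|\mathbb B^n|$, so that (\ref{eq:my improved Serre}) is equivalent to the claim $n|\mathbb B^n|^{1/n}M^{(n-1)/n}\leq B+D$.

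First I solve the (non-divergence form) Neumann problem
\[
\tr(A\,D^2u)=K(\det A)^{1/(n-1)} \text{ in }\Omega,\qquad A\nabla u\cdot\nu=R|A\nu| \text{ on }\partial\Omega,
\]
where $K,R>0$ are constants to be matched by the compatibility condition; existence and smoothness of $u$ follow from standard elliptic theory since $A$ is smooth and uniformly positive. For each $\xi$ with $|\xi|<R$, let $x_\xi\in\overline\Omega$ be a minimizer of $u(x)-\xi\cdot x$. A hypothetical boundary minimizer would satisfy $\nabla u(x_\xi)=\xi-\beta\nu$ with some $\beta\geq 0$, and the boundary condition combined with $A\nu\cdot\nu>0$ would then force $\xi\cdot A\nu\geq R|A\nu|$, which is impossible for $|\xi|<R$ by Cauchy-Schwarz. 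Hence $x_\xi\in\Omega$, with $\nabla u(x_\xi)=\xi$ and $D^2u(x_\xi)\geq 0$, so the map $\nabla u$ sends the contact set $\Gamma:=\{x\in\Omega:|\nabla u(x)|<R,\;D^2u(x)\geq 0\}$ onto $\{\xi:|\xi|<R\}$, and the area formula gives $|\mathbb B^n|R^n\leq\int_\Gamma\det D^2u\,dx$. On $\Gamma$, matrix AM-GM applied to the positive semidefinite $A^{1/2}D^2uA^{1/2}$ yields $n(\det A\,\det D^2u)^{1/n}\leq\tr(A\,D^2u)=K(\det A)^{1/(n-1)}$, whence $\det D^2u\leq(K/n)^n(\det A)^{1/(n-1)}$ on $\Gamma$. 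Integrating gives $|\mathbb B^n|R^n\leq(K/n)^nM$, equivalently $K\geq nR\bigl(|\mathbb B^n|/M\bigr)^{1/n}$.

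It remains to bound $K$ from above by $R(B+D)/M$; combined with the lower bound this will yield the desired inequality. Integrating the PDE over $\Omega$ and applying the divergence theorem gives
\[
KM=\int_\Omega\tr(A\,D^2u)=\int_{\partial\Omega}A\nabla u\cdot\nu-\int_\Omega\nabla u\cdot\Div A=RB-\int_\Omega\nabla u\cdot\Div A,
\]
so the estimate $KM\leq R(B+D)$ reduces to $\int_\Omega\nabla u\cdot\Div A\geq-RD$. This is the step I expect to be the main obstacle: in Serre's optimal-transport proof the bound $|\nabla u|\leq R$ holds throughout $\Omega$ because $\nabla u$ is a Brenier map into the ball of radius $R$, but in the ABP framework such a gradient bound is only a priori guaranteed on the contact set $\Gamma$. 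I expect the fix to come from a barrier/maximum-principle argument on $|\nabla u|^2$ built from the PDE and boundary condition, a judicious retuning of the Neumann data that propagates the bound from $\partial\Omega$ inward, or a replacement of $u$ by an inf-convolution/Moreau envelope that is globally $R$-Lipschitz without disturbing the contact-set analysis. Once this pointwise gradient bound is secured, the equality case is recovered by tracing equality in matrix AM-GM (which forces $AD^2u\propto I$ on $\Gamma$, equivalently $D^2u=(\det A)^{1/(n-1)}A^{-1}$ and hence $A=\cof D^2u$ via the identity $\cof[(\det A)^{1/(n-1)}A^{-1}]=A$) and equality in the boundary Cauchy-Schwarz (which forces $\nabla u$ parallel to $A\nu/|A\nu|$ on $\partial\Omega$, so $\nabla u(\Omega)$ is a ball centered at the origin), matching the statement of the theorem.
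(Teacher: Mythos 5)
You have the right skeleton — ABP via a Neumann problem, contact-set analysis, and matrix AM-GM — but there is a genuine gap, and you have correctly identified exactly where it is: the upper bound on $K$ requires $\int_\Omega \nabla u\cdot\Div A \geq -RD$, i.e.\ a \emph{global} bound $|\nabla u|\leq R$ on $\Omega$, whereas the ABP machinery only controls $\nabla u$ on the contact set. None of the fixes you gesture at are straightforward: a barrier/maximum-principle argument for $|\nabla u|^2$ would require differentiating the equation and hence controlling derivatives of $A$ and of $(\det A)^{1/(n-1)}$; inf-convolution would destroy the PDE; and for a non-convex $\Omega$ there is no reason the oblique-derivative boundary data should propagate a pointwise gradient bound into the interior. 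An additional, smaller, issue: for the \emph{non-divergence} form operator $u\mapsto\tr(A\,D^2u)$, the Fredholm compatibility condition linking $K$ and $R$ is not the simple integral identity you have in mind (it involves the kernel of the adjoint, which is a nontrivial weight), so even the initial normalization is murkier than you suggest.

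The paper sidesteps all of this with a different choice of PDE. It works in \emph{divergence} form and bakes the troublesome term into the right-hand side from the start: after normalizing $A$ so that $n\int_\Omega(\det A)^{1/(n-1)}=\int_{\partial\Omega}|A\nu|+\int_\Omega|\Div A|$, it solves
\[
\div(A\nabla u)=n(\det A)^{1/(n-1)}-|\Div A| \ \text{in}\ \Omega,\qquad
\langle A\nabla u,\nu\rangle=|A\nu|\ \text{on}\ \partial\Omega.
\]
Now on the contact set $V=\{x\in\Omega:|\nabla u|<1,\ D^2u\geq0\}$ one writes
$\tr(A\,D^2u)=\div(A\nabla u)-\langle\Div A,\nabla u\rangle = n(\det A)^{1/(n-1)}-|\Div A|-\langle\Div A,\nabla u\rangle$,
and the last two terms are $\leq 0$ \emph{precisely because} $|\nabla u|<1$ on $V$ — no global gradient estimate is needed, and no integration of the PDE is performed at all. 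The rest (AM-GM, area formula, density of $V$, and the equality analysis forcing $\Div A\equiv 0$, then $A\,D^2u\propto I_n$, then $A=\cof D^2u$ and $\nabla u(\Omega)=\mathbb B^n$) goes through essentially as you describe. So: the key missing idea in your write-up is to put $-|\Div A|$ into the source term of the divergence-form equation so that the pointwise estimate needed on the contact set falls out for free, rather than trying to recover it by integrating the PDE over all of $\Omega$.
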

Note that in the rigidity statement of Theorem \ref{my improved Serre}, we do not assume that $A$ is divergence-free in advance. Taking $A = I_n$, the identity matrix, in Theorem \ref{my improved Serre}, we recover the classical isoperimetric inequality for smooth domains.

The proof of the Theorem is provided in Section \ref{Proof of my improved Serre}. It is inspired by the works of S. Brendle \cite{Brendle3} and X. Cabré \cite{Cabre1, Cabre2}.

\subsection{An inequality related to quermassintegrals}\label{subsec:An inequality related to quermassintegrals}

Our next application of the ABP method is a geometric inequality related to quermassintegrals. For a number $n \in \mathbb N$ and an $(n \times n)$-symmetric matrix $A$, let $\sigma_k (A)$, $1\leq k \leq n$, be the $k$-elementary symmetric functions of its eigenvalues. More specifically, if $\lambda_1, \lambda_2, \dots, \lambda_n$ are the eigenvalues of $A$, then
$$\sigma_k (A) = \sum_{1 \leq i_1 < i_2 < \dots < i_k \leq n} \lambda_{i_1}\lambda_{i_2}\dots\lambda_{i_k}.$$

The normalization of $\sigma_k$ is denoted by $H_k(A) \coloneqq \sigma_k(A) / {n \choose k}$. For $0 \leq k \leq n-1$, the $k$-Newton tensor of $A$ is the symmetric matrix $T_k(A)$ given by
$$T_k(A)_{ij} \coloneqq \frac{\partial \sigma_{k+1}(A)}{\partial A_{ij}}.$$
 Furthermore, the Gårding cone $\Gamma_k$ is defined as the set of all symmetric matrices $A$ with the property that $H_j(A) > 0$ for all $1 \leq j \leq k$.
If $A \in \Gamma_{k+1}$, then by Gårding's theory for hyperbolic polynomials, $T_k(A)$ is positive definite (see \cite{Garding}).

Let $\Sigma$ be a closed hypersurface in $\mathbb R^{n+1}$, and $\nu$ be the unit normal vector field on $\Sigma$ which points to the enclosed domain. We use $\sff_\nu$ to denote the scalar second fundamental form on $\Sigma$ in the direction of $\nu$. With these notations, $\sff_\nu$ is positive semi-definite if $\Sigma$ is the boundary of a convex domain. For simplicity, we will write $H_k$ and $T_k$ instead of $H_k(\sff_\nu)$ and $T_k(\sff_\nu)$, respectively. The hypersurface is said to be $k$-{\it convex} if $\sff_\nu \in \Gamma_k$ at every $x \in \Sigma$. Our next main result in this section is the following:

\begin{theorem}\label{my quermass 2nd}
	Let $n \in \mathbb N$, $n \geq 2$, and $0 \leq k \leq n-2$. Suppose that $\Sigma$ is a closed $(k+1)$-convex hypersurface in $\mathbb R^{n+1}$. Then, we have
	\begin{equation}\label{eq:my quermass 2nd}
	|\mathbb S^n| \left(\int_\Sigma H_k \right)^{n+1} \leq {n-1 \choose k}^n \left(\int_\Sigma H_{k+1} \right)^{n+1} \int_\Sigma \frac{H_k^{n+1}}{(\det T_k)H_{k+1}}.
	\end{equation}
	The equality holds if and only if $\Sigma$ is a sphere.
\end{theorem}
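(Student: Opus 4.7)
Proof proposal. The plan is to run a Brendle-style ABP argument on $\Sigma$, with the Euclidean Laplacian replaced by the weighted operator $\mathrm{div}_\Sigma(T_k\,\nabla^\Sigma\,\cdot\,)$. Since $\Sigma$ is $(k+1)$-convex, both $H_k$ and $H_{k+1}$ are strictly positive, so the number $R_0 := \int_\Sigma H_k \,/\, \int_\Sigma H_{k+1}$ is well defined and positive. On $\Sigma$ I would solve the linear elliptic equation
\[
T_k^{ij}\,\nabla^\Sigma_i\nabla^\Sigma_j u \;=\; c_k\bigl(H_k - R_0\,H_{k+1}\bigr), \qquad c_k := n\binom{n-1}{k}.
\]
The right-hand side has zero mean, and since the Newton tensor of $\sff_\nu$ on a hypersurface of $\mathbb{R}^{n+1}$ is divergence-free, the left-hand side equals $\mathrm{div}_\Sigma(T_k\,\nabla^\Sigma u)$; compatibility is met and $u$ exists (unique up to a constant) by standard Fredholm theory for this uniformly elliptic operator. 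Define $\Phi:\Sigma\times\mathbb{R}\to\mathbb{R}^{n+1}$ by $\Phi(x,t)=\nabla^\Sigma u(x)+t\,\nu(x)$; a computation in a local geodesic orthonormal frame gives $|\det D\Phi(x,t)| = |\det(D^2_\Sigma u(x)-t\,\sff_\nu(x))|$.

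For the ABP step, to each $y\in B_{R_0}(0)\subset\mathbb{R}^{n+1}$ I associate a point $x_0\in\Sigma$ minimizing $x\mapsto u(x)-\langle x,y\rangle$. The first-order condition gives $\nabla^\Sigma u(x_0)=y-\langle y,\nu(x_0)\rangle\nu(x_0)$, so $\Phi(x_0,s_0)=y$ with $s_0:=\langle y,\nu(x_0)\rangle$; the second-order condition gives $D^2_\Sigma u(x_0)\geq s_0\,\sff_\nu(x_0)$; and $|s_0|\leq|y|<R_0$. Hence the contact set
\[
\mathcal{C} := \bigl\{(x,t)\in\Sigma\times[-R_0,R_0]\,:\, D^2_\Sigma u(x)\geq t\,\sff_\nu(x)\bigr\}
\]
satisfies $\Phi(\mathcal{C})\supset B_{R_0}$, and the area formula gives $|B_{R_0}|\leq\int_{\mathcal{C}}|\det D\Phi|$. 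On $\mathcal{C}$, AM-GM applied to the positive semidefinite pair $T_k$ and $D^2_\Sigma u-t\,\sff_\nu$, together with the equation for $u$ and the classical identity $T_k^{ij}\,\sff_{ij}=(k+1)\sigma_{k+1}(\sff_\nu)=c_k H_{k+1}$, yields
\[
\det(D^2_\Sigma u - t\,\sff_\nu) \;\leq\; \frac{\binom{n-1}{k}^{n}\,H_{k+1}^{\,n}}{\det T_k}\bigl(t_*(x)-t\bigr)^n, \qquad t_*(x):=\frac{H_k(x)}{H_{k+1}(x)}-R_0.
\]

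Since $T_k>0$, the contact-set condition forces $t\leq t_*(x)$, while $(k+1)$-convexity ensures $t_*(x)+R_0 = H_k(x)/H_{k+1}(x) > 0$; integrating the pointwise bound in $t$ over $[-R_0, t_*(x)]$ produces the clean value $(H_k(x)/H_{k+1}(x))^{n+1}/(n+1)$. Combining with $|B_{R_0}|=|\mathbb{S}^n|R_0^{\,n+1}/(n+1)$ and the definition of $R_0$ delivers (\ref{eq:my quermass 2nd}) at once. For rigidity, equality in AM-GM forces $D^2_\Sigma u - t\,\sff_\nu$ to be a scalar multiple of $T_k^{-1}$ on $\mathcal{C}$, and tightness of the covering forces $\Phi$ to be a bijection onto $\overline{B_{R_0}}$; these in turn force $\sff_\nu$ to be umbilic, so $\Sigma$ is a round sphere. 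The point of the argument that I expect to require the most care is the precise calibration of the PDE: the constants $c_k$ and $R_0$ must be chosen so that the two endpoints of the one-dimensional $t$-integration combine as $t_*(x)+R_0 = H_k(x)/H_{k+1}(x)$, which is exactly the mechanism that causes the $t$-integral to reproduce the ratio $H_k^{\,n+1}/(H_{k+1}\,\det T_k)$ appearing on the right side of (\ref{eq:my quermass 2nd}); any other normalization leaves behind unwanted factors that cannot be absorbed later.
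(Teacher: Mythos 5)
Your argument is essentially the paper's: same weighted operator $\mathrm{div}_\Sigma(T_k\nabla^\Sigma\,\cdot\,)$, same map $\Phi(x,t)=\nabla^\Sigma u+t\nu$, same contact-set ABP step, and the same AM--GM applied to $T_k$ and $D^2_\Sigma u-t\sff_\nu$. The only cosmetic difference is that you carry the ratio $R_0=\int_\Sigma H_k/\int_\Sigma H_{k+1}$ explicitly rather than rescaling $\Sigma$ so that $R_0=1$; your constants check out, since $c_k=n\binom{n-1}{k}=(n-k)\binom{n}{k}$ and $\tr(T_k\sff_\nu)=(k+1)\sigma_{k+1}(\sff_\nu)=c_kH_{k+1}$, and the $t$-integral over $[-R_0,t_*(x)]$ produces $(H_k/H_{k+1})^{n+1}/(n+1)$ exactly as in the paper.

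There is one genuine gap: nothing in the proposal handles disconnected $\Sigma$, and the PDE step actually fails there. The self-adjoint operator $\mathrm{div}_\Sigma(T_k\nabla^\Sigma\,\cdot\,)$ on a closed but disconnected hypersurface has kernel consisting of functions that are constant on each connected component, so the compatibility condition for solving $\mathrm{div}_\Sigma(T_k\nabla^\Sigma u)=c_k(H_k-R_0H_{k+1})$ is that the right-hand side integrate to zero over \emph{each} component. Your choice of $R_0$ only kills the global integral; on an individual component $\Sigma_j$ the quantity $\int_{\Sigma_j}(H_k-R_0H_{k+1})$ need not vanish, and then $u$ does not exist. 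The paper avoids this by proving the inequality for connected $\Sigma$ and then applying it componentwise, summing via the strict elementary inequality $a_1b_1^{1/(n+1)}+a_2b_2^{1/(n+1)}<(a_1+a_2)(b_1+b_2)^{1/(n+1)}$ for positive $a_i,b_i$; this strictness also shows that equality forces $\Sigma$ to be connected, a fact you implicitly need for your rigidity sketch. Two smaller points worth noting: your contact set as written omits the constraint $|\nabla^\Sigma u|^2+t^2<R_0^2$, which is harmless for the inequality (it only enlarges the set you integrate over and you still overestimate by integrating to $t_*$) but should be stated; and the rigidity step needs the density arguments made explicit — density of the contact slice in $\{-R_0\le t\le t_*(x)\}$, continuity of the inequalities, then fixing $x$ and varying $t$ to get $T_k\sff_\nu=\lambda I_n$, and finally the recursion $T_{k+1}=\sigma_{k+1}I_n-T_k\sff_\nu$ with $k\le n-2$ to conclude umbilicity.
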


When $\Sigma$ is $(k+1)$-convex, the determinant of the $k$-Newton tensor of its second fundamental form satisfies $\det T_k \geq {n-1 \choose k}^n H_{k+1}^{nk/(k+1)}$ (see Section \ref{sec:Proof of my quermass} for more details). Hence, as a consequence of Theorem \ref{my quermass 2nd}, we obtain

\begin{corollary}\label{my quermass result}
	Let $n \in \mathbb N$, $n \geq 2$, and $0\leq k \leq n-2$. Suppose that $\Sigma$ is a closed $(k+1)$-convex hypersurface in $\mathbb R^{n+1}$. Then, we have
	\begin{equation}\label{eq:my quermass 1st}
	|\mathbb S^n| \left(\int_\Sigma H_k \right)^{n+1} \leq \left(\int_\Sigma H_{k+1} \right)^{n+1} \left(\int_\Sigma \frac{H_k^{n+1}}{H_{k+1}^{\frac{nk}{k+1}+1}} \right).
	\end{equation}
The equality holds if and only if $\Sigma$ is a sphere.
\end{corollary}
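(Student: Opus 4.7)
The plan is to read off Corollary \ref{my quermass result} directly from Theorem \ref{my quermass 2nd} by bounding $1/\det T_k$ pointwise using the algebraic inequality that the author highlights just before the statement, namely
\[
\det T_k \geq \binom{n-1}{k}^n H_{k+1}^{nk/(k+1)} \qquad \text{on } \Gamma_{k+1}.
\]
Thus there are really two tasks: (i) establish this pointwise algebraic inequality for symmetric matrices in the Gårding cone $\Gamma_{k+1}$, and (ii) integrate and combine with Theorem \ref{my quermass 2nd}.

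For task (i), I would first diagonalize the second fundamental form. Since $\Sigma$ is $(k+1)$-convex, the eigenvalues $\lambda_1,\dots,\lambda_n$ of $\sff_\nu$ lie in $\Gamma_{k+1}$, and in this basis $T_k$ is diagonal with entries $\sigma_k(\lambda|i)=\partial_{\lambda_i}\sigma_{k+1}(\lambda)=\binom{n-1}{k}H_k(\lambda|i)$, where $\lambda|i$ denotes the $(n-1)$-tuple obtained by deleting $\lambda_i$. Therefore the claim reduces to the symmetric-function inequality
\[
\Bigl(\prod_{i=1}^n H_k(\lambda|i)\Bigr)^{1/n} \geq H_{k+1}(\lambda)^{k/(k+1)}.
\]
The natural route is to use that the cone $\Gamma_{k+1}$ is preserved under deletion of a coordinate in the appropriate sense, so that Maclaurin's inequality $H_k^{1/k}\geq H_{k+1}^{1/(k+1)}$ applies to each sub-tuple $\lambda|i$, and then to combine this with the Gårding concavity of $H_{k+1}^{1/(k+1)}$ to compare $H_{k+1}(\lambda|i)$ with $H_{k+1}(\lambda)$. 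The equality case corresponds to $\lambda_1=\cdots=\lambda_n$, which is consistent with $\Sigma$ being a sphere.

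For task (ii), once the pointwise bound is in hand, dividing and integrating yields
\[
\binom{n-1}{k}^n \int_\Sigma \frac{H_k^{n+1}}{(\det T_k)\,H_{k+1}} \leq \int_\Sigma \frac{H_k^{n+1}}{H_{k+1}^{\frac{nk}{k+1}+1}},
\]
and multiplying by $\bigl(\int_\Sigma H_{k+1}\bigr)^{n+1}$ transforms the right-hand side of \eqref{eq:my quermass 2nd} into exactly the right-hand side of \eqref{eq:my quermass 1st}. Theorem \ref{my quermass 2nd} then closes the argument. For rigidity, equality in \eqref{eq:my quermass 1st} forces equality both in Theorem \ref{my quermass 2nd} and pointwise in the algebraic inequality; the first forces $\Sigma$ to be a round sphere, and on such a sphere all principal curvatures coincide and the algebraic inequality is automatically saturated, so the two equality conditions are compatible. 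Conversely a direct computation on a sphere of arbitrary radius shows that both sides of \eqref{eq:my quermass 1st} agree.

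The main obstacle is step (i): the algebraic inequality is sharp, so one has to track the exact constant $\binom{n-1}{k}^n$ and the exponent $nk/(k+1)$. Although each ingredient (Maclaurin's chain, Gårding concavity, AM--GM over the $n$ deletions) is classical, weaving them together so that the equality case is precisely $\lambda_1=\cdots=\lambda_n$ is delicate; a Lagrange-multiplier argument on the scale-invariant ratio $\prod_i \sigma_k(\lambda|i)/\sigma_{k+1}(\lambda)^{nk/(k+1)}$ restricted to $\Gamma_{k+1}$ provides a clean route to the global minimum and thereby pins down both the constant and the rigidity statement.
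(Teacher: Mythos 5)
Your task (ii) and your description of the equality case are exactly what the paper does: divide the pointwise bound $\det T_k \geq \binom{n-1}{k}^n H_{k+1}^{nk/(k+1)}$ into the right-hand side of \eqref{eq:my quermass 2nd} and integrate, and note that equality in \eqref{eq:my quermass 1st} forces equality in both Theorem \ref{my quermass 2nd} (so $\Sigma$ is a sphere) and the algebraic inequality (so $\sff_\nu$ is a multiple of the identity). The gap is in task (i). Your reduction in the eigenbasis to
\[
\prod_{i=1}^n H_k(\lambda|i) \;\geq\; H_{k+1}(\lambda)^{nk/(k+1)}
\]
is correct, but the proposed route through Maclaurin applied to each $(n-1)$-tuple $\lambda|i$ does not get off the ground: when $\lambda$ lies in $\Gamma_{k+1}$ but not in $\Gamma_{k+2}$, the deleted tuple $\lambda|i$ is in general only in $\Gamma_k$ (in $n-1$ variables), not in $\Gamma_{k+1}$, so the step $H_k(\lambda|i) \geq H_{k+1}(\lambda|i)^{k/(k+1)}$ is unavailable and $H_{k+1}(\lambda|i)$ can even be negative. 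A concrete instance within the scope of the Corollary: $n=3$, $k=1$, $\lambda=(3,3,-1)\in\Gamma_2$, yet $\sigma_2(\lambda|1)=\sigma_2(3,-1)=-3<0$. Your fallback of a Lagrange-multiplier argument on the scale-invariant ratio over $\Gamma_{k+1}$ is only asserted, not carried out, and is not obviously viable since the infimum could a priori escape to the boundary of the open cone $\Gamma_{k+1}$ or to infinity.

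The paper's Lemma \ref{det T_k > H_{k+1}} avoids the deleted tuples altogether. It takes $B=\cof T_k(A)$ in Gårding's inequality (Proposition \ref{Garding inequality for sigma_k}); this is legitimate because $A\in\Gamma_{k+1}$ forces $T_k(A)>0$, hence $\cof T_k(A)>0$, hence $\cof T_k(A)\in\Gamma_n\subset\Gamma_{k+1}$. Since $\tr(T_k(A)\cof T_k(A))=n\det T_k(A)$, this gives
\[
n\det T_k(A)\;\geq\;(k+1)\,\sigma_{k+1}(A)^{k/(k+1)}\,\sigma_{k+1}(\cof T_k(A))^{1/(k+1)},
\]
and then Maclaurin is applied \emph{to the positive-definite matrix $\cof T_k(A)$}, not to $\lambda|i$, to bound $\sigma_{k+1}(\cof T_k(A))$ from below by a power of $\det\cof T_k(A)=(\det T_k(A))^{n-1}$. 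Rearranging yields the claimed constant and exponent, and the equality discussion is clean: Maclaurin forces $\cof T_k(A)=\mu I_n$, hence $T_k(A)=\eta I_n$, hence $A=\lambda I_n$. If you want to salvage the diagonalization approach you would need a separate proof of the deleted-tuple inequality valid on all of $\Gamma_{k+1}$; as written, that step does not follow from the classical Maclaurin chain.
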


{\it Relation of Corollary \ref{my quermass result} to the Alexandrov-Fenchel-type inequalities.} According to the classical Brunn–Minkowski theory of mixed volumes (see e.g. \cite{Schneiderbook}), when $\Sigma$ is $n$-convex (equivalently, it is the boundary of a convex domain), a consequence of the Alexandrov-Fenchel inequality states that
\begin{equation}\label{eq:A-F inequality}
|\mathbb S^n|\left(\int_\Sigma H_k \right)^{n-k-1} \leq \left(\int_\Sigma  H_{k+1} \right)^{n-k} \quad \text{for } 0 \leq k \leq n-2,
\end{equation}
with the equality holds if only if $\Sigma$ is a sphere. It has been an active research direction to generalize (\ref{eq:A-F inequality}) to hold on hypersurfaces with some relaxed curvature conditions. In \cite{GuanLi1}, by the method of curvature flow, P. Guan and J. Li proved  (\ref{eq:A-F inequality}) for $(k+1)$-convex, star-shaped hypersurfaces (see also \cite{GuanLi2} where the methods of geometric flows are discussed). In \cite{ChangWang1}, by the method of optimal transportation, Alice Chang and Y. Wang proved (\ref{eq:A-F inequality}), with non-sharp constant, for $(k+2)$-convex hypersurfaces. In some end cases, they were able to give the sharp constants (see \cite{ChangWang2}).

In general, $(k+1)$-convexity is a natural condition for (\ref{eq:A-F inequality}). A major open problem in this research direction is the following:

\textbf{Open Problem:} Does the inequality (\ref{eq:A-F inequality}) hold under the assumption that $\Sigma$ is $(k+1)$-convex ?

If the Open Problem holds, then Corollary \ref{my quermass result} is its consequence. To see this, we assume that $0 \leq k \leq n-2$ and $\Sigma$ is $(k+1)$-convex. Then, by Maclaurin's inequality, we have
$$H_k \leq H_k^\frac{n+1}{k+2}/H_{k+1}^\frac{nk-k(k+1)}{(k+1)(k+2)}.$$
Consequently, by Holder's inequality, it follows that
\begin{equation}\label{eq:easy Holder}
\int_\Sigma H_k \leq \int_\Sigma \frac{H_k^\frac{n+1}{k+2}}{H_{k+1}^\frac{nk-k(k+1)}{(k+1)(k+2)}} \leq \left( \int_\Sigma H_{k+1} \right)^\frac{k+1}{k+2} \left( \int_\Sigma \frac{H_k^{n+1}}{H_{k+1}^{\frac{nk}{k+1}+1}} \right)^\frac{1}{k+2}.
\end{equation}
Thus, if we \textit{assume} that the \textit{Open Problem has affirmative answer}, then we can combine it with (\ref{eq:easy Holder}) to deduce (\ref{eq:my quermass 1st}).

The proof of Theorem \ref{my quermass 2nd} and more details on the deduction of Corollary \ref{my quermass result} are given in Section \ref{sec:Proof of my quermass}.

\section{Proof of the logarithmic Sobolev inequality for minimal submanifolds of $\mathbb S^{n+m}$}\label{Proof of the logarithmic Sobolev inequality}

\subsection{Proof of (\ref{eq:my logSobolev m = 2}) for closed minimal hypersurfaces of $\mathbb S^{n+1}$}\label{sec:my logSoblev m = 1}
In this subsection, we give a proof of (\ref{eq:my logSobolev m = 2}) in codimension $1$ setting. More precisely, \textit{for $n \in \mathbb N$, let $\Sigma$ be a closed minimal hypersurface of the unit sphere $\mathbb S^{n+1}$ in $\mathbb R^{n+2}$, and $f$ be a positive smooth function on $\Sigma$. We will prove that}
\begin{equation}\label{eq:my logSobolev m = 1}
\int_\Sigma f\left(\log f + \log(|\mathbb S^n|) \right) - \left(\int_\Sigma f \right) \log \left(\int_\Sigma f \right) \leq \frac{n+1}{2n^2} \int_\Sigma \frac{|\nabla^\Sigma f|^2}{f}.
\end{equation}
To do this, we suppose for now that $\Sigma$ is connected. After multiplying $f$ by a (unique) positive constant, we may assume without loss of generality that
\begin{equation}\label{eq:scaling in my logSobolev m=1}
\frac{n}{n+1} \int_\Sigma f(x) \log(f(x)) \; d\vol_\Sigma(x) = \frac{1}{2n} \int_\Sigma \frac{|\nabla^\Sigma f(x)|^2}{f(x)}\; d\vol_\Sigma(x).
\end{equation}
Then, we need to show
\begin{equation}\label{eq:my logSobolev m=1 goal}
|\mathbb S^n| \leq \int_\Sigma f(x)\; d\vol_\Sigma(x).
\end{equation}
Since $f > 0$ and $\Sigma$ is connected, by (\ref{eq:scaling in my logSobolev m=1}), there exists (uniquely modulo a constant) a smooth solution $u$ to the equation
$$\div(f\,\nabla^\Sigma u) = \frac{n}{n+1} f \log f - \frac{1}{2n} \frac{|\nabla^\Sigma f|^2}{f}.$$
Let $\nu$ be a unit normal vector field of $\Sigma$ in $\mathbb S^{n+1}$. We denote $\sff_\nu$ as the scalar second fundamental of $\Sigma$ in the direction of $\nu$. More precisely, this means
$$\sff_\nu(V,W) = \langle \overline{\nabla}_V W, \nu \rangle = -\langle \overline{\nabla}_V \nu, W \rangle  \quad \text{for every } V, W \in T_x\Sigma,$$
where  $\overline{\nabla}$ is the Euclidean connection on $\mathbb R^{n+2}$. We define the sets $V$ and $\Omega$ by
\begin{align*}
&V \coloneqq \{(x,s,t):\; x \in \Sigma, \, s, t \in \mathbb R\, \text{ such that }\\
& \qquad \qquad \qquad |\nabla^\Sigma u(x)|^2 + s^2 + t^2 < 1 \text{ and } D^2_\Sigma u(x) - s\sff_\nu(x) + \;  tg_\Sigma(x) \geq 0\},\\
&\Omega \coloneqq \{x \in \Sigma \, \text{ such that } (x,s,t) \in V \, \text{ for some } s, t \in \mathbb R\},
\end{align*}
where $g_\Sigma$ denotes the induced metric on $\Sigma$. In particular, since $\Sigma$ is a minimal submanifold of $\mathbb S^{n+1}$, we have
\begin{equation}\label{eq:lower bound of Laplacian u + nt m=1}
0 \leq \tr_{g_\Sigma} (D_\Sigma^2 u(x) - s\sff_\nu(x) + \; tg_\Sigma(x)) = \Delta_\Sigma u(x) + nt \quad \text{for } (x,s,t) \in V.
\end{equation}
We also define a map $\Phi : \Sigma \times \mathbb R \times \mathbb R \longrightarrow \mathbb R^{n+2}$ by
$$\Phi(x,s,t) =\nabla^\Sigma u(x) + s\nu + tx \quad \text{for } x \in \Sigma,\; s, t \in \mathbb R.$$

\begin{lemma}\label{Phi(V) in logSobolev m=1}
	We have $\mathbb B^{n+2} = \Phi(V)$.
\end{lemma}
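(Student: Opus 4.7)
The plan is to prove both inclusions separately. The inclusion $\Phi(V) \subseteq \mathbb{B}^{n+2}$ is immediate from Pythagoras: since $|x|=1$, $\nu(x) \perp x$, and both $\nu(x), x$ are perpendicular to $T_x\Sigma$ (the former because $\nu$ is a normal vector of $\Sigma$ inside $\mathbb{S}^{n+1}$, the latter because $\Sigma \subset \mathbb{S}^{n+1}$), the three summands of $\Phi(x,s,t)$ are mutually orthogonal in $\mathbb{R}^{n+2}$. Hence $|\Phi(x,s,t)|^2 = |\nabla^\Sigma u(x)|^2 + s^2 + t^2 < 1$ whenever $(x,s,t) \in V$.

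The content is the reverse inclusion $\mathbb{B}^{n+2} \subseteq \Phi(V)$. Given $\xi \in \mathbb{B}^{n+2}$, the idea is to let $x^\ast \in \Sigma$ be a minimum point of the smooth function $\bar u(x) \coloneqq u(x) - \langle x, \xi \rangle$, which exists because $\Sigma$ is closed. I then set $s \coloneqq \langle \nu(x^\ast), \xi \rangle$ and $t \coloneqq \langle x^\ast, \xi \rangle$, and aim to verify that $(x^\ast, s, t) \in V$ together with $\Phi(x^\ast, s, t) = \xi$.

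The first-order condition $\nabla^\Sigma \bar u(x^\ast) = 0$ gives $\nabla^\Sigma u(x^\ast) = \xi^T$, where $\xi^T$ denotes the orthogonal projection of $\xi$ onto $T_{x^\ast}\Sigma$; using the decomposition $\mathbb{R}^{n+2} = T_{x^\ast}\Sigma \oplus \mathbb{R}\nu(x^\ast) \oplus \mathbb{R} x^\ast$ and the definitions of $s,t$, this yields both $\Phi(x^\ast,s,t) = \xi$ and $|\nabla^\Sigma u(x^\ast)|^2 + s^2 + t^2 = |\xi|^2 < 1$.

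The step that requires the most care is the matrix condition $D^2_\Sigma u(x^\ast) - s\,\sff_\nu(x^\ast) + t\,g_\Sigma(x^\ast) \geq 0$ coming from second-order minimality. For this I would compute the intrinsic Hessian of $x \mapsto \langle x, \xi \rangle$ on $\Sigma$ via the Gauss formula. The key observation is that the full second fundamental form of $\Sigma \subset \mathbb{R}^{n+2}$ decomposes along its two-dimensional normal bundle as $\sff_\nu(V,W)\,\nu - g_\Sigma(V,W)\,x$, where the radial term arises because $\Sigma \subset \mathbb{S}^{n+1}$ (differentiating $\langle W, x \rangle = 0$ in the direction $V \in T_x\Sigma$ gives $\langle \overline{\nabla}_V W, x \rangle = -\langle V, W \rangle$). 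Pairing with $\xi$ gives $D^2_\Sigma \langle \cdot, \xi \rangle = s\,\sff_\nu - t\,g_\Sigma$, so $D^2_\Sigma \bar u(x^\ast) \geq 0$ translates exactly into the required inequality, completing $(x^\ast, s, t) \in V$.
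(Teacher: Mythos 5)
Your proof is correct and follows the same approach as the paper: minimize $v(x) = u(x) - \langle \xi, x\rangle$ on the closed manifold $\Sigma$, read off $s,t$ from the normal components of $\xi$, and use first- and second-order optimality at the minimum point. The only difference is cosmetic -- you spell out the Gauss-formula computation of $D^2_\Sigma\langle\cdot,\xi\rangle = s\,\sff_\nu - t\,g_\Sigma$ (including the radial term coming from $\Sigma\subset\mathbb S^{n+1}$), a step the paper states without detail.
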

\begin{proof}
	It is clear that $\Phi(V) \subseteq \mathbb B^{n+2}$. To see the reverse inclusion, let us pick a point $\xi$ in $\mathbb B^{n+2}$. For every $x\in \Sigma$, we denote $\xi^{\text{tan}}_x$ the image of $\xi$ under the orthogonal projection onto $T_x\Sigma$. In other words,
	$$\langle \xi^{\text{tan}}_x, V \rangle = \langle \xi , V \rangle \quad \text{for every } V \in T_x\Sigma.$$
	We consider the function
	$$v(x) \coloneqq u(x) - \langle \xi, x \rangle \quad \text{for } x \in \Sigma.$$
	Since $\Sigma$ is closed, the function $v$ attains its global minimum at some point $x_0 \in \Sigma$. Then
	$$0 = \nabla^\Sigma v(x_0) = \nabla^\Sigma u(x_0) - \xi^{\text{tan}}_{x_0}.$$
	Hence, $\xi = \nabla^\Sigma u(x_0) + s_0 \nu + t_0 x_0$ where $s_0 = \langle \xi, \nu \rangle$ and $t_0 = \langle \xi, x_0 \rangle$. In particular, since $\nabla^\Sigma u(x_0)$, $x_0$, and $\nu$ are perpendicular to each other, it follows that $|\nabla^\Sigma u(x_0)|^2 + s_0^2 + t_0^2 = |\xi|^2 < 1$. Moreover, we also have
	$$0 \leq D_\Sigma^2 v(x_0) = D_\Sigma^2 u(x_0) - s_0\sff_\nu + \;  t_0 g_\Sigma(x_0).$$
	Therefore, $(x_0, s_0, t_0) \in V$ and $\xi = \Phi(x_0, s_0, t_0)$. Since $\xi$ is arbitrarily chosen, this finishes the proof of the Lemma.
\end{proof}

\begin{lemma}\label{det(D Phi) in logSobolev m=1}
	For every $(x,s,t) \in V$, we have
	\begin{equation}\label{eq:bound of t in logSoblev m=1}
f(x)^{\frac{1}{n+1}} - \sqrt{1 - |\nabla^\Sigma u(x)|^2} + t \geq 0.
	\end{equation}
	Moreover, for every $(x,s,t) \in V$, the Jacobian determinant of $\Phi$ satisfies
	\begin{equation}\label{eq:det(D Phi) in logSobolev m=1}
	|\det D\Phi(x,s,t)| \leq \left( f(x)^{\frac{1}{n+1}} - \sqrt{1 - |\nabla^\Sigma u(x)|^2} + t \right)^n.
	\end{equation}
\end{lemma}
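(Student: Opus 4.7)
The plan is to compute $\det D\Phi$ explicitly via the Gauss formulas for the chain $\Sigma\subset\mathbb S^{n+1}\subset\mathbb R^{n+2}$, bound it using the arithmetic--geometric mean inequality on the positive-semidefinite symmetric endomorphism that appears, and then use the equation defining $u$, via the substitution $\rho=f^{1/(n+1)}$, to identify the bound with $f^{1/(n+1)}-\sqrt{1-|\nabla^\Sigma u|^2}+t$.

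For the Jacobian, I would fix $x\in\Sigma$ together with an orthonormal frame $\{e_1,\dots,e_n\}$ of $T_x\Sigma$, so that $\{e_1,\dots,e_n,\nu(x),x\}$ is an orthonormal basis of $\mathbb R^{n+2}$ at $x$. Using the Gauss formulas for the two immersions $\Sigma\hookrightarrow\mathbb S^{n+1}\hookrightarrow\mathbb R^{n+2}$, the decomposition of $\overline\nabla_V\nabla^\Sigma u$ (with tangential part $\nabla^\Sigma_V\nabla^\Sigma u$, normal-to-$\Sigma$-in-$\mathbb S^{n+1}$ part $\sff_\nu(V,\nabla^\Sigma u)\nu$, and normal-to-$\mathbb S^{n+1}$ part $-\langle V,\nabla^\Sigma u\rangle x$), together with $\overline\nabla_V\nu=-(\sff_\nu V)^\sharp\in T_x\Sigma$, $\overline\nabla_V x=V$, $\partial_s\Phi=\nu$, and $\partial_t\Phi=x$, shows that the matrix of $D\Phi$ in these bases is block lower triangular with upper-left $n\times n$ block equal to the symmetric endomorphism $D^2_\Sigma u-s\sff_\nu+tg_\Sigma$ on $T_x\Sigma$ and lower $2\times 2$ block the identity. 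Hence
\begin{equation*}
\det D\Phi(x,s,t)=\det\bigl(D^2_\Sigma u(x)-s\sff_\nu(x)+tg_\Sigma(x)\bigr).
\end{equation*}
On $V$ this matrix is positive semi-definite, so AM--GM yields
\begin{equation*}
\det\bigl(D^2_\Sigma u-s\sff_\nu+tg_\Sigma\bigr)\leq\left(\frac{\Delta_\Sigma u(x)+nt}{n}\right)^n,
\end{equation*}
where the $s\sff_\nu$ term drops out of the trace by minimality of $\Sigma$ in $\mathbb S^{n+1}$.

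At this stage the whole lemma reduces to the $(s,t)$-independent pointwise inequality
\begin{equation*}
\frac{\Delta_\Sigma u(x)}{n}\leq f(x)^{1/(n+1)}-\sqrt{1-|\nabla^\Sigma u(x)|^2}\qquad\text{on }\Sigma.
\end{equation*}
Indeed, adding $t$ to both sides and combining with the AM--GM bound above yields (\ref{eq:det(D Phi) in logSobolev m=1}), while combining with the non-negativity $(\Delta_\Sigma u+nt)/n\geq 0$ on $V$ guaranteed by (\ref{eq:lower bound of Laplacian u + nt m=1}) yields (\ref{eq:bound of t in logSoblev m=1}).

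To prove this pointwise estimate I would expand the equation defining $u$ as
\begin{equation*}
\Delta_\Sigma u=\frac{n}{n+1}\log f-\frac{\langle\nabla^\Sigma f,\nabla^\Sigma u\rangle}{f}-\frac{1}{2n}\frac{|\nabla^\Sigma f|^2}{f^2}
\end{equation*}
and substitute $\rho:=f^{1/(n+1)}$, so that $\log f/(n+1)=\log\rho$ and $\nabla^\Sigma f/f=(n+1)\nabla^\Sigma\rho/\rho$. Using the elementary bounds $\rho-\log\rho\geq 1$ and $1-\sqrt{1-|\nabla^\Sigma u|^2}\geq\tfrac{1}{2}|\nabla^\Sigma u|^2$ then collapses the claim to
\begin{equation*}
\tfrac{1}{2}\Bigl|\nabla^\Sigma u+\tfrac{n+1}{n}\tfrac{\nabla^\Sigma\rho}{\rho}\Bigr|^2\geq 0,
\end{equation*}
which is automatic. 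The main subtlety is the choice of the exponent $1/(n+1)$: the coefficients $n/(n+1)$ and $1/(2n)$ in the PDE for $u$ are engineered precisely so that the cross and square terms in $\nabla^\Sigma u$ and $\nabla^\Sigma\rho$ assemble into a perfect square after this substitution, which is what makes the change-of-variables argument sharp.
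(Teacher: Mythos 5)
Your proposal is correct and follows essentially the same route as the paper. The Jacobian computation (block structure with upper-left block $D^2_\Sigma u - s\sff_\nu + tg_\Sigma$, lower block $I_2$, hence $\det D\Phi = \det(D^2_\Sigma u - s\sff_\nu + tg_\Sigma)$) and the AM--GM step, with the $s\sff_\nu$ term dropping from the trace by minimality, are exactly what the paper does. The reduction to $\Delta_\Sigma u/n \le f^{1/(n+1)}-\sqrt{1-|\nabla^\Sigma u|^2}$ and its verification via the perfect square $\tfrac12\bigl|\nabla^\Sigma u + \tfrac{n+1}{n}\tfrac{\nabla^\Sigma\rho}{\rho}\bigr|^2 \ge 0$ (with $\rho=f^{1/(n+1)}$, equivalently $\tfrac12\bigl|\nabla^\Sigma u + \tfrac1n\tfrac{\nabla^\Sigma f}{f}\bigr|^2$) together with $\log\rho\le\rho-1$ and $1-\sqrt{1-\theta}\ge\theta/2$ is the same computation the paper carries out, merely completing the square after the change of variable rather than before.
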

\begin{proof}
	To start the proof of this lemma, we fix an arbitrary point $(x,s,t) \in V$. We first show (\ref{eq:bound of t in logSoblev m=1}). By the equation of $u$, we have
	\begin{align*}
	\Delta_\Sigma u(x) &= \frac{n}{n+1} \log f(x) - \frac{1}{2n}\frac{|\nabla^\Sigma f|^2}{f^2} - \left\langle  \frac{\nabla^\Sigma f}{f}, \nabla^\Sigma u \right\rangle.\\
	& = \frac{n}{n+1} \log f(x) + \frac{n}{2} |\nabla^\Sigma u|^2 - \frac{1}{2}\left| \frac{1}{\sqrt{n}}\frac{\nabla^\Sigma f}{f} + \sqrt{n} \nabla^\Sigma u \right|^2\\
	& \leq \frac{n}{n+1} \log f(x) + \frac{n}{2} |\nabla^\Sigma u|^2.
	\end{align*}
	Using the elementary inequalities
	$$\log \lambda \leq \lambda - 1 \text{\; for } \lambda > 0 \quad \text{and} \quad \sqrt{1 - \theta} \leq 1 -\frac{\theta}{2} \text{\; for } 0 \leq \theta \leq 1,$$
	we get
	\begin{equation}\label{eq:my logSobolev elem ineq m=1}
	\frac{1}{n+1} \log f(x) \leq f(x)^{\frac{1}{n+1}} - 1 \quad \text{and} \quad \sqrt{1 - |\nabla^\Sigma u|^2} \leq 1 -\frac{|\nabla^\Sigma u|^2}{2}.
	\end{equation}
	Hence, we continue the chain of estimates for $\Delta_\Sigma u$ by
	\begin{equation}\label{eq:Laplacian u in logSobolev m=1}
	\Delta_\Sigma u(x) \leq n\left( f(x)^{\frac{1}{n+1}} - 1 + \frac{|\nabla^\Sigma u|^2}{2}\right) \leq n\left(f(x)^{\frac{1}{n+1}} - \sqrt{1 - |\nabla^\Sigma u|^2}\right).
	\end{equation}
	Consequently, it follows from (\ref{eq:lower bound of Laplacian u + nt m=1}) and (\ref{eq:Laplacian u in logSobolev}) that
	\begin{equation}\label{eq:bound of t in logSoblev, proof m=1}
	0 \leq \Delta_\Sigma u(x) + nt \leq n\left(f(x)^{\frac{1}{n+1}} - \sqrt{1 - |\nabla^\Sigma u|^2} + t\right).
	\end{equation}
	This verifies (\ref{eq:bound of t in logSoblev m=1}).
	
	To show (\ref{eq:det(D Phi) in logSobolev m=1}), we choose a local orthonomal frame $\{e_1, \dots, e_n\}$ in a neighborhood of $x$ in $\Sigma$. With respect to the frame $\{e_1, \dots, e_n,\nu, x\}$, we compute
	\begin{align*}
	\langle \overline{\nabla}_{e_i} \Phi, e_j \rangle &= \langle \overline{\nabla}_{e_i} \nabla^\Sigma u, e_j \rangle + s\langle \overline{\nabla}_{e_i} \nu, e_j \rangle + t\langle \overline{\nabla}_{e_i} x, e_j \rangle\\
	&= (D^2_\Sigma u)(e_i, e_j) - s\sff_\nu(e_i, e_j) + t\delta_{ij}.
	\end{align*}
	Moreover, since
	$$\frac{\partial}{\partial s} \Phi = \nu \quad \text{and} \quad \frac{\partial}{\partial t} \Phi = x,$$
	we infer that
	$$\left\langle \frac{\partial}{\partial s} \Phi, \, e_j \right\rangle = 0, \quad \left\langle \frac{\partial}{\partial s} \Phi, \,\nu \right\rangle = 1, \quad \left\langle \frac{\partial}{\partial s} \Phi, \, x \right\rangle = 0, $$
	and
	$$\left\langle \frac{\partial}{\partial t} \Phi, \, e_j \right\rangle = 0, \quad \left\langle \frac{\partial}{\partial t} \Phi, \,\nu \right\rangle = 0, \quad \left\langle \frac{\partial}{\partial t} \Phi, \, x \right\rangle = 1.$$
	Therefore, the matrix representing $D\Phi$ has the form
	\begin{equation}\label{eq:matrix of DPhi in logSobolev m=1}
	D\Phi(x,y,t) = \left(\begin{array}{@{}c | c@{}}
	D_\Sigma^2 u(x) - s\sff_\nu + \; tI_n& \quad * \quad \\
	\hline
	0 & I_{2}
	\end{array}\right).
	\end{equation}
	Consequently, since $D_\Sigma^2 u(x) - s\sff_\nu + \; tI_n \geq 0$ as $(x,s,t) \in V$, it follows from (\ref{eq:bound of t in logSoblev, proof m=1}), (\ref{eq:matrix of DPhi in logSobolev m=1}), and the arithmetic-geometric mean inequality that
	\begin{align}
	|\det D\Phi(x,s,t)| &= \det(D_\Sigma^2 u(x) - s\sff_\nu + \;  tI_n) \nonumber \\
	&\leq \left( \frac{\Delta_\Sigma u(x) + nt}{n} \right)^n \nonumber \\
	&\leq \left( f(x)^{\frac{1}{n+1}} - \sqrt{1 - |\nabla^\Sigma u(x)|^2} + t \right)^n.\label{eq:det(DPhi) in logSobolev less than (Laplacian u/n + t)^n m=1}
	\end{align}
	This verifies (\ref{eq:det(D Phi) in logSobolev m=1}) and finishes the proof of the Lemma.
\end{proof}

By (\ref{eq:bound of t in logSoblev m=1}) and the fact that $\Phi(V) = \mathbb B^{n+2}$ from Lemma \ref{Phi(V) in logSobolev m=1}, for any $(x,s,t) \in V$, the range of $t$ is given by
$$\sqrt{1 - |\nabla^\Sigma u(x)|^2} - f(x)^{\frac{1}{n+1}} \leq t \leq \sqrt{1 - |\nabla^\Sigma u(x)|^2}.$$
In the next step, we follow the argument in \cite{Brendle5}. We recall that $|\Phi(x,s,t)|^2 = |\nabla^\Sigma u(x)|^2 + s^2 + t^2$. Combining (\ref{det(D Phi) in logSobolev m=1}), Lemma \ref{Phi(V) in logSobolev m=1}, and Fubini's Theorem, we obtain
\begin{align*}
\pi |\mathbb B^{n+1}| &= \int_{\mathbb B^{n+2}} \frac{1}{\sqrt{1 - |\xi|^2}} d\xi\\
&\leq \int_\Omega \int_{\sqrt{1 - |\nabla^\Sigma u|^2} - f^{\frac{1}{n+1}}}^{\sqrt{1 - |\nabla^\Sigma u|^2}} \int_{-\sqrt{1 - |\nabla^\Sigma u(x)|^2 - t^2}}^{\sqrt{1 - |\nabla^\Sigma u(x)|^2 - t^2}}\\
& \qquad |\det D\Phi(x,s,t)|\frac{1}{\sqrt{1 - |\nabla^\Sigma u(x)|^2 - t^2 - s^2}}\, 1_V(x,s,t)\, ds\, dt\, d\vol_\Sigma(x)\\
&\leq \int_\Omega \int_{\sqrt{1 - |\nabla^\Sigma u|^2} - f^{\frac{1}{n+1}}}^{\sqrt{1 - |\nabla^\Sigma u|^2}} \int_{-\sqrt{1 - |\nabla^\Sigma u(x)|^2 - t^2}}^{\sqrt{1 - |\nabla^\Sigma u(x)|^2 - t^2}}\\
& \quad \left( f(x)^{\frac{1}{n+1}} - \sqrt{1 - |\nabla^\Sigma u|^2} + t \right)^n \frac{1}{\sqrt{1 - |\nabla^\Sigma u(x)|^2 - t^2 - s^2}} ds\, dt\, d\vol_\Sigma(x)\\
&= \pi \int_\Omega \int_{\sqrt{1 - |\nabla^\Sigma u|^2} - f^{\frac{1}{n+1}}}^{\sqrt{1 - |\nabla^\Sigma u|^2}} \left( f(x)^{\frac{1}{n+1}} - \sqrt{1 - |\nabla^\Sigma u|^2} + t \right)^n dt\, d\vol_\Sigma(x)\\
&= \frac{\pi}{n+1} \int_\Omega f(x)\, d\vol_\Sigma(x),
\end{align*}
where we have used the identity  $\int_{-a}^a \frac{1}{\sqrt{a^2 - s^2}}ds = \pi$ in the second last equality of the chain. In view of (\ref{eq:my logSobolev m=1 goal}), the proof of (\ref{eq:my logSobolev m = 2}) for codimension $1$ is complete when $\Sigma$ is connected.

We now suppose that $\Sigma$ is disconnected. Then, we apply (\ref{eq:my logSobolev m = 1}) to each individual of connected components of $\Sigma$, sum over them, and use the elementary inequality
$$a \log a + b\log b < (a+b)\log(a+b) \quad \text{for all } a, b > 0$$
to finish the proof in this case.

\subsection{Proof of Theorem \ref{my logSobolev}}

Let $n, m \in \mathbb N$ and $\Sigma$ be a closed $n$-dimensional submanifold of $\mathbb S^{n+m}$ with the induced metric $g_\Sigma$. We denote $T_x^\perp \Sigma$ the normal space of $\Sigma$ at $x$ with respect to $\mathbb S^{n+m}$ (rather than $\mathbb R^{n+m+1}$). For every $y \in T_x^\perp \Sigma$, we denote $\sff_y$ the scalar second fundamental form of $\Sigma$ at $x$ in the direction of $y$. This means
$$\sff_y(V, W) =  \langle \sff(V, W), y \rangle = \langle \overline{\nabla}_V W, y \rangle \quad \text{for every } V, W \in T_x\Sigma,$$
where  $\overline{\nabla}$ is the Euclidean connection on $\mathbb R^{n+m+1}$ and $\sff(V, W) \coloneqq (\overline{\nabla}_V W)^\perp$ is the second fundamental form on $\Sigma$ with respect to $\mathbb S^{n+m}$. The mean curvature vector $H$ on $\Sigma$ is the average of principle directions of $\sff$. Thus, $H$ is the zero vector field when $\Sigma$ is a minimal submanifold of $\mathbb S^{n+m}$.\\

To prove Theorem \ref{my logSobolev}, we suppose $\Sigma$ is a closed $n$-dimensional minimal submanifold of $\mathbb S^{n+m}$ where $m \geq 2$. Suppose for now that $\Sigma$ is connected. Let $f$ be a positive smooth function on $\Sigma$. After multiplying $f$ by a (unique) positive constant, we may assume without loss of generality that
\begin{equation}\label{eq:scaling in logSobolev}
\frac{n}{n+1} \int_\Sigma f(x) \log(f(x)) \; d\vol_\Sigma(x) = \frac{1}{2n} \int_\Sigma \frac{|\nabla^\Sigma f(x)|^2}{f(x)}\; d\vol_\Sigma(x).
\end{equation}
Then, we need to show
\begin{equation}\label{eq:goal of proof my logSobolev}
(n+1)\frac{|\mathbb S^{n+m}|}{|\mathbb S^{m-1}|} \leq \int_\Sigma f(x)\; d\vol_\Sigma(x).
\end{equation}
Since $f > 0$ and $\Sigma$ is connected, by (\ref{eq:scaling in logSobolev}), there exists (uniquely modulo a constant) a smooth solution $u$ to the equation
$$\div(f\,\nabla^\Sigma u) = \frac{n}{n+1} f \log f - \frac{1}{2n} \frac{|\nabla^\Sigma f|^2}{f}.$$
For each $0 \leq r < 1$, we define
\begin{align*}
&V_r \coloneqq \{(x,y,t):\; x \in \Sigma, \, y \in T_x^\perp \Sigma, \, t \in \mathbb R\, \text{ such that }\\
& \qquad \qquad r^2 \leq  |\nabla^\Sigma u(x)|^2 + |y|^2 + t^2 < 1 \text{ and } D^2_\Sigma u(x) - \sff_y + \;  tg_\Sigma(x) \geq 0\},\\
&\Omega \coloneqq \{x \in \Sigma \, \text{ such that } (x,y,t) \in V_0 \, \text{ for some } y \in T_x^\perp \Sigma \text{ and } t \in \mathbb R\}.
\end{align*}
In particular, since $\Sigma$ is a minimal submanifold of $\mathbb S^{n+m}$, we have
\begin{equation}\label{eq:lower bound of Laplacian u + nt}
	0 \leq \tr_{g_\Sigma} (D_\Sigma^2 u(x) - \sff_y + \; tg_\Sigma(x)) = \Delta_\Sigma u(x) + nt \quad \text{for } (x,y,t) \in V_0.
\end{equation}
We also define a map $\Phi : T^\perp\Sigma \times \mathbb R \longrightarrow \mathbb R^{n+m+1}$ by
$$\Phi(x,y,t) =\nabla^\Sigma u(x) + y + tx \quad \text{for } x \in \Sigma,\; y \in T_x^\perp\Sigma,\; t \in \mathbb R.$$

\begin{lemma}\label{Phi(V_r) in logSobolev}
We have $\{\xi \in \mathbb R^{n+m+1}: r \leq |\xi| < 1 \} = \Phi(V_r)$ for every $0 \leq r < 1$.
\end{lemma}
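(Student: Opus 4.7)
The plan is to mirror the codimension-one argument of Lemma~\ref{Phi(V) in logSobolev m=1}, with the one-dimensional normal direction spanned by $\nu$ replaced by the full $m$-dimensional normal bundle $T^\perp\Sigma$ of $\Sigma$ in $\mathbb{S}^{n+m}$. At each $x \in \Sigma$ there is the orthogonal decomposition $\mathbb{R}^{n+m+1} = T_x\Sigma \oplus T_x^\perp\Sigma \oplus \mathbb{R}x$, and the map $\Phi(x,y,t)=\nabla^\Sigma u(x)+y+tx$ writes a vector using precisely these three orthogonal pieces.

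The forward inclusion $\Phi(V_r)\subseteq\{\xi:r\leq|\xi|<1\}$ is immediate from this decomposition, since $|\Phi(x,y,t)|^2=|\nabla^\Sigma u(x)|^2+|y|^2+t^2$, which by the definition of $V_r$ lies in $[r^2,1)$. For the reverse inclusion, fix $\xi$ with $r\leq|\xi|<1$, and consider the smooth function $v(x)=u(x)-\langle\xi,x\rangle$ on the closed manifold $\Sigma$. Let $x_0$ be a global minimizer of $v$. Set $y_0$ to be the orthogonal projection of $\xi$ onto $T_{x_0}^\perp\Sigma$ and $t_0=\langle\xi,x_0\rangle$. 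The first-order condition $\nabla^\Sigma v(x_0)=0$ gives $\nabla^\Sigma u(x_0)=\xi^{\text{tan}}_{x_0}$, and the orthogonal decomposition then yields both $\xi = \nabla^\Sigma u(x_0) + y_0 + t_0 x_0 = \Phi(x_0, y_0, t_0)$ and $|\xi|^2 = |\nabla^\Sigma u(x_0)|^2 + |y_0|^2 + t_0^2 \in [r^2,1)$.

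The one non-trivial step is to read off the matrix inequality in the definition of $V_r$ from the second-order minimality condition $D^2_\Sigma v(x_0) \geq 0$. For this I would compute the intrinsic Hessian on $\Sigma$ of the linear functional $x\mapsto\langle\xi,x\rangle$. Using the two-step Gauss decomposition $\overline{\nabla}_V W = \nabla^\Sigma_V W + \sff(V,W) - \langle V, W\rangle x$ for $V,W \in T_x\Sigma$ (reflecting the chain $\Sigma \subset \mathbb{S}^{n+m} \subset \mathbb{R}^{n+m+1}$, with the radial term coming from the second fundamental form of $\mathbb{S}^{n+m}$), and splitting $\xi$ into its three orthogonal components at $x_0$, I get
\[
(D^2_\Sigma\langle\xi,\cdot\rangle)(V,W) \;=\; \sff_{y_0}(V,W) \;-\; t_0\, g_\Sigma(V,W).
\]
Hence $D^2_\Sigma v(x_0) = D^2_\Sigma u(x_0) - \sff_{y_0} + t_0\, g_\Sigma(x_0) \geq 0$, which together with the norm bound places $(x_0,y_0,t_0)$ in $V_r$.

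The only real pitfall is keeping straight the two distinct second fundamental forms, of $\Sigma$ inside $\mathbb{S}^{n+m}$ and of $\mathbb{S}^{n+m}$ inside $\mathbb{R}^{n+m+1}$; once the signs are handled correctly, the argument is a clean generalization of the codimension-one proof, and no new ideas beyond those in Lemma~\ref{Phi(V) in logSobolev m=1} are required.
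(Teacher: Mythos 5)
Your proposal is correct and follows essentially the same route as the paper's proof: minimize $v(x)=u(x)-\langle\xi,x\rangle$ on the closed manifold $\Sigma$, read off the first- and second-order conditions at the minimizer, and use the orthogonal decomposition $\mathbb{R}^{n+m+1}=T_{x_0}\Sigma\oplus T_{x_0}^\perp\Sigma\oplus\mathbb{R}x_0$. The only difference is that you spell out the Gauss-decomposition computation of $D^2_\Sigma\langle\xi,\cdot\rangle$, which the paper leaves implicit; your computation is correct.
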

\begin{proof}
It is trivial from the definition of $V_r$ that $\Phi(V_r) \subseteq \mathbb B^{n+m+1}(1) \backslash \mathbb B^{n+m+1}(r)$. To see the reverse inclusion, let us pick a point $\xi$ in $\mathbb R^{n+m+1}$ satisfying $r \leq |\xi| < 1$. For every $x\in \Sigma$, we denote $\xi^{\text{tan}}_x$ the image of $\xi$ under the orthogonal projection onto $T_x\Sigma$. In other words,
$$\langle \xi^{\text{tan}}_x, V \rangle = \langle \xi , V \rangle \quad \text{for every } V \in T_x\Sigma.$$
We consider the function
$$v(x) \coloneqq u(x) - \langle \xi, x \rangle \quad \text{for } x \in \Sigma.$$
Since $\Sigma$ is closed, the function $v$ attains its global minimum at some point $x_0 \in \Sigma$. Then
$$0 = \nabla^\Sigma v(x_0) = \nabla^\Sigma u(x_0) - \xi^{\text{tan}}_{x_0}.$$
Hence, by noting that $x \perp y$ for every $y \in T_x^\perp \Sigma$, we conclude $\xi = \nabla^\Sigma u(x_0) + y_0 + t_0 x_0$ where $t_0 = \langle \xi, x_0 \rangle$ and $y_0$ is some vector in $T_{x_0}^\perp \Sigma$. In particular, since $\nabla^\Sigma u(x_0)$, $x_0$, and $y_0$ are perpendicular to each other, it follows that $|\nabla^\Sigma u(x_0)|^2 + |y_0|^2 + t_0^2 = |\xi|^2$ which is in $[r^2,1)$. Moreover, we also have
$$0 \leq D_\Sigma^2 v(x_0) = D_\Sigma^2 u(x_0) - \sff_{y_0} + \;  t_0 g_\Sigma(x_0).$$
Therefore, $(x_0, y_0, t_0) \in V_r$ and $\xi = \Phi(x_0, y_0, t_0)$. Since $\xi$ is arbitrarily chosen, this finishes the proof of the Lemma.
\end{proof}

\begin{lemma}\label{det(D Phi) in logSobolev}
For every $(x,y,t) \in V_0$, we have
\begin{equation}\label{eq:bound of t in logSoblev}
f(x)^{\frac{1}{n+1}} - \sqrt{1 - |\nabla^\Sigma u(x)|^2} + t \geq 0.
\end{equation}
Moreover, for every $(x,y,t) \in V_0$, the Jacobian determinant of $\Phi$ satisfies
\begin{equation}\label{eq:det(D Phi) in logSobolev}
|\det D\Phi(x,y,t)| \leq \left( f(x)^{\frac{1}{n+1}} - \sqrt{1 - |\nabla^\Sigma u(x)|^2} + t \right)^n.
\end{equation}
\end{lemma}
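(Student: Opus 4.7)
The plan is to parallel the codimension-one argument of Lemma \ref{det(D Phi) in logSobolev m=1}, with the scalar normal coordinate $s$ replaced by a vector $y\in T_x^\perp\Sigma$. Concretely, I will first prove the pointwise bound (\ref{eq:bound of t in logSoblev}) by exactly the same PDE manipulation as in the hypersurface case; only the inequality (\ref{eq:lower bound of Laplacian u + nt}) has to be invoked instead of (\ref{eq:lower bound of Laplacian u + nt m=1}), and minimality enters via $\tr_{g_\Sigma}\sff_y=n\langle H,y\rangle =0$. Then I will compute $D\Phi$ with respect to a carefully chosen frame and show that its matrix is block lower-triangular, reducing the Jacobian to the determinant of the $n\times n$ block $D^2_\Sigma u - \sff_y + tI_n$, to which AM--GM can be applied.

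For (\ref{eq:bound of t in logSoblev}): starting from $\div(f\,\nabla^\Sigma u)=\frac{n}{n+1}f\log f-\frac{1}{2n}|\nabla^\Sigma f|^2/f$ and expanding the divergence, I solve for $\Delta_\Sigma u$ and rewrite the cross-term $\langle\nabla^\Sigma f/f,\nabla^\Sigma u\rangle$ by the identical completion of squares used in the codim-one proof, obtaining
\[
\Delta_\Sigma u(x)\le \frac{n}{n+1}\log f(x)+\frac{n}{2}|\nabla^\Sigma u(x)|^2.
\]
The elementary inequalities $\frac{1}{n+1}\log f\le f^{1/(n+1)}-1$ and $\sqrt{1-|\nabla^\Sigma u|^2}\le 1-\tfrac12|\nabla^\Sigma u|^2$ then upgrade this to $\Delta_\Sigma u(x)\le n\bigl(f(x)^{1/(n+1)}-\sqrt{1-|\nabla^\Sigma u(x)|^2}\bigr)$. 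Combining with (\ref{eq:lower bound of Laplacian u + nt}) gives $0\le \Delta_\Sigma u(x)+nt\le n\bigl(f(x)^{1/(n+1)}-\sqrt{1-|\nabla^\Sigma u(x)|^2}+t\bigr)$, which is (\ref{eq:bound of t in logSoblev}) and which also keeps the intermediate upper bound $\Delta_\Sigma u(x)+nt$ available for the Jacobian step.

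For (\ref{eq:det(D Phi) in logSobolev}), I locally trivialize $T^\perp\Sigma$ by choosing a tangential orthonormal frame $\{e_1,\dots,e_n\}$ around $x$ and an orthonormal frame $\{\nu_1,\dots,\nu_m\}$ of $T^\perp\Sigma$ which is parallel (for the normal connection) at $x$; write $y=\sum_\alpha y^\alpha\nu_\alpha(x)$ so $(x,y^1,\dots,y^m,t)$ are local coordinates on $T^\perp\Sigma\times\mathbb R$. With respect to the orthonormal basis $\{e_1,\dots,e_n,\nu_1,\dots,\nu_m,x\}$ of $\mathbb R^{n+m+1}$, the derivatives $\partial_{y^\alpha}\Phi=\nu_\alpha$ and $\partial_t\Phi=x$ are constant and supply the two lower identity blocks, while $\langle\overline{\nabla}_{e_i}\Phi,e_j\rangle=(D^2_\Sigma u)(e_i,e_j)-\sff_y(e_i,e_j)+t\delta_{ij}$ by the same calculation as in the hypersurface case (the choice of parallel normal frame ensures $\langle\overline{\nabla}_{e_i}\nu_\alpha,\nu_\beta\rangle=0$ at $x$, and $\langle\overline{\nabla}_{e_i}\nu_\alpha,x\rangle=-\langle\nu_\alpha,e_i\rangle=0$). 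Thus $D\Phi$ has the block form
\[
D\Phi(x,y,t)=\left(\begin{array}{@{}c|c@{}}
D_\Sigma^2 u(x)-\sff_y+tI_n & \ast\\[2pt]\hline
0 & I_{m+1}
\end{array}\right),
\]
so $|\det D\Phi(x,y,t)|=\det(D_\Sigma^2 u(x)-\sff_y+tI_n)$. Since this matrix is positive semi-definite on $V_0$ with trace $\Delta_\Sigma u(x)+nt$ (using $\tr_{g_\Sigma}\sff_y=0$), the arithmetic-geometric mean inequality and the chain established above give $|\det D\Phi|\le\bigl((\Delta_\Sigma u+nt)/n\bigr)^n\le\bigl(f^{1/(n+1)}-\sqrt{1-|\nabla^\Sigma u|^2}+t\bigr)^n$, proving (\ref{eq:det(D Phi) in logSobolev}).

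The main technical point—and the only genuine departure from the codimension-one proof—is ensuring the block lower-triangular structure of $D\Phi$; this is where the parallel normal frame is used, and it is also where the minimality hypothesis reappears (through the vanishing of the trace of $\sff_y$) so that the AM--GM step closes with the same upper bound $(\Delta_\Sigma u+nt)/n$.
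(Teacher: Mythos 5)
Your proposal is correct and follows essentially the same approach as the paper's proof: the same PDE manipulation and elementary inequalities to establish \eqref{eq:bound of t in logSoblev}, then a block lower-triangular representation of $D\Phi$ in a tangential-plus-normal frame and AM--GM (via Lemma \ref{det(AB) < tr(AB)} with $A=I_n$, i.e.\ the arithmetic--geometric mean inequality) to get \eqref{eq:det(D Phi) in logSobolev}. Your version is only slightly more explicit in choosing the normal frame parallel at $x$ to make the mixed blocks vanish, which the paper leaves implicit.
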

\begin{proof}
To start the proof of this Lemma, we fix an arbitrary point $(x,y,t) \in V_0$. We first show (\ref{eq:bound of t in logSoblev}). By the equation of $u$, we have
\begin{align*}
\Delta_\Sigma u(x) &= \frac{n}{n+1} \log f(x) - \frac{1}{2n}\frac{|\nabla^\Sigma f|^2}{f^2} - \left\langle  \frac{\nabla^\Sigma f}{f}, \nabla^\Sigma u \right\rangle.\\
& = \frac{n}{n+1} \log f(x) + \frac{n}{2} |\nabla^\Sigma u|^2 - \frac{1}{2}\left| \frac{1}{\sqrt{n}}\frac{\nabla^\Sigma f}{f} + \sqrt{n} \nabla^\Sigma u \right|^2\\
& \leq \frac{n}{n+1} \log f(x) + \frac{n}{2} |\nabla^\Sigma u|^2.
\end{align*}
Using the elementary inequalities
$$\log \lambda \leq \lambda - 1 \text{\; for } \lambda > 0 \quad \text{and} \quad \sqrt{1 - \theta} \leq 1 -\frac{\theta}{2} \text{\; for } 0 \leq \theta \leq 1,$$
we get
\begin{equation}\label{eq:my logSobolev elem ineq}
\frac{1}{n+1} \log f(x) \leq f(x)^{\frac{1}{n+1}} - 1 \quad \text{and} \quad \sqrt{1 - |\nabla^\Sigma u|^2} \leq 1 -\frac{|\nabla^\Sigma u|^2}{2}.
\end{equation}
Hence, we continue the chain of estimates for $\Delta_\Sigma u$ by
\begin{equation}\label{eq:Laplacian u in logSobolev}
\Delta_\Sigma u(x) \leq n\left( f(x)^{\frac{1}{n+1}} - 1 + \frac{|\nabla^\Sigma u|^2}{2}\right) \leq n\left(f(x)^{\frac{1}{n+1}} - \sqrt{1 - |\nabla^\Sigma u|^2}\right).
\end{equation}
Consequently, it follows from (\ref{eq:lower bound of Laplacian u + nt}) and (\ref{eq:Laplacian u in logSobolev}) that
\begin{equation}\label{eq:bound of t in logSoblev, proof}
0 \leq \Delta_\Sigma u(x) + nt \leq n\left(f(x)^{\frac{1}{n+1}} - \sqrt{1 - |\nabla^\Sigma u|^2} + t\right).
\end{equation}
This verifies (\ref{eq:bound of t in logSoblev}).

To show (\ref{eq:det(D Phi) in logSobolev}), we choose a local orthonomal frame $\{e_1, \dots, e_n, \nu_1, \dots, \nu_m\}$ at $(x,y)$ in the total space of the normal bundle $T^\perp \Sigma$ such that $e_i \in T_{x}\Sigma$ and $\nu_\alpha \in T^\perp_{x}\Sigma$. With respect to the frame $\{e_1, \dots, e_n, \nu_1, \dots, \nu_m, x\}$, at the point $(x,y,t)$, we compute
\begin{align*}
\langle \overline{\nabla}_{e_i} \Phi, e_j \rangle &= \langle \overline{\nabla}_{e_i} \nabla^\Sigma u, e_j \rangle + \langle \overline{\nabla}_{e_i} y, e_j \rangle + t\langle \overline{\nabla}_{e_i} x, e_j \rangle\\
&= (D^2_\Sigma u)(e_i, e_j) - \sff_y(e_i, e_j) + t\delta_{ij}.
\end{align*}
Moreover, since
$$\overline{\nabla}_{\nu_\alpha} \Phi = \overline{\nabla}_{\nu_\alpha}y = \nu_\alpha \quad \text{and} \quad \frac{\partial}{\partial t} \Phi = x,$$
we infer that
$$\langle \overline{\nabla}_{\nu_\alpha} \Phi, e_j \rangle =  0, \quad \langle \overline{\nabla}_{\nu_\alpha} \Phi, \nu_\beta\rangle = \delta_{\alpha\beta}, \quad \langle \overline{\nabla}_{\nu_\alpha} \Phi, x \rangle = 0$$
and
$$\left\langle \frac{\partial}{\partial t} \Phi, \, e_j \right\rangle = 0, \quad \left\langle \frac{\partial}{\partial t} \Phi, \,\nu_\beta \right\rangle = 0, \quad \left\langle \frac{\partial}{\partial t} \Phi, \, x \right\rangle = 1.$$
Therefore, the matrix representing $D\Phi$ has the form
\begin{equation}\label{eq:matrix of DPhi in logSobolev}
D\Phi(x,y,t) = \left(\begin{array}{@{}c | c@{}}
D_\Sigma^2 u(x) - \sff_y + \; tI_n& \quad * \quad \\
\hline
0 & I_{m+1}
\end{array}\right).
\end{equation}
Consequently, since $D_\Sigma^2 u(x) - \sff_y + \; tI_n \geq 0$ as $(x,y,t) \in V$, it follows from (\ref{eq:bound of t in logSoblev, proof}), (\ref{eq:matrix of DPhi in logSobolev}), and the arithmetic-geometric mean inequality that
\begin{align}
|\det D\Phi(x,y,t)| &= \det(D_\Sigma^2 u(x) - \sff_y + \;  tI_n) \nonumber \\
&\leq \left( \frac{\Delta_\Sigma u(x) + nt}{n} \right)^n \nonumber \\
&\leq \left( f(x)^{\frac{1}{n+1}} - \sqrt{1 - |\nabla^\Sigma u(x)|^2} + t \right)^n.\label{eq:det(DPhi) in logSobolev less than (Laplacian u/n + t)^n}
\end{align}
This verifies (\ref{eq:det(D Phi) in logSobolev}) and finishes the proof of the Lemma.
\end{proof}

By (\ref{eq:bound of t in logSoblev}) and the fact that $\Phi(V_0) = \mathbb B^{n+m+1}$ from Lemma \ref{Phi(V_r) in logSobolev}, for any $(x,y,t) \in V_0$, the range of $t$ is given by
$$\sqrt{1 - |\nabla^\Sigma u(x)|^2} - f(x)^{\frac{1}{n+1}} \leq t \leq \sqrt{1 - |\nabla^\Sigma u(x)|^2}.$$
In the next step, we follow the argument in \cite{Brendle3}. We begin by fixing any $0 < r < 1$. Then, we combine (\ref{det(D Phi) in logSobolev}), Lemma \ref{Phi(V_r) in logSobolev}, and Fubini's Theorem to obtain
\begin{align*}
|\mathbb B^{n+m+1}|&(1 - r^{n+m+1})\\
&= |\{\xi \in \mathbb R^{n+m+1}: r \leq |\xi| < 1 \}|\\
&\leq \int_\Omega \int_{\sqrt{1 - |\nabla^\Sigma u|^2} - f^{\frac{1}{n+1}}}^{\sqrt{1 - |\nabla^\Sigma u|^2}} \int_{\{y \in T_x^\perp \Sigma:\; r^2 \leq |\nabla^\Sigma u|^2 + t^2 + |y|^2 < 1\}}\\
&\qquad \qquad \qquad \qquad \qquad \qquad |\det D\Phi(x,y,t)|\, 1_{V_r}(x,y,t)\, dy\, dt\, d\vol_\Sigma(x)\\
& \leq \int_\Omega \int_{\sqrt{1 - |\nabla^\Sigma u|^2} - f^{\frac{1}{n+1}}}^{\sqrt{1 - |\nabla^\Sigma u|^2}} \int_{\{y \in T_x^\perp \Sigma:\; r^2 \leq |\nabla^\Sigma u|^2 + t^2 + |y|^2 < 1\}}\\
& \qquad \qquad \left( f(x)^{\frac{1}{n+1}} - \sqrt{1 - |\nabla^\Sigma u(x)|^2} + t \right)^n 1_{V_r}(x,y,t)\, dy\, dt\, d\vol_\Sigma(x).\\
& \leq \int_\Omega \int_{\sqrt{1 - |\nabla^\Sigma u|^2} - f^{\frac{1}{n+1}}}^{\sqrt{1 - |\nabla^\Sigma u|^2}} \int_{\{y \in T_x^\perp \Sigma:\; r^2 \leq |\nabla^\Sigma u|^2 + t^2 + |y|^2 < 1\}}\\
& \qquad \qquad \qquad \qquad \left( f(x)^{\frac{1}{n+1}} - \sqrt{1 - |\nabla^\Sigma u(x)|^2} + t \right)^n dy\, dt\, d\vol_\Sigma(x).
\end{align*}
Moreover, using the elementary inequality $b^\frac{m}{2} - a^\frac{m}{2} \leq \frac{m}{2}(b-a)$ for $0 \leq a \leq b < 1$ and $m \geq 2$, for every $x \in \Omega$, we have
\begin{align*}
|\{y \in &T_x^\perp \Sigma : r^2 \leq |\nabla^\Sigma u|^2 + t^2 + |y|^2 < 1\}|\\
& \leq |\mathbb B^m| \left((1 - |\nabla^\Sigma u|^2 - t^2)^\frac{m}{2}_+ - (r^2 - |\nabla^\Sigma u|^2 - t^2)^\frac{m}{2}_+ \right)\\
& \leq \frac{m}{2} |\mathbb B^m|(1-r^2).
\end{align*}
We therefore continue the previous chain of estimates by
\begin{align*}
|&\mathbb B^{n+m+1}|(1 - r^{n+m+1})\\
& \leq \frac{m}{2} |\mathbb B^m|(1-r^2)\int_\Omega \int_{\sqrt{1 - |\nabla^\Sigma u|^2} - f^{\frac{1}{n+1}}}^{\sqrt{1 - |\nabla^\Sigma u|^2}} \left( f^{\frac{1}{n+1}} - \sqrt{1 - |\nabla^\Sigma u|^2} + t \right)^n dt d\vol_\Sigma(x)\\
& \leq \frac{m}{2(n+1)} |\mathbb B^m|(1-r^2) \int_\Omega f(x)\; d\vol_\Sigma(x).
\end{align*}
Dividing the inequality
$$|\mathbb B^{n+m+1}|(1 - r^{n+m+1}) \leq \frac{m}{2(n+1)} |\mathbb B^m|(1-r^2) \int_\Omega f(x)\; d\vol_\Sigma(x)$$
by $(1-r)$, then sending $r \to 1$, we arrive at
\begin{equation}\label{eq:final equation my logSobolev}
|\mathbb S^{n+m}| \leq \frac{|\mathbb S^{m-1}|}{n+1} \int_\Omega f(x)\; d\vol_\Sigma(x) \leq \frac{|\mathbb S^{m-1}|}{n+1} \int_\Sigma f(x)\; d\vol_\Sigma(x).
\end{equation}
The last inequality coincides with (\ref{eq:goal of proof my logSobolev}). This finishes the proof of the Theorem when $\Sigma$ is connected.

Now, we suppose that $\Sigma$ is disconnected, Then, we apply (\ref{eq:my logSobolev m = 2}) to each individual of connected components of $\Sigma$, sum over them, and use the elementary inequality
$$a \log a + b\log b < (a+b)\log(a+b) \quad \text{for all } a, b > 0$$
to finish the proof in this case. The proof of Theorem \ref{my logSobolev} is complete.

\subsection{Proof of Theorem \ref{my logSoblev m = 2}}
Let $\Sigma$ be a closed $n$-dimensional minimal submanifold of $\mathbb S^{n+2}$ and $f$ be a positive smooth function on $\Sigma$. As pointed out in the introduction, inequality (\ref{eq:my logSobolev m = 2}) is a direct consequence of Theorem \ref{my logSobolev}. We only need to verify the rigidity statement. To get this done, we assume further that $f$ satisfies
\begin{equation}\label{eq:logSobolev rigidity}
\int_\Sigma f\left(\log f + \log(|\mathbb S^n|) \right) - \left(\int_\Sigma f \right) \log \left(\int_\Sigma f \right) = \frac{n+1}{2n^2} \int_\Sigma \frac{|\nabla^\Sigma f|^2}{f}.
\end{equation}
By the last paragraph in the proof of Theorem \ref{my logSobolev}, we conclude that $\Sigma$ is connected. After multiplying $f$ by a (unique) positive constant, we may assume without loss of generality that
\begin{equation}\label{eq:scaling in logSobolev rigidity}
\frac{n}{n+1} \int_\Sigma f(x) \log(f(x)) \; d\vol_\Sigma(x) = \frac{1}{2n} \int_\Sigma \frac{|\nabla^\Sigma f(x)|^2}{f(x)}\; d\vol_\Sigma(x).
\end{equation}
By (\ref{eq:logSobolev rigidity}), this means
\begin{equation}\label{eq:logSobolev rigidity goal}
|\mathbb S^n| = \int_\Sigma f(x) \, d\vol_\Sigma(x).
\end{equation}
Since $f > 0$ and $\Sigma$ is connected, by (\ref{eq:scaling in logSobolev rigidity}), there exists (uniquely modulo a constant) a smooth solution $u$ to the equation
$$\div(f\,\nabla^\Sigma u) = \frac{n}{n+1} f \log f - \frac{1}{2n} \frac{|\nabla^\Sigma f|^2}{f}.$$
We define the sets $\Omega$, $V_r$, and the map $\Phi$ as in the proof of Theorem \ref{my logSobolev}. It follows from (\ref{eq:final equation my logSobolev}) that $\Omega$ is dense in $\Sigma$.
\begin{lemma}\label{det(DPhi) in logSobolev rigidity}
For every $(x,y,t) \in V_0$, we have
$$|\det D\Phi(x,y,t)| = \left( f(x)^{\frac{1}{n+1}} - \sqrt{1 - |\nabla^\Sigma u(x)|^2} + t \right)^n.$$
\end{lemma}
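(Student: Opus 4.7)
The strategy is to use the equality hypothesis to squeeze the chain of inequalities in the proof of Theorem~\ref{my logSobolev} tightly enough that the pointwise Jacobian bound from Lemma~\ref{det(D Phi) in logSobolev} is saturated on all of $V_0$.

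First I would reduce to the tight form of the final inequality. The normalization (\ref{eq:scaling in logSobolev rigidity}) combined with the hypothesis (\ref{eq:logSobolev rigidity}) and the identity $(n+1)|\mathbb S^{n+2}|=2\pi|\mathbb S^n|$ yields (\ref{eq:logSobolev rigidity goal}). Setting $\psi_0(x,y,t):=\bigl(f(x)^{1/(n+1)}-\sqrt{1-|\nabla^\Sigma u(x)|^2}+t\bigr)^n$, the chain from the proof of Theorem~\ref{my logSobolev} (with $m=2$) gives, for each $r\in[0,1)$,
\[
|\mathbb B^{n+3}|(1-r^{n+3}) \leq \int_{V_r}|\det D\Phi| \leq \int_{V_r}\psi_0 \leq \pi(1-r^2)|\mathbb B^{n+1}|.
\]
Dividing by $(1-r)$ and sending $r\to 1^-$ produces an equality by the hypothesis; in turn $\int_\Omega f=\int_\Sigma f$, so $\Omega$ is dense in $\Sigma$.

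Next I would extract rate information. In the equality case a direct computation shows the extreme bounds differ by $\pi(n+1)|\mathbb B^{n+1}|(1-r)^2+O\bigl((1-r)^3\bigr)$ as $r\to 1^-$. Because the three intermediate slacks---the excess multiplicity of $\Phi|_{V_r}$, the Jacobian deficit $\int_{V_r}(\psi_0-|\det D\Phi|)\,dy\,dt\,d\vol_\Sigma$, and the slice-volume deficit---are non-negative and sum to this outer slack, each is itself $O\bigl((1-r)^2\bigr)$ as $r\to 1^-$.

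The last step is to convert the integrated rate for the Jacobian deficit into pointwise equality at every point of $V_0$. Both sides of the claimed identity are continuous in $(x,y,t)$, so it suffices to establish the equality on a dense subset. I would reapply the squeezing analysis at every radius $\rho\in(0,1]$---that is, compare $\Phi$ restricted to $V_0\cap\{|\nabla^\Sigma u|^2+|y|^2+t^2<\rho^2\}$ with the ball of radius $\rho$---to obtain the analogous rate estimate at each $\rho$. Sending the inner radius to $\rho^-$ in each such estimate propagates equality to the level set $\{|\nabla^\Sigma u|^2+|y|^2+t^2=\rho^2\}\cap V_0$, and sweeping $\rho$ over $(0,1]$ fills in all of $V_0$. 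The main obstacle I anticipate is precisely this propagation step: the initial rate estimate controls only the behavior near the outer sphere, and converting it into pointwise equality throughout $V_0$ is the technical heart of the rigidity.
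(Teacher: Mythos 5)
Your plan diverges from the paper's argument in a way that leaves the crucial step unresolved, and you correctly identify where the difficulty lies. The paper does \emph{not} try to extract rate information from the $r\to 1^-$ limit. Instead it argues directly by contradiction: if at some $(x_0,y_0,t_0)\in V_0$ the Jacobian bound from Lemma~\ref{det(D Phi) in logSobolev} were strict, then by continuity one gets $|\det D\Phi|\le(1-\varepsilon)\psi_0$ on $U\cap V_0$ for some fixed neighborhood $U$ of $(x_0,y_0,t_0)$ and some $\varepsilon\in(0,1)$; rerunning the chain of inequalities from the proof of Theorem~\ref{my logSobolev} with the extra factor $(1-\varepsilon 1_U)$ in the integrand, dividing by $(1-r)$, and sending $r\to 1^-$ yields the \emph{strict} inequality $|\mathbb S^n|<\int_\Sigma f$, contradicting the normalization (\ref{eq:logSobolev rigidity goal}). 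So the paper uses localization plus contradiction, whereas you are trying to upgrade a one-sided integrated deficit bound to pointwise equality via a rate estimate, and you never close that upgrade.

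The specific obstruction to your proposed fix is worth naming. You suggest comparing $\Phi$ on $V_0\cap\{|\nabla^\Sigma u|^2+|y|^2+t^2<\rho^2\}$ against $\mathbb B(\rho)$ and sending the inner radius to $\rho^-$. But the tightness of the outer bound in the proof of Theorem~\ref{my logSobolev} depends essentially on the algebraic identity
\[
\int_{\sqrt{1-|\nabla^\Sigma u|^2}-f^{1/(n+1)}}^{\sqrt{1-|\nabla^\Sigma u|^2}}\Bigl(f^{1/(n+1)}-\sqrt{1-|\nabla^\Sigma u|^2}+t\Bigr)^n\,dt=\frac{f(x)}{n+1},
\]
where the upper limit $\sqrt{1-|\nabla^\Sigma u|^2}$ corresponds precisely to the unit sphere. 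When the comparison ball has radius $\rho<1$, the upper $t$-limit becomes $\sqrt{\rho^2-|\nabla^\Sigma u|^2}$ and this integral no longer collapses to a clean multiple of $f(x)$. As a result the restricted chain of inequalities acquires genuine slack for $\rho<1$, the normalization $\int_\Sigma f=|\mathbb S^n|$ no longer forces equality, and no contradiction is extracted. Your rate estimate therefore only controls the deficit on the outer shell $V_r$ as $r\to 1^-$ --- which shrinks to the boundary sphere --- and does not propagate inward. This is exactly the ``technical heart'' you flagged, and it is precisely what the paper's contradiction scheme is designed to sidestep: rather than trying to show the Jacobian deficit vanishes by estimating its integral on inner regions, the paper assumes a strict pointwise deficit and shows it would leave a visible trace in the final inequality even after sending $r\to1^-$.
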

\begin{proof}
Suppose that at some point $(x_0, y_0, t_0) \in V_0$, it holds
$$|\det D\Phi(x_0, y_0 ,t_0)| < \left( f(x_0)^{\frac{1}{n+1}} - \sqrt{1 - |\nabla^\Sigma u(x_0)|^2} + t \right)^n.$$
Then, by continuity and Lemma \ref{det(D Phi) in logSobolev}, there exists an open neighborhood $U$ of $(x_0, y_0, t_0)$ in $T^\perp\Sigma \times \mathbb R$ and a number $0 < \varepsilon < 1$ satisfying
$$|\det D\Phi(x,y,t)| < (1-\varepsilon) \left( f(x)^{\frac{1}{n+1}} - \sqrt{1 - |\nabla^\Sigma u(x)|^2} + t \right)^n \text{ in } U \cap V_0.$$
Let $U_{x_0}$ be image of $U$ under the projection onto $\Sigma$. In other words,
$$U_{x_0} \coloneqq \{x \in \Sigma\, \text{ such that } (x,y,t) \in  U \text{ for some } y \in T_x^\perp \Sigma,\,  t \in \mathbb R\}.$$
Clearly, $U_{x_0}$ contains an open neighborhood of $x_0$ in $\Sigma$. In particular, $|U_{x_0}| > 0$. We continue by fixing any $0 < r < r_0$. By Lemma \ref{det(D Phi) in logSobolev} and the argument in the proof of Theorem \ref{my logSobolev}, we have
\begin{align*}
|&\mathbb B^{n+3}|(1 - r^{n+3})\\
&= |\mathbb B^{n+3}(1) \backslash \mathbb B^{n+3}(r)|\\
& \leq \int_\Omega \int_{\sqrt{1 - |\nabla^\Sigma u|^2} - f^{\frac{1}{n+1}}}^{\sqrt{1 - |\nabla^\Sigma u|^2}} \int_{\{y \in T_x^\perp \Sigma:\; r^2 \leq |\nabla^\Sigma u|^2 + t^2 + |y|^2 < 1\}}\\
& \qquad (1 - \varepsilon 1_U(x,y,t))\left( f(x)^{\frac{1}{n+1}} - \sqrt{1 - |\nabla^\Sigma u(x)|^2} + t \right)^n 1_{V_r}(x,y,t)\, dy\, dt\, d\vol_\Sigma(x)\\
& \leq \int_\Omega \int_{\sqrt{1 - |\nabla^\Sigma u|^2} - f^{\frac{1}{n+1}}}^{\sqrt{1 - |\nabla^\Sigma u|^2}} \int_{\{y \in T_x^\perp \Sigma:\; r^2 \leq |\nabla^\Sigma u|^2 + t^2 + |y|^2 < 1\}}\\
& \qquad \qquad \quad \quad (1 - \varepsilon 1_U(x,y,t))\left( f(x)^{\frac{1}{n+1}} - \sqrt{1 - |\nabla^\Sigma u(x)|^2} + t \right)^n dy\, dt\, d\vol_\Sigma(x)\\
& \leq |\mathbb B^2|(1-r^2)\int_\Omega \int_{\sqrt{1 - |\nabla^\Sigma u|^2} - f^{\frac{1}{n+1}}}^{\sqrt{1 - |\nabla^\Sigma u|^2}}(1 - \varepsilon 1_{U_{x_0}}) \left( f^{\frac{1}{n+1}} - \sqrt{1 - |\nabla^\Sigma u|^2} + t \right)^n\\
&\qquad \qquad \qquad \qquad\qquad \qquad\qquad\qquad\qquad\qquad\qquad\qquad \qquad\qquad \qquad dt\, d\vol_\Sigma(x)\\
& \leq \frac{|\mathbb B^2|(1-r^2)}{n+1} \int_\Omega (1 - \varepsilon 1_{U_{x_0}}(x)) f(x)\; d\vol_\Sigma(x).
\end{align*}
Dividing the above inequalities by $(1-r)$, then passing to limit as $r \to 1$ and recalling the identity $(n+3)|\mathbb B^{n+3}| = 2|\mathbb B^2||\mathbb B^{n+1}| = 2|\mathbb B^2||\mathbb S^n|/(n+1)$, we obtain
\begin{align*}
|\mathbb S^n| \leq \int_\Omega (1 - \varepsilon 1_{U_{x_0}}(x)) f(x)\, d \vol_\Sigma(x) &\leq \int_\Sigma (1 - \varepsilon 1_{U_{x_0}}(x)) f(x)\, d \vol_\Sigma(x)\\
& < \int_\Sigma f(x)\, d \vol_\Sigma(x).
\end{align*}
This contradicts with (\ref{eq:logSobolev rigidity goal}). The proof of the Lemma is complete.
\end{proof}

Combining Lemma \ref{det(D Phi) in logSobolev} and Lemma \ref{det(DPhi) in logSobolev rigidity}, it follows that (\ref{eq:det(DPhi) in logSobolev less than (Laplacian u/n + t)^n}) and (\ref{eq:my logSobolev elem ineq}) must be equalities on $V_0$. The equalities in (\ref{eq:my logSobolev elem ineq}) implies that $f(x) = 1$ and $\nabla u(x) = 0$ for $(x,y,t) \in V_0$. Since $\Omega$ is the projection of $V_0$ to $\Sigma$ and dense in $\Sigma$, we conclude that $f = 1$ and $u$ is a constant on $\Sigma$. Consequently, for every $x \in \Sigma$ and sufficiently small $y \in T_x^\perp \Sigma$, the triplet $(x, y, \frac{1}{2})$ belongs to $V_0$. Moreover, the equality in (\ref{eq:det(DPhi) in logSobolev less than (Laplacian u/n + t)^n}) implies that $\sff_y$ is a multiple of $I_ n$ for every such $y$. As a minimal submanifold with such property, $\Sigma$ is totally geodesic. The proof of Theorem \ref{my logSoblev m = 2} is complete.

\section{Proof of Theorem \ref{my improved Serre}}\label{Proof of my improved Serre}
In this Section, we prove Theorem \ref{my improved Serre}. We divide the proof into two parts. In the first part, we prove the inequality (\ref{eq:my improved Serre}), and in the second part, we prove the rigidity statement.

\subsection{Proof of the inequality (\ref{eq:my improved Serre})}\label{subsec: Proof of the inequality (my improved Serre)}
Let $n \in \mathbb N$ and $\Omega$ be a smooth bounded domain in $\mathbb R^n$. Assume that $A$ is a smooth uniformly positive symmetric matrix-valued function on $\overline{\Omega}$. After multiplying $A$ by a (unique) positive constant, we may assume further that
\begin{equation}\label{eq:scaling in serre}
n \int_\Omega (\det A(x))^{\frac{1}{n-1}} dx = \int_{\partial \Omega} |A(x) \nu| d\sigma(x) + \int_\Omega |\Div A(x)| dx.
\end{equation}
Then we need to show
\begin{equation}\label{eq:mytheoremSerre goal}
| \mathbb B^n| \leq \int_{\Omega}  (\det A(x))^{\frac{1}{n-1}} dx.
\end{equation}
Since $A$ is positive and symmetric, by (\ref{eq:scaling in serre}), there exists (uniquely modulo a constant) a solution $u$ of the equations
\begin{equation*}
\begin{cases*}
\div(A(x) \nabla u) = n (\det A(x))^{\frac{1}{n-1}} - |\Div A(x)| \quad \text{in } \Omega\\
\langle A(x) \nabla u , \nu \rangle = |A(x)\nu| \qquad	\text{on } \partial \Omega.
\end{cases*}
\end{equation*}
 Since $n (\det A)^{1/(n-1)} - |\Div A|$ is Lipschitz continuous in $\overline{\Omega}$ and $A\nu$ is nowhere tangential on $\partial \Omega$, by the standard theory for elliptic equations, $u$ is in the class $C^{2,\alpha}(\overline{\Omega})$ for every $0 < \alpha < 1$ (see \cite{GilbargTrudingerBook}, Section 6.7). Let $V$ denote the set
$$V \coloneqq \{ x \in \Omega\, \text{ such that } |\nabla u(x)| < 1 \text{ and } D^2 u(x) \geq 0 \}.$$

\begin{lemma}\label{nabla(u) in my improved Serre}
We have $\mathbb B^n = \nabla u(V)$.
\end{lemma}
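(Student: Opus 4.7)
The plan is to follow the standard ABP-type argument: the inclusion $\nabla u(V) \subseteq \mathbb B^n$ is immediate since every $x \in V$ satisfies $|\nabla u(x)| < 1$, so the real task is to prove the reverse inclusion $\mathbb B^n \subseteq \nabla u(V)$. Given $\xi \in \mathbb B^n$, I would consider the affine perturbation $v(x) \coloneqq u(x) - \langle \xi, x \rangle$ on the compact set $\overline{\Omega}$. Since $u \in C^{2,\alpha}(\overline{\Omega})$, the function $v$ attains its global minimum at some $x_0 \in \overline{\Omega}$. If I can force $x_0$ to lie in the open set $\Omega$, then the first- and second-order conditions give $\nabla u(x_0) = \xi$ and $D^2 u(x_0) \geq 0$, and combining with $|\nabla u(x_0)| = |\xi| < 1$ places $x_0$ in $V$, completing the proof.

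The entire difficulty is thus to rule out $x_0 \in \partial \Omega$, and this is exactly where the carefully chosen Neumann-type boundary condition $\langle A\,\nabla u, \nu\rangle = |A\nu|$ will enter. Assume for contradiction that $x_0 \in \partial \Omega$. Since the restriction $v|_{\partial \Omega}$ attains a minimum at $x_0$, its tangential gradient vanishes, and therefore $\nabla v(x_0) = \beta\, \nu(x_0)$ for some $\beta \in \mathbb R$. On the other hand, minimality of $x_0$ over $\overline{\Omega}$ at a boundary point forces the outward-normal derivative to be non-positive (probing inward along $-\nu$ can only increase $v$), so $\beta = \langle \nabla v(x_0), \nu \rangle \leq 0$. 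Consequently $\nabla u(x_0) = \xi + \beta\, \nu(x_0)$.

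Substituting this identity into the boundary condition will yield
\begin{equation*}
|A\nu| \;=\; \langle A\,\nabla u(x_0),\, \nu\rangle \;=\; \langle \xi, A\nu\rangle \,+\, \beta\, \langle A\nu, \nu\rangle.
\end{equation*}
Since $A$ is symmetric and uniformly positive, $\langle A\nu, \nu\rangle > 0$; moreover Cauchy--Schwarz combined with $|\xi| < 1$ gives $\langle \xi, A\nu\rangle \leq |\xi|\,|A\nu| < |A\nu|$. Hence $\beta = (|A\nu| - \langle \xi, A\nu\rangle)/\langle A\nu, \nu\rangle > 0$, contradicting $\beta \leq 0$. Therefore $x_0$ must be interior, and the lemma follows.

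The main obstacle, such as it is, reduces to bookkeeping the sign conventions for the boundary derivative and recognizing that the structure of the prescribed boundary condition is \emph{tailor-made} for the Cauchy--Schwarz step above: it is the precise choice that converts the strict inequality $|\xi| < 1$ into a strict sign $\beta > 0$, and is thereby the only reason the argument closes. No further regularity or PDE input is needed beyond the $C^{2,\alpha}$ regularity of $u$ already established from the elliptic theory cited before the lemma.
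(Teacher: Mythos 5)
Your proof is correct and takes essentially the same approach as the paper: both rule out a boundary minimum by combining the Neumann condition $\langle A\nabla u,\nu\rangle = |A\nu|$ with Cauchy--Schwarz and $|\xi|<1$. The only cosmetic difference is that the paper computes $\langle \nabla v(x_0), A(x_0)\nu\rangle$ directly and notes it must be $\leq 0$ at a boundary minimum (since $A\nu$ is an outward-pointing direction), whereas you first decompose $\nabla v(x_0)=\beta\nu$ via the vanishing tangential gradient and solve for $\beta$; the two are equivalent.
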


\begin{proof}
Clearly $\nabla u(V) \subseteq \mathbb B^n$. To see the reverse inclusion, we fix an arbitrary vector $\xi \in \mathbb B^n$. Then we consider the function
$$v(x) \coloneqq u(x) - \langle \xi, x \rangle \quad \text{for } x \in \overline{\Omega}.$$
Let $x_0$ be a minimum point of $v$. If $x_0 \in \partial \Omega$, then, since $A$ is positive symmetric on $\overline{\Omega}$, and $|\xi| < 1$, we would have
\begin{align*}
0 \geq \langle \nabla v(x_0), A(x_0)\nu \rangle &= \langle \nabla u(x_0), A(x_0)\nu \rangle - \langle \xi, A(x_0) \nu \rangle\\
&= |A(x_0)\nu| - \langle \xi, A(x_0) \nu \rangle\\
&> 0.
\end{align*}
Therefore, $x_0 \in \Omega$. Consequently, by the definition of $v$, we obtain
$$ \xi = \nabla u(x_0) \quad \text{and} \quad D^2u(x_0) \geq 0.$$
The proof of the Lemma is complete.
\end{proof}

\begin{lemma}\label{det(D^2u) in my improved Serre}
For every $x \in V$, we have
$$\det(D^2 u(x)) \leq  (\det A(x))^{\frac{1}{n-1}}.$$
\end{lemma}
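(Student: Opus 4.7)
The plan is to combine the PDE satisfied by $u$ with the pointwise constraints defining $V$, then apply an arithmetic--geometric mean inequality for the (non-symmetric) product $A D^2 u$.

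First I would expand the divergence on the left-hand side of the PDE. Since $A$ is symmetric one has
\begin{equation*}
\div(A\nabla u) = \tr(A\, D^2 u) + \langle \Div A,\nabla u\rangle.
\end{equation*}
Substituting the equation satisfied by $u$ and using $|\nabla u(x)| < 1$ on $V$, together with the Cauchy--Schwarz inequality $-\langle \Div A,\nabla u\rangle \leq |\Div A|\,|\nabla u|$, yields
\begin{equation*}
\tr(A(x)\, D^2 u(x)) \leq n(\det A(x))^{\frac{1}{n-1}} + |\Div A(x)|\,(|\nabla u(x)| - 1) \leq n(\det A(x))^{\frac{1}{n-1}}.
\end{equation*}

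Next I would apply the AM--GM inequality to the eigenvalues of $A(x)\,D^2 u(x)$. Although this product is not symmetric in general, since $A(x)$ is symmetric positive definite, we can write $A = S^2$ with $S$ symmetric positive definite; then $A\,D^2 u$ is similar to $S\,D^2 u\,S$, which is symmetric and positive semidefinite on $V$ because $D^2 u(x) \geq 0$. Hence the eigenvalues of $A\,D^2 u$ are nonnegative reals, and AM--GM gives
\begin{equation*}
\tr(A\,D^2 u) \geq n\,\bigl(\det(A\,D^2 u)\bigr)^{1/n} = n\,(\det A)^{1/n}\,(\det D^2 u)^{1/n}.
\end{equation*}

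Combining the two displayed inequalities yields $(\det A)^{1/n}(\det D^2 u)^{1/n} \leq (\det A)^{\frac{1}{n-1}}$, and rearranging the exponents gives exactly $\det(D^2 u(x)) \leq (\det A(x))^{\frac{1}{n-1}}$, as desired. The only subtlety is the AM--GM step for $A\,D^2 u$; the similarity argument through $S = A^{1/2}$ resolves it cleanly.
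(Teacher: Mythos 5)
Your proof is correct and follows essentially the same route as the paper: expand the divergence using symmetry of $A$, bound $\tr(A\,D^2u)$ via the PDE and $|\nabla u|<1$, and then apply AM--GM to the eigenvalues of $A\,D^2u$. The paper invokes its Appendix Lemma~\ref{det(AB) < tr(AB)} for the final step, whereas you prove the same fact inline by the $A=S^2$ similarity argument, but the two are the same idea.
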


\begin{proof}
We begin the proof by fixing a point $x \in V$. Note that
$$\div(A(x)\nabla u(x) = \tr(A(x)D^2u(x)) + \langle \Div A(x), \nabla u(x) \rangle.$$
Hence, by the equation of $u$, it follows that
\begin{align}
\tr(A(x)D^2u(x)) &= \div(A(x)\nabla u(x))  - \langle \Div A(x), \nabla u(x) \rangle \nonumber \\
&= n (\det A(x))^{\frac{1}{n-1}} - |\Div A(x)| - \langle \Div A(x), \nabla u(x) \rangle \nonumber \\
&\leq n (\det A(x))^{\frac{1}{n-1}}, \label{eq:tr(A D^2u) in my improved Serre}
\end{align}
where we have used the fact that $|\nabla u(x)| < 1$. Since $D^2 u(x) \geq 0$, we apply Lemma \ref{det(AB) < tr(AB)} to the pair $A(x)$, $D^2u(x)$ and use (\ref{eq:tr(A D^2u) in my improved Serre}) to obtain
\begin{align}
\det(D^2 u(x)) &=\frac{1}{\det A(x)} \det(A(x) D^2 u(x))\nonumber\\
&\leq\frac{1}{\det A(x)} \left(\frac{\tr (A(x) D^2 u(x))}{n} \right)^{n}\nonumber\\
&\leq (\det A(x))^{\frac{1}{n-1}}.\label{eq:det D^2 u in proof of my improved Serre}
\end{align}
This finishes the proof of the Lemma
\end{proof}

Applying Lemma \ref{nabla(u) in my improved Serre}, Lemma \ref{det(D^2u) in my improved Serre} and the area formula, we conclude that
\begin{align}
|\mathbb B^n| = |\nabla u(V)| \leq \int_V \det(D^2 u(x))dx &\leq \int_V (\det A(x))^{\frac{1}{n-1}}dx \nonumber \\
&\leq \int_\Omega (\det A(x))^{\frac{1}{n-1}}dx. \label{eq:last ineq in my improved Serre}
\end{align}
This coincides with (\ref{eq:mytheoremSerre goal}). The proof of (\ref{eq:my improved Serre}) is complete.

\subsection{Proof of the rigidity statement of Theorem \ref{my improved Serre}}

Let $n \in \mathbb N$ and $\Omega$ be a smooth bounded domain in $\mathbb R^n$. Assume that $A$ is a smooth uniformly positive symmetric matrix-valued function on $\overline{\Omega}$ such that the equality holds in (\ref{eq:my improved Serre}). After multiplying $A$ by a (unique) positive constant, we may assume further that
\begin{equation}\label{eq:scaling in serre rigidity}
n \int_\Omega (\det A(x))^{\frac{1}{n-1}} dx = \int_{\partial \Omega} |A(x) \nu| d\sigma(x) + \int_\Omega |\Div A(x)| dx.
\end{equation}
Equivalently, this means
\begin{equation}\label{eq:mytheoremSerre goal rigidity}
| \mathbb B^n| = \int_{\Omega}  (\det A(x))^{\frac{1}{n-1}} dx.
\end{equation}
Let $u$ and $V$ be the function and the set as in Section \ref{subsec: Proof of the inequality (my improved Serre)}. Then, it follows immediately from (\ref{eq:last ineq in my improved Serre}) that $V$ is a dense subset of $\Omega$. In particular, by continuity, we have that $D^2u \geq 0$ and $|\nabla u| \leq 1$ on $\Omega$.

\begin{lemma}\label{det(D^2u) in my improved Serre rigidity}
	For every $x \in V$, we have
	$$\det(D^2 u(x)) =  (\det A(x))^{\frac{1}{n-1}}.$$
\end{lemma}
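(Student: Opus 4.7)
The plan is to revisit the chain of estimates (\ref{eq:last ineq in my improved Serre}) under the equality hypothesis (\ref{eq:mytheoremSerre goal rigidity}), and then upgrade the resulting almost-everywhere equality on $V$ to a pointwise equality by continuity. Under (\ref{eq:mytheoremSerre goal rigidity}), every inequality in (\ref{eq:last ineq in my improved Serre}) must be an equality. The last one, namely $\int_V (\det A)^{\frac{1}{n-1}} = \int_\Omega (\det A)^{\frac{1}{n-1}}$, combined with the hypothesis that $\det A > 0$ on $\overline{\Omega}$, forces $V$ to have \emph{full} Lebesgue measure in $\Omega$, which is strictly stronger than the density property already recorded before the statement of the Lemma. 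In particular, every open subset $U \subseteq \Omega$ satisfies $|V \cap U| = |U|$.

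With this strengthened observation in hand, I would argue by contradiction. Suppose there exists $x_0 \in V$ at which $\det(D^2 u(x_0)) < (\det A(x_0))^{\frac{1}{n-1}}$. Since $u \in C^{2,\alpha}(\overline{\Omega})$ and $A$ is smooth on $\overline{\Omega}$, the function $x \mapsto (\det A(x))^{\frac{1}{n-1}} - \det(D^2 u(x))$ is continuous on $\overline{\Omega}$. Hence there exists an open neighborhood $U \subseteq \Omega$ of $x_0$ and a constant $\varepsilon > 0$ such that
$$\det(D^2 u(x)) \leq (\det A(x))^{\frac{1}{n-1}} - \varepsilon \qquad \text{for all } x \in U.$$
Combining this strengthened bound on $V \cap U$ with Lemma \ref{det(D^2u) in my improved Serre} on $V \setminus U$ gives
$$\int_V \det(D^2 u(x))\,dx \leq \int_V (\det A(x))^{\frac{1}{n-1}}\,dx - \varepsilon\,|V \cap U|.$$
Inserting this into (\ref{eq:last ineq in my improved Serre}) yields $|\mathbb B^n| \leq |\mathbb B^n| - \varepsilon\,|V \cap U|$, and the full-measure observation gives $|V \cap U| = |U| > 0$, producing the desired contradiction.

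The only substantive step beyond what was already used in Section \ref{subsec: Proof of the inequality (my improved Serre)} is the promotion of the density of $V$ in $\Omega$ to the \emph{full-measure} property of $V$ in $\Omega$. Once this is noted, the continuity-plus-margin contradiction argument is routine, since $D^2 u$ is continuous thanks to the $C^{2,\alpha}$ regularity of $u$ and $(\det A)^{\frac{1}{n-1}}$ is smooth.
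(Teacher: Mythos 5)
Your proof is correct, and the overall structure (contradiction via continuity and the equality case of the chain (\ref{eq:last ineq in my improved Serre})) mirrors the paper's. The one genuine difference is technical: you first promote density of $V$ in $\Omega$ to the statement that $V$ has full Lebesgue measure (using $\det A>0$ and the equality $\int_V(\det A)^{1/(n-1)}=\int_\Omega(\det A)^{1/(n-1)}$), and then use an additive margin $\det(D^2u)\le(\det A)^{1/(n-1)}-\varepsilon$ on $U$, so that you need $|V\cap U|>0$. The paper instead uses a multiplicative margin $\det(D^2u)<(1-\varepsilon)(\det A)^{1/(n-1)}$ on $U\cap V$, which allows it to bound $\int_V(1-\varepsilon 1_U)(\det A)^{1/(n-1)}\le\int_\Omega(1-\varepsilon 1_U)(\det A)^{1/(n-1)}$ simply by nonnegativity of the integrand; it then concludes from $|U|>0$ and $\det A>0$ alone, with no need to know the measure of $V\cap U$. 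Both arguments are valid; yours requires one extra observation (full measure of $V$), whereas the paper's phrasing avoids that step, but the underlying idea is the same.
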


\begin{proof}
In view of Lemma \ref{det(D^2u) in my improved Serre}, suppose, on the contrary, that there exists a point $x_0 \in V$ satisfying
$$\det(D^2 u(x_0)) < (\det A(x_0))^{\frac{1}{n-1}}.$$
By continuity, there is an open neighborhood $U$ of $x_0$ and a number $0 < \varepsilon < 1$ such that
$$\det(D^2 u(x)) < (1 - \varepsilon) (\det A(x)))^{\frac{1}{n-1}} \quad \text{for } x \in U \cap V.$$
Putting Lemma \ref{nabla(u) in my improved Serre}, Lemma \ref{det(D^2u) in my improved Serre} and this together, we deduce that
\begin{align*}
|\mathbb B^n| \leq |\nabla u(V)| &\leq \int_V \det(D^2 u(x))dx\\
&\leq \int_V (1 - \varepsilon 1_U(x))(\det A(x))^{\frac{1}{n-1}}dx\\
&\leq \int_\Omega (1 - \varepsilon 1_U(x))(\det A(x))^{\frac{1}{n-1}}dx \\
&< \int_\Omega (\det A(x))^{\frac{1}{n-1}}dx,
\end{align*}
which contradicts with (\ref{eq:mytheoremSerre goal rigidity}). The proof of the Lemma is complete.
\end{proof}

Since $V$ is dense in $\Omega$, Lemma \ref{det(D^2u) in my improved Serre rigidity} in particular gives
\begin{equation}\label{eq: D^2u is invertible in my Serre rigidity}
\det(D^2u(x)) = (\det A(x))^{\frac{1}{n-1}} > 0 \quad \text{for } x \in \overline{\Omega}.
\end{equation}
Therefore, recalling Lemma \ref{nabla(u) in my improved Serre} again, we deduce that
$$|\mathbb B^n| \leq |\nabla u(\Omega)| \leq \int_\Omega \det(D^2u(x)) dx = \int_\Omega (\det A(x))^{\frac{1}{n-1}} dx,$$
which in turn implies $|\mathbb B^n| =  |\nabla u(\Omega)|$ by (\ref{eq:mytheoremSerre goal rigidity}).

\begin{lemma}\label{A and u in my improved Serre rigidity}
The tensor $A$ is devergence-free in $\Omega$. Moreover, $u$ is smooth in $\Omega$ and $\nabla u(\Omega) = \mathbb B^n$.
\end{lemma}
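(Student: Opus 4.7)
The plan is to extract pointwise identities on $V$ by examining the equality cases in the proof of Lemma~\ref{det(D^2u) in my improved Serre}, and then to propagate them to all of $\Omega$ using the density of $V$ in $\Omega$ observed just above. First, fixing $x \in V$ and invoking Lemma~\ref{det(D^2u) in my improved Serre rigidity}, every inequality in the proof of Lemma~\ref{det(D^2u) in my improved Serre} becomes an equality at $x$. Equality in the AM--GM step $\det(A\,D^2u) \leq (\tr(A\,D^2u)/n)^n$ will force $A(x)^{1/2}\,D^2u(x)\,A(x)^{1/2}$ to be a positive scalar multiple of the identity, which combined with $\det(D^2u(x)) = (\det A(x))^{1/(n-1)}$ yields
\[
D^2u(x) \,=\, (\det A(x))^{\frac{1}{n-1}}\,A(x)^{-1}.
\]
Equality in the remaining step, together with the identity $\tr(A\,D^2u) = n(\det A)^{1/(n-1)} - |\Div A| - \langle \Div A, \nabla u\rangle$ derived there, will force $|\Div A(x)| + \langle \Div A(x),\nabla u(x)\rangle = 0$. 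Cauchy--Schwarz will bound the left-hand side below by $|\Div A(x)|\bigl(1-|\nabla u(x)|\bigr)$, and since $|\nabla u(x)|<1$ on $V$ this forces $\Div A(x) = 0$.

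Since $V$ is dense in $\Omega$ and both $\Div A$ and the matrix field $D^2u - (\det A)^{\frac{1}{n-1}}A^{-1}$ are continuous on $\overline{\Omega}$, both identities extend to all of $\Omega$. In particular $\Div A \equiv 0$ on $\Omega$, which is the first claim. Moreover, $(\det A)^{\frac{1}{n-1}}A^{-1}$ is smooth and uniformly positive definite on $\overline{\Omega}$, so the identity $D^2u = (\det A)^{\frac{1}{n-1}} A^{-1}$ upgrades $u$ to a $C^\infty$ and strictly convex function on $\Omega$ with $D^2u > 0$ pointwise; this gives the second claim.

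For the third claim, Lemma~\ref{nabla(u) in my improved Serre} already gives $\mathbb B^n = \nabla u(V) \subseteq \nabla u(\Omega)$, while $|\nabla u| \leq 1$ on $\overline{\Omega}$ gives $\nabla u(\Omega) \subseteq \overline{\mathbb B^n}$. To upgrade to $\nabla u(\Omega) = \mathbb B^n$ it suffices to show the strict inequality $|\nabla u(x)| < 1$ for every $x \in \Omega$. I will do this by a maximum-principle argument on $|\nabla u|^2$: at any interior maximum $x_0 \in \Omega$ one would have $D^2u(x_0)\,\nabla u(x_0) = \tfrac12 \nabla(|\nabla u|^2)(x_0) = 0$, which by $D^2u(x_0) > 0$ forces $\nabla u(x_0) = 0$, contradicting $\sup_{\overline{\Omega}} |\nabla u| \geq 1$ (which holds because $\nabla u(V) = \mathbb B^n$). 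Hence the maximum is attained only on $\partial\Omega$, and $|\nabla u| < 1$ strictly in $\Omega$, completing the proof.

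The main obstacle is the very last maximum-principle step, since it is the only point in the argument that requires analysis beyond the direct tracing of equality cases. It rests crucially on the strict positivity $D^2u > 0$ from the second paragraph, which in turn depends on first having upgraded $u$ from $C^{2,\alpha}$ to $C^\infty$ via the identity $D^2u = (\det A)^{1/(n-1)}A^{-1}$; all other conclusions are straightforward consequences of the density of $V$ and the continuity of the relevant quantities.
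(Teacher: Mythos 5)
Your proof is correct. The first part (extracting both equality cases at points of $V$ and propagating by density) is the same idea as the paper's, but you organize it differently, and your final step is genuinely different. Where the paper only uses the equality in the inequality for $\tr(A\,D^2u)$ to conclude $\Div A = 0$ and then deduces smoothness of $u$ from standard elliptic regularity (now that the equation $\div(A\nabla u) = n(\det A)^{1/(n-1)}$ has a smooth right-hand side), you instead also unpack the equality in the AM--GM step to get $D^2u = (\det A)^{1/(n-1)}A^{-1}$ already at this stage; smoothness of $u$ then follows trivially from the smoothness of the right side, without invoking Schauder theory. (This also means you are essentially proving the content of Lemma~\ref{A = cof(D^2 u) in my Serre rigidity proof} in passing.) For the claim $\nabla u(\Omega) = \mathbb B^n$, the paper observes that $\nabla u(\Omega)$ is an \emph{open} set (inverse function theorem, since $D^2u$ is invertible) containing $\mathbb B^n$ and of equal Lebesgue measure, hence must coincide with $\mathbb B^n$; you instead prove the strict bound $|\nabla u| < 1$ on $\Omega$ by a maximum-principle argument on $|\nabla u|^2$, using $D^2u > 0$ to rule out an interior maximum. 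Both routes are valid. Your smoothness argument is arguably cleaner; the two arguments for $\nabla u(\Omega) = \mathbb B^n$ are of comparable weight, with the paper's relying on a soft measure-theoretic observation and yours on a pointwise gradient estimate.
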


\begin{proof}
Combining (\ref{eq: D^2u is invertible in my Serre rigidity}) and Lemma \ref{det(D^2u) in my improved Serre}, we infer that (\ref{eq:tr(A D^2u) in my improved Serre}) must be an equality. Since $|\nabla u| < 1$ in $V$, it follows that $\Div A = 0$ in $V$.  Because of density of $V$ in $\Omega$, we conclude that $A$ is divergence-free in $\Omega$. Hence, the equation of $u$ becomes
\begin{equation*}
\div(A(x) \nabla u) = n (\det A(x))^{\frac{1}{n-1}}\quad \text{in } \Omega.
\end{equation*}
Now, with the smooth right-hand side, by the standard theory for elliptic equations, $u$ is smooth in $\Omega$. Furthermore, by (\ref{eq: D^2u is invertible in my Serre rigidity}) and the inverse function theorem, $\nabla u(\Omega)$ is an open subset of $\mathbb R^n$. Since $\mathbb B^n \subseteq \nabla u(\Omega)$ and $|\mathbb B^n| =  |\nabla u(\Omega)|$, we conclude that $\mathbb B^n = \nabla u(\Omega)$. The proof of the Lemma is complete.
\end{proof}

\begin{lemma}\label{A = cof(D^2 u) in my Serre rigidity proof}
$A(x) = (\cof D^2 u(x))$, the cofactor matrix of $D^2u(x)$, for $x \in \Omega$.
\end{lemma}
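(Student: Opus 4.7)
The plan is to extract the equality case of the arithmetic–geometric mean inequality that was used in the proof of Lemma \ref{det(D^2u) in my improved Serre}, and then convert the resulting algebraic identity into the cofactor formula. By the previous lemmas we already know that $u$ is smooth on $\Omega$, that $D^2 u(x) \geq 0$ with $\det D^2 u(x) = (\det A(x))^{1/(n-1)} > 0$ on all of $\overline{\Omega}$ (so $D^2u$ is in fact positive definite and invertible everywhere), that $A$ is divergence-free on $\Omega$, and that $\nabla u(\Omega) = \mathbb B^n$.

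First I would revisit the chain of inequalities
\[
\det(D^2u(x)) = \frac{1}{\det A(x)}\det(A(x)D^2u(x)) \leq \frac{1}{\det A(x)}\left(\frac{\tr(A(x)D^2u(x))}{n}\right)^n \leq (\det A(x))^{\frac{1}{n-1}}
\]
used in the proof of Lemma \ref{det(D^2u) in my improved Serre}. By Lemma \ref{det(D^2u) in my improved Serre rigidity}, both inequalities are actually equalities on $V$. The first equality is the equality case of Lemma \ref{det(AB) < tr(AB)}, and since $A(x)$ is positive and $D^2u(x)$ is positive semidefinite with positive determinant, it forces $A(x)D^2u(x)$ to be a scalar multiple of the identity. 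The second equality, combined with the trace identity $\tr(A(x)D^2u(x)) = n(\det A(x))^{1/(n-1)}$ that follows from the (now divergence-free) equation of $u$ and $\Div A = 0$, identifies the scalar: $A(x)D^2u(x) = (\det A(x))^{1/(n-1)}\,I_n$ for every $x\in V$.

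Since $V$ is dense in $\Omega$ and all quantities are continuous, this identity extends to all $x \in \Omega$. Because $D^2u(x)$ is invertible on $\Omega$, I can rearrange to get
\[
A(x) = (\det A(x))^{\frac{1}{n-1}}\,(D^2 u(x))^{-1} = \det(D^2 u(x))\,(D^2 u(x))^{-1},
\]
where the second equality uses the identity $\det(D^2 u(x)) = (\det A(x))^{1/(n-1)}$ from Lemma \ref{det(D^2u) in my improved Serre rigidity}. The right-hand side is precisely the cofactor formula $\cof(D^2u(x))$ for an invertible symmetric matrix, which completes the proof.

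I do not foresee a serious obstacle here: the hard work was already done in the preceding lemmas (upgrading the strict inequality in the AM–GM step to an equality on a dense set, and showing $\Div A = 0$ so that the trace equation holds without the lower-order term). The only point requiring some care is to make sure that equality in Lemma \ref{det(AB) < tr(AB)} really does yield that $A D^2 u$ is a scalar multiple of the identity in this setting — this uses positivity of $A$ and positive definiteness of $D^2 u$, both of which are already available.
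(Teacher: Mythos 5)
Your proposal is correct and takes essentially the same route as the paper: both extract $A(x)D^2u(x) = \lambda(x) I_n$ from the equality case of Lemma \ref{det(AB) < tr(AB)} on the dense set $V$, extend by continuity, and then identify the scalar using the identity $\det D^2u = (\det A)^{1/(n-1)}$. The only difference is cosmetic — you pin down the scalar directly from the trace equation $\tr(A D^2u) = n(\det A)^{1/(n-1)}$ and then use $\cof M = (\det M)M^{-1}$, while the paper first writes $A = g\cof(D^2u)$ and deduces $g\equiv 1$ by comparing determinants.
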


\begin{proof}
According to (\ref{eq: D^2u is invertible in my Serre rigidity}) and Lemma \ref{det(D^2u) in my improved Serre}, we deduce that (\ref{eq:det D^2 u in proof of my improved Serre}) must be a chain of equalities. In particular, by Lemma \ref{det(AB) < tr(AB)} and density of $V$, in follows that $A(x) D^2 u(x) = \lambda(x)I_n$ for some positive smooth function $\lambda$ on $\Omega$ and for each $x \in \Omega$. Equivalently, since $D^2 u(x)$ is invertible, there is a positive smooth function $g$ on $\Omega$ satisfying
$$A(x) = g(x)(\cof D^2 u(x)) \qquad \text{for } x \in \Omega.$$
Recalling (\ref{eq: D^2u is invertible in my Serre rigidity}) once again, we obtain
$$\det A = g^n \det (\cof D^2u) = g^n (\det D^2 u)^{n-1} = g^n \det A.$$
Therefore, $g \equiv 1$. The proof of the Lemma is complete.
\end{proof}

Putting Lemma \ref{A and u in my improved Serre rigidity} and Lemma \ref{A = cof(D^2 u) in my Serre rigidity proof} together, we finish the proof of Theorem \ref{my improved Serre}.

\section{Proof of Theorem \ref{my quermass 2nd}}\label{sec:Proof of my quermass}

In this section, we use some properties of the Newton tensors (see e.g. \cite{Reilly73}). Let $n \in \mathbb N$ and $A$ be an $(n \times n)$-symmetric matrix and $1 \leq k \leq n-1$, we recall that the $k$-Newton tensor is defined by
$$T_k^{ij}(A) \coloneqq \frac{\partial \sigma_{k+1}(A)}{\partial A_{ij}}.$$
Since $\sigma_{k+1}$ is $(k+1)$-homogeneous, by the definition of $T_k$, the Euler's homogeneous function theorem yields
\begin{equation}\label{trace T_k times itself}
\sigma_{k+1}(A) = \frac{1}{k+1} \tr \left(T_k(A) \cdot A \right).
\end{equation}
The Newton tensors have an inductive formula, beginning with $T_0 = I_n$, and
\begin{equation}\label{T_k induction}
T_{k}(A) = \sigma_{k}(A)\,I_n - T_{k-1}(A) \cdot A.
\end{equation}
Taking trace of (\ref{T_k induction}) and using (\ref{trace T_k times itself}) gives
\begin{equation}\label{eq:tr(T_k)}
\tr T_{k}(A) = (n-k)\, \sigma_{k}(A).
\end{equation}
If we assume further that $A \in \Gamma_{k+1}$, then as mentioned in the introduction, $T_k(A)$ is positive definite. Moreover, Gårding's inequality for $\sigma_k$ has the following form:

\begin{proposition}[\cite{Garding}, Theorem 5]\label{Garding inequality for sigma_k}
	Let $n \in \mathbb N$ and $1 \leq k \leq n-1$. Suppose that $A$ and $B$ are square symmetric matrices of size $n$. If $A, B \in \Gamma_{k+1}$, then
	$$\tr(T_k(A)B) \geq (k+1) (\sigma_{k+1}(A))^\frac{k}{k+1} (\sigma_{k+1}(B))^\frac{1}{k+1}.$$
\end{proposition}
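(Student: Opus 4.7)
The plan is to derive the inequality as an immediate consequence of the fact that $F(A) \coloneqq \sigma_{k+1}(A)^{1/(k+1)}$ is concave on the open Gårding cone $\Gamma_{k+1}$, which is the central assertion of Gårding's theorem on hyperbolic polynomials (Theorem 5 of \cite{Garding}). Once this concavity is in hand, the proposition follows by combining the supergradient inequality with Euler's identity for the homogeneous polynomial $\sigma_{k+1}$.

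First I would record two algebraic identities that are built into the definitions. By the very definition of $T_k$ given in the excerpt together with the chain rule, the Fréchet derivative of $F$ at a matrix $A \in \Gamma_{k+1}$ is
$$dF(A)(B) \;=\; \frac{1}{k+1}\,\sigma_{k+1}(A)^{-\frac{k}{k+1}}\,\tr(T_k(A)\,B)$$
for every symmetric $B$. Since $\sigma_{k+1}$ is homogeneous of degree $k+1$, Euler's identity yields $(k+1)\,\sigma_{k+1}(A) = \tr(T_k(A)\,A)$, which is exactly (\ref{trace T_k times itself}); equivalently $dF(A)(A) = F(A)$.

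Next I would invoke the concavity of $F$ on the convex cone $\Gamma_{k+1}$, which supplies the supergradient inequality
$$F(B) \;\leq\; F(A) + dF(A)(B-A) \;=\; dF(A)(B)$$
for all $A, B \in \Gamma_{k+1}$, where in the equality I used $dF(A)(A) = F(A)$. Substituting the explicit expression for $dF(A)(B)$ and multiplying through by $(k+1)\,\sigma_{k+1}(A)^{k/(k+1)} > 0$ gives exactly
$$\tr(T_k(A)\,B) \;\geq\; (k+1)\,\sigma_{k+1}(A)^{\frac{k}{k+1}}\,\sigma_{k+1}(B)^{\frac{1}{k+1}},$$
which is the desired inequality.

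The main (and essentially only) obstacle is the concavity of $F$ on $\Gamma_{k+1}$; this is the heart of Gårding's theorem and is nontrivial to prove from scratch. The standard self-contained route passes through the fully polarized symmetric multilinear form $\Lambda(A_1,\dots,A_{k+1})$ of $\sigma_{k+1}$, establishing the generalized Gårding inequality $\Lambda(A_1,\dots,A_{k+1})^{k+1} \geq \prod_{i=1}^{k+1}\Lambda(A_i,\dots,A_i)$ by exploiting hyperbolicity of $\sigma_{k+1}$ along every direction in $\Gamma_{k+1}$, and then specializing to $A_1 = \cdots = A_k = A$ and $A_{k+1} = B$ to recover the present inequality without appealing to concavity of $F$. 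For the purposes of this paper, however, directly quoting Theorem 5 of \cite{Garding} and carrying out the two-line derivation above is sufficient.
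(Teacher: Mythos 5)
The paper does not prove this proposition at all; it is stated as a direct citation of Theorem 5 in G\aa rding's paper, and the proof is entirely delegated to that reference. Your derivation is correct: the supergradient inequality for the concave function $F(A) = \sigma_{k+1}(A)^{1/(k+1)}$ on $\Gamma_{k+1}$, combined with Euler's identity $dF(A)(A) = F(A)$, does yield the stated bound in two lines. One small remark on routing: what G\aa rding's Theorem 5 actually asserts is the polarized form inequality $M(x_1,\dots,x_{k+1}) \geq \prod_i P(x_i)^{1/(k+1)}$, from which the present proposition follows immediately by specializing $x_1 = \cdots = x_k = A$, $x_{k+1} = B$ and using $\tr(T_k(A)B) = (k+1)\,M(A,\dots,A,B)$; the concavity of $F$ that you invoke as your main tool is itself a consequence of that polarized inequality (not a separate prior fact), so your primary route inserts an intermediate step that the specialization avoids. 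You do acknowledge this in your closing paragraph, and either path is acceptable, but if the goal is literal fidelity to ``Theorem 5 of \cite{Garding}'' as cited, the direct specialization of the polarized inequality is the shorter and more faithful reading.
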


To prove Corollary \ref{my quermass result}, we show the following consequence of Gårding's inequality:

\begin{lemma}\label{det T_k > H_{k+1}}
Let $n \in \mathbb N$, $n \geq 2$, and $1 \leq k \leq n-2$. Suppose that $A$ is a square symmetric matrix of size $n$. If $A \in \Gamma_{k+1}$, then
\begin{equation}\label{eq:det T_k > H_{k+1}}
\det T_k(A) \geq {n-1 \choose k}^n H_{k+1}(A)^{\frac{nk}{k+1}}.
\end{equation}
The equality holds if and only if $A = \lambda I_n$ for some $\lambda > 0$.
\end{lemma}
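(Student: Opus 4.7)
I diagonalize $A$ in an orthonormal basis so $A = \mathrm{diag}(\lambda_1, \ldots, \lambda_n)$ with $(\lambda_1, \ldots, \lambda_n) \in \Gamma_{k+1}$, making $T_k(A)$ diagonal with strictly positive eigenvalues $\tau_i := \sigma_k(\lambda|i)$. The plan is to pair Gårding's inequality (Proposition \ref{Garding inequality for sigma_k}) with the AM-GM inequality applied to $\sigma_{k+1}(B)$ for a cleverly chosen auxiliary matrix $B$.

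The crucial choice will be $B := T_k(A)^{-1}$, i.e., the diagonal matrix with entries $1/\tau_i$; as a positive definite matrix it lies in $\Gamma_n \subseteq \Gamma_{k+1}$, and it is engineered so that $\tr(T_k(A)\,B) = n$ is a universal constant. Plugging this into Proposition \ref{Garding inequality for sigma_k} immediately gives the upper bound
\[
\sigma_{k+1}(B) \leq \frac{n^{k+1}}{(k+1)^{k+1}\,\sigma_{k+1}(A)^{k}}.
\]
On the other hand, writing $\sigma_{k+1}(B)$ as the sum over the $\binom{n}{k+1}$ positive terms $\prod_{i\in S}\tau_i^{-1}$, and noting that each index $i$ lies in exactly $\binom{n-1}{k}$ of the $(k+1)$-subsets $S$, AM-GM gives
\[
\sigma_{k+1}(B) \geq \binom{n}{k+1}\,(\det T_k(A))^{-(k+1)/n},
\]
after simplification via $\binom{n-1}{k}/\binom{n}{k+1} = (k+1)/n$. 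Chaining these two estimates, using $\sigma_{k+1}(A) = \binom{n}{k+1}H_{k+1}(A)$, and simplifying the constants via the identity $(k+1)\binom{n}{k+1} = n\binom{n-1}{k}$ will yield \eqref{eq:det T_k > H_{k+1}} exactly.

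For rigidity, equality in the final bound forces equality in the AM-GM step, which in turn requires $\prod_{i\in S}\tau_i$ to be independent of $S$; since $n \geq k+2$, comparing subsets that differ in one element gives $\tau_i = \tau_j$ for every pair. I will then invoke the identity $\sigma_k(\lambda|i) - \sigma_k(\lambda|j) = (\lambda_j - \lambda_i)\,\sigma_{k-1}(\lambda|i,j)$ together with the positivity of $\sigma_{k-1}(\lambda|i,j)$ stemming from the standard fact that the restriction of a $\Gamma_{k+1}$ vector to $n-2$ coordinates lies in the Gårding cone $\Gamma_{k-1}^{(n-2)}$; this forces $\lambda_i = \lambda_j$ for all $i, j$, and hence $A = \lambda I_n$ with $\lambda > 0$. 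The main obstacle I anticipate is locating the right auxiliary $B$: the Gårding upper bound and the AM-GM lower bound have to close with no slack in constants, and the inverse choice $B = T_k(A)^{-1}$ succeeds precisely because $\tr(T_k(A)B) = n$ is independent of $A$ while the reciprocal structure of $\sigma_{k+1}(B)$ meshes with $\det T_k(A)$ exactly through the binomial identity above.
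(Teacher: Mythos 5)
Your argument is correct and is essentially the paper's: since $\cof T_k(A) = (\det T_k(A))\,T_k(A)^{-1}$ and both sides of Gårding's inequality (Proposition \ref{Garding inequality for sigma_k}) are homogeneous of degree one in $B$, your choice $B = T_k(A)^{-1}$ is the paper's choice $B = \cof T_k(A)$ up to a positive scalar, and your AM--GM lower bound on $\sigma_{k+1}(T_k(A)^{-1})$ is exactly the Maclaurin step $H_{k+1}(B) \geq H_n(B)^{(k+1)/n}$ the paper uses. Your rigidity argument simply makes explicit, via the identity $\sigma_k(\lambda|i) - \sigma_k(\lambda|j) = (\lambda_j - \lambda_i)\sigma_{k-1}(\lambda|i,j)$ and the positivity of $\sigma_{k-1}(\lambda|i,j)$ for $\lambda \in \Gamma_{k+1}$, the final implication ``$T_k(A)=\eta I_n$ and $A \in \Gamma_{k+1}$ imply $A = \lambda I_n$'' that the paper states without elaboration.
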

Recall that $H_k(A)$ is the normalization of $\sigma_k(A)$, i.e, $H_k(A) \coloneqq \sigma_k(A) / {n \choose k}$. When $k = n-1$, (\ref{eq:det T_k > H_{k+1}}) becomes an equality.

\begin{proof}[Proof of Lemma \ref{det T_k > H_{k+1}}]
Since $A \in \Gamma_{k+1}$, its $k$-Newton tensor $T_k(A)$ is positive definite. Thus, by taking $B = \cof T_k(A)$ in Proposition $\ref{Garding inequality for sigma_k}$ and Maclaurin's inequality, it follows that
\begin{align}
n \det T_k(A) &= \tr(T_k(A) \cof T_k(A))\nonumber\\
&\geq (k+1)\, \sigma_{k+1}(A)^\frac{k}{k+1}\, \sigma_{k+1}(\cof T_k(A))^\frac{1}{k+1}\nonumber\\
&\geq (k+1)\, \sigma_{k+1}(A)^\frac{k}{k+1}\, {n \choose k+1}^\frac{1}{k+1} (\det \cof T_k(A))^\frac{1}{n}\label{eq:Maclurin in det T_k > H_{k+1} proof}\\
&= (k+1)\, \sigma_{k+1}(A)^\frac{k}{k+1}\, {n \choose k+1}^\frac{1}{k+1} (\det T_k(A))^\frac{n-1}{n}.\nonumber
\end{align}
Consequently, we obtain
\begin{align*}
\det T_k(A) \geq \left( \frac{k+1}{n} \right)^n {n \choose k+1}^\frac{n}{k+1}\sigma_{k+1}(A)^\frac{nk}{k+1} =  {n-1 \choose k}^n H_{k+1}(A)^{\frac{nk}{k+1}}.
\end{align*}
This finish the proof of (\ref{eq:det T_k > H_{k+1}}). If the equality in (\ref{eq:det T_k > H_{k+1}}) holds, then by (\ref{eq:Maclurin in det T_k > H_{k+1} proof}), we must have
$$\sigma_{k+1}(\cof T_k(A))^\frac{1}{k+1} = (\det \cof T_k(A))^\frac{1}{n}.$$
Hence, by Maclaurin's inequality, $\cof T_k(A) = \mu I_n$ for some $\mu > 0$. Equivalently, this means $T_k(A) = \eta I_n$ for some $\eta > 0$. Since $A \in \Gamma_{k+1}$, we therefore conclude that $A = \lambda I_n$ for some $\lambda > 0$. The proof of the Lemma is complete.
\end{proof}

Let $\Sigma$ be a closed hypersurface of $\mathbb R^{n+1}$ and recall the notations $\nu$, $\sff_\nu$ as in Section \ref{subsec:An inequality related to quermassintegrals}. If $\Sigma$ is $(k+1)$-convex, then by applying Lemma \ref{det T_k > H_{k+1}} with $A = \sff_\nu$, we obtain
$$\det T_k \geq {n-1 \choose k}^n H_{k+1}^{\frac{nk}{k+1}}.$$
As mentioned in Section \ref{subsec:An inequality related to quermassintegrals}, Corollary \ref{my quermass result} follows from this and Theorem \ref{my quermass 2nd}.\\

One important property of $T_k$ is the fact that it is divergence-free (see e.g. \cite{Reilly73}). More specifically, for every $x \in \Sigma$, if $\{e_1, e_2, \dots, e_n\}$ is a local orthonormal frame in a neighborhood of $x$ in $\Sigma$, $T_k$ satisfies
$$\nabla_{e_j} (T_k)_{ij} = 0 \quad \text{for each } 1 \leq i \leq n.$$

The proof of Theorem \ref{my quermass 2nd} is divided into two parts. In the first part, we prove the inequality (\ref{eq:my quermass 2nd}), and in the second part, we prove the rigidity statement.

\subsection{Proof of Theorem \ref{my quermass 2nd}: Inequality (\ref{eq:my quermass 2nd})}\label{Proof of Theorem {my quermass 2nd}: Inequality}
Let $n \in \mathbb N$, $n \geq 2$, $0 \leq k \leq n-2$, and $\Sigma$ be a closed $(k+1)$-convex hypersurface in $\mathbb R^{n+1}$. We assume for the moment that $\Sigma$ is connected. After rescaling $\Sigma$, we may assume without loss of generality that
\begin{equation}\label{eq:scaling in AF}
\int_\Sigma H_k \, d\vol_\Sigma = \int_\Sigma H_{k+1} \, d\vol_\Sigma.
\end{equation}
Then, we need to show
\begin{equation}\label{eq:my quermass 2nd goal}
|\mathbb S^n| \leq  {n-1 \choose k}^n \int_\Sigma \frac{H_k^{n+1}}{(\det T_k) H_{k+1}} \, d\vol_\Sigma.
\end{equation}
By the $(k+1)$-convexity assumption, the $k$-Newton tensor $T_k$ is positive definite. Thus, by (\ref{eq:scaling in AF}) and the assumption that $\Sigma$ is connected, there exists (uniquely modulo a constant) a smooth solution $u$ to the equation:
$$\div(T_k \nabla^\Sigma u) = (n-k){n \choose k} (H_k - H_{k+1}).$$
We define the map $\Phi$ by
\begin{align*}
\Phi :  \Sigma \times \mathbb R & \longrightarrow \mathbb R^{n+1}\\
(x,t) & \longmapsto \nabla^\Sigma u(x) + t \nu(x).
\end{align*}
We also define the sets $V \subset \Sigma \times \mathbb R$ and $\Omega \subset \Sigma$ by
\begin{align*}
V &\coloneqq \{(x,t) \in \Sigma \times \mathbb R :\, | \nabla^\Sigma u(x)|^2 + t^2 < 1 \text{ and } D_\Sigma^2u(x) - t \sff_\nu(x) \geq 0\},\\
\Omega &\coloneqq \{x \in \Sigma:\, (x, t) \in V \text{ for some } t \in \mathbb R\}.
\end{align*}

\begin{lemma}\label{Phi(V) in my quermass}
We have $\mathbb B^{n+1} = \Phi(V)$. 
\end{lemma}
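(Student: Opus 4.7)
The plan is to mimic the strategy used in Lemma \ref{Phi(V) in logSobolev m=1} and Lemma \ref{Phi(V_r) in logSobolev}. The inclusion $\Phi(V)\subseteq \mathbb B^{n+1}$ is immediate: for $(x,t)\in V$, the vector $\nabla^\Sigma u(x)\in T_x\Sigma$ is orthogonal to $\nu(x)$, so $|\Phi(x,t)|^2=|\nabla^\Sigma u(x)|^2+t^2<1$. The substantive content is the reverse inclusion $\mathbb B^{n+1}\subseteq\Phi(V)$, which I would establish by a minimization argument.

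Fix an arbitrary $\xi\in\mathbb B^{n+1}$ and consider the smooth function
$$v(x)\coloneqq u(x)-\langle \xi,x\rangle,\qquad x\in\Sigma.$$
Since $\Sigma$ is closed, $v$ attains its global minimum at some point $x_0\in\Sigma$. The first-order condition $\nabla^\Sigma v(x_0)=0$ gives $\nabla^\Sigma u(x_0)=\xi^{\mathrm{tan}}_{x_0}$, the tangential projection of $\xi$ onto $T_{x_0}\Sigma$. Because $\Sigma$ is a hypersurface, the normal space $(T_{x_0}\Sigma)^{\perp}$ is spanned by $\nu(x_0)$, so we may write $\xi=\nabla^\Sigma u(x_0)+t_0\nu(x_0)$ with $t_0=\langle \xi,\nu(x_0)\rangle$. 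The orthogonality of $\nabla^\Sigma u(x_0)$ and $\nu(x_0)$ then gives $|\nabla^\Sigma u(x_0)|^2+t_0^2=|\xi|^2<1$, which is the first condition defining $V$.

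The second condition requires computing the intrinsic Hessian of the linear function $\ell(x)=\langle \xi,x\rangle$ on $\Sigma$. For tangent vectors $V,W\in T_{x_0}\Sigma$, one has
$$D^2_\Sigma \ell(V,W)=\langle \xi,\overline{\nabla}_V W-\nabla^\Sigma_V W\rangle=\langle \xi,\sff_\nu(V,W)\,\nu(x_0)\rangle=t_0\,\sff_\nu(V,W),$$
using the hypersurface convention for the second fundamental form from Section \ref{sec:my logSoblev m = 1}. The second-order minimality condition $D^2_\Sigma v(x_0)\geq 0$ then rearranges to
$$D^2_\Sigma u(x_0)-t_0\,\sff_\nu(x_0)\geq 0,$$
which is exactly the remaining defining inequality for $V$. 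Hence $(x_0,t_0)\in V$ and $\Phi(x_0,t_0)=\xi$, finishing the argument. The only place requiring any care is the Hessian computation above, which isolates precisely the term $t_0\sff_\nu$ that appears in the definition of $V$; everything else is a direct translation of the analogous argument already used twice earlier in the paper.
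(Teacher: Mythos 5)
Your proposal is correct and takes essentially the same route as the paper: define $v(x)=u(x)-\langle\xi,x\rangle$, take a global minimum $x_0$ (using closedness of $\Sigma$), and read off the first- and second-order conditions to produce $(x_0,t_0)\in V$ with $\Phi(x_0,t_0)=\xi$. The only difference is that you spell out the Hessian identity $D^2_\Sigma\langle\xi,\cdot\rangle=t_0\,\sff_\nu$ explicitly, which the paper leaves implicit.
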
 
\begin{proof}
Clearly, $\Phi(V) \subseteq \mathbb B^{n+1}$. In order to show the reverse inclusion, we fix an arbitrary vector $\xi \in \mathbb B^{n+1}$. For every $x\in \Sigma$, we denote $\xi^{\text{tan}}_x$ the image of $\xi$ under the orthogonal projection onto $T_x\Sigma$. In other words,
$$\langle \xi^{\text{tan}}_x, V \rangle = \langle \xi , V \rangle \quad \text{for every } V \in T_x\Sigma.$$
Then we consider the function
$$v(x) \coloneqq u(x) - \langle \xi, x \rangle \quad \text{for } x \in \Sigma.$$
Since $\Sigma$ is closed, the function $v$ attains its minimum at some point $x_0 \in \Sigma$. Then
$$0 = \nabla^\Sigma v(x_0) = \nabla^\Sigma u(x_0) - \xi^{\text{tan}}.$$
Thus $\xi = \nabla^\Sigma u(x_0) + t_0\nu$ for some $t_0 \in \mathbb R$. Moreover, we also have
$$0 \leq D_\Sigma^2 v(x_0) = D_\Sigma^2 u(x_0) - t_0\sff_\nu(x_0),$$
and
$$|\nabla^\Sigma u(x_0)|^2 + t_0^2 = |\xi|^2 < 1.$$
This finishes the proof of the Lemma.
\end{proof}

\begin{lemma}\label{det(DPhi) in my equermass}
For every $(x,t) \in V$, we have $H_k - (1+t)H_{k+1} \geq 0$. Moreover, the Jacobian determinant of $\Phi$ satisfies
$$|\det D\Phi(x,t)| \leq \frac{1}{\det T_k} \left( {n-1 \choose k}(H_k - (1+t)H_{k+1}) \right)^n \text{ for } (x,t) \in V.$$
\end{lemma}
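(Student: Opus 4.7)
The plan is to derive both assertions as consequences of three ingredients: the divergence-freeness of the Newton tensor $T_k$, the matrix arithmetic--geometric mean inequality (Lemma \ref{det(AB) < tr(AB)}, already exploited in the proof of Theorem \ref{my improved Serre}), and the homogeneity identity $\tr(T_k(A)\cdot A)=(k+1)\sigma_{k+1}(A)$ from (\ref{trace T_k times itself}).

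I would begin with the Jacobian computation. Fix $(x,t)\in V$ and a local orthonormal frame $\{e_1,\dots,e_n\}$ of $\Sigma$ near $x$. A direct calculation using the Weingarten formula gives
$$\langle \overline{\nabla}_{e_i}\Phi,e_j\rangle = (D_\Sigma^2u)(e_i,e_j)-t\,\sff_\nu(e_i,e_j),\qquad \partial_t\Phi=\nu,$$
so with respect to the frame $\{e_1,\dots,e_n,\nu\}$ of $\mathbb R^{n+1}$ the matrix $D\Phi(x,t)$ is block triangular with diagonal blocks $D_\Sigma^2u-t\sff_\nu$ and $1$. Because $(x,t)\in V$ forces $D_\Sigma^2u-t\sff_\nu\geq 0$, we obtain
$$|\det D\Phi(x,t)|=\det(D_\Sigma^2u-t\sff_\nu).$$

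For the first claim, the divergence-freeness of $T_k$ (recalled just before the lemma) collapses the PDE for $u$ to
$$\tr(T_k\, D_\Sigma^2u)=\div(T_k\nabla^\Sigma u)=(n-k)\binom{n}{k}(H_k-H_{k+1}).$$
Since $T_k>0$ and $D_\Sigma^2u-t\sff_\nu\geq 0$, one has $\tr(T_k\,D_\Sigma^2u)\geq t\,\tr(T_k\sff_\nu)$. Combining this with
$$\tr(T_k\sff_\nu)=(k+1)\sigma_{k+1}(\sff_\nu)=(n-k)\binom{n}{k}H_{k+1},$$
via (\ref{trace T_k times itself}) and the identity $(k+1)\binom{n}{k+1}=(n-k)\binom{n}{k}$, yields $H_k-(1+t)H_{k+1}\geq 0$.

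For the Jacobian bound, I would apply Lemma \ref{det(AB) < tr(AB)} to the positive definite $T_k$ and the positive semi-definite $D_\Sigma^2u-t\sff_\nu$:
$$(\det T_k)\det(D_\Sigma^2u-t\sff_\nu)=\det\bigl(T_k(D_\Sigma^2u-t\sff_\nu)\bigr)\leq\left(\frac{\tr\bigl(T_k(D_\Sigma^2u-t\sff_\nu)\bigr)}{n}\right)^n.$$
By the previous paragraph the inner trace equals $(n-k)\binom{n}{k}(H_k-(1+t)H_{k+1})$; dividing by $n$ converts the prefactor to $\binom{n-1}{k}$, and rearranging delivers exactly the stated estimate. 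The only nontrivial ingredient is the matrix AM--GM inequality, which is the same lemma already used in Section \ref{Proof of my improved Serre}; the remaining work is bookkeeping with the identities for $T_k$ and $\sigma_{k+1}$, so I expect no real obstacle beyond matching the combinatorial constants.
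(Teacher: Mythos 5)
Your proof is correct and follows essentially the same route as the paper: same block-triangular structure of $D\Phi$, same use of the divergence-freeness of $T_k$ to rewrite $\tr(T_k D_\Sigma^2 u)$ via the PDE, the identity $\tr(T_k\sff_\nu)=(k+1)\sigma_{k+1}(\sff_\nu)=(n-k)\binom{n}{k}H_{k+1}$, and the same application of the matrix AM--GM Lemma \ref{det(AB) < tr(AB)}. The only cosmetic difference is that you first isolate the sign claim $H_k-(1+t)H_{k+1}\geq 0$ and then derive the determinant bound, whereas the paper obtains both from a single trace computation.
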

\begin{proof}
To see this, we fix arbtrarily $(x,t) \in V$. Then, we choose a local orthonomal frame $\{e_1, \dots, e_n\}$ at $x$. By the same computation as in the proof of Lemma \ref{det(D Phi) in logSobolev}, with respect to the frame $\{e_1, \dots, e_n, \nu \}$, the matix representing $D\Phi$ has the form
\begin{equation*}
D\Phi(x,t) = \left(\begin{array}{@{}c | c@{}}
D_\Sigma^2 u(x) - t\sff_\nu(x) & \; * \; \\
\hline
0 & 1
\end{array}\right).
\end{equation*}
Since $T_k$ is positive definite on $\Sigma$ and $D_\Sigma^2 u(x) - t\sff_\nu(x) \geq 0$, it follows that
\begin{equation*}
|\det D\Phi(x,t)| = \det(D_\Sigma^2 u(x) - t\sff_\nu(x)) = \frac{1}{\det T_k} \det(T_k(D_\Sigma^2 u - t\sff_\nu)).
\end{equation*}
On the other hand, using  (\ref{eq:tr(T_k)}), the equation of $u$, and the fact that $T_k$ is divergence-free, we have
\begin{align*}
0 \leq \tr(T_k(D_\Sigma^2 u - t\sff_\nu)) &= \tr(T_k\, D_\Sigma^2 u) - t \tr(T_k \sff_\nu)\\
&= \div(T_k \nabla^\Sigma u) - t \tr(T_k \sff_\nu)\\
&= (n-k){n \choose k}(H_k - (1+t)H_{k+1}).
\end{align*}
Consequently, putting these together and applying Lemma \ref{det(AB) < tr(AB)} to the pair $T_k$ and $D_\Sigma^2 u(x) - t\sff_\nu(x)$, we therefore obtain
\begin{align}
|\det D\Phi(x,t)| &= \frac{1}{\det T_k} \det(T_k(D_\Sigma^2 u - t\sff_\nu))\nonumber \\
&\leq \frac{1}{\det T_k} \left( \frac{1}{n} \tr(T_k(D_\Sigma^2 u - t\sff_\nu))\right)^n \label{eq:det(DPhi) in my quermass proof}\\
&= \frac{1}{\det T_k} \left( \frac{1}{n} (n-k){n \choose k}(H_k - (1+t)H_{k+1}) \right)^n. \nonumber
\end{align}
From here, using the indentiy $\frac{1}{n}(n-k){n \choose k} = {n-1 \choose k}$, we finish the proof of the Lemma.
\end{proof}

It follows from Lemma \ref{Phi(V) in my quermass} and Lemma \ref{det(DPhi) in my equermass} that for every $(x,t) \in V$, the range of $t$ satisfies $-1 \leq t \leq \frac{H_k}{H_{k+1}} - 1$. Moreover, we have
\begin{align}
|\mathbb B^{n+1}| &= |\Phi(V)|\nonumber\\
&\leq \int_\Omega \int_{-1}^{\frac{H_k}{H_{k+1}}-1}  |\det D\Phi(x,t)|\, 1_V(x,t)\, dt\, d\vol_\Sigma(x)\nonumber\\
&\leq \int_\Omega \int_{-1}^{\frac{H_k}{H_{k+1}}-1} \frac{1}{\det T_k} \left(  {n-1 \choose k}(H_k - (1+t)H_{k+1}) \right)^n 1_V(x,t)\, dt\, d\vol_\Sigma(x)\nonumber\\
&\leq \int_\Omega \int_{-1}^{\frac{H_k}{H_{k+1}}-1} \frac{1}{\det T_k} \left(  {n-1 \choose k}(H_k - (1+t)H_{k+1}) \right)^n dt\, d\vol_\Sigma\nonumber\\
&= \frac{1}{n+1} {n-1 \choose k}^n \int_\Omega \frac{H_k^{n+1}}{(\det T_k) H_{k+1}}\, d\vol_\Sigma\nonumber\\
&\leq \frac{1}{n+1} {n-1 \choose k}^n \int_\Sigma \frac{H_k^{n+1}}{(\det T_k) H_{k+1}}\, d\vol_\Sigma. \label{eq:last equation in my quermass 2nd ineq}
\end{align}
This coincides with (\ref{eq:my quermass 2nd goal}) and therefore proves (\ref{eq:my quermass 2nd}) when $\Sigma$ is connected.

Now, suppose that $\Sigma$ is disconnected. Let $\Sigma_1$ be one of its connected component. We apply (\ref{eq:my quermass 2nd}) to $\Sigma_1$ and rewrite the resulting inequality as
\begin{equation}\label{eq:my quermass 2nd connected components}
|\mathbb S^n|^\frac{1}{n+1} \left(\int_{\Sigma_1} H_k\right) \leq {n-1 \choose k}^\frac{n}{n+1} \left(\int_{\Sigma_1} H_{k+1}\right) \left(\int_{\Sigma_1} \frac{H_k^{n+1}}{(\det T_k)H_{k+1}}\right)^\frac{1}{n+1}.
\end{equation}
Then, we sum \ref{eq:my quermass 2nd connected components} over its connected components and use the elementary inequality
$$a_1 \, . \, b_1^\frac{1}{n+1} + a_2 \, . \, b_2^\frac{1}{n+1} < (a_1 + a_2) \, . \, (b_1 + b_2)^\frac{1}{n+1} \quad \text{for } a_1, a_2, b_1, b_2 > 0$$
to finish the proof of  (\ref{eq:my quermass 2nd}) in this case.

\subsection{Proof of Theorem \ref{my quermass 2nd}: Rigidity}

Let $n \in \mathbb N$, $n \geq 2$, $0 \leq k \leq n-2$ and $\Sigma$ be a closed $(k+1)$-convex hypersurface in $\mathbb R^{n+1}$ such that the equality holds in (\ref{eq:my quermass 2nd}). By the argument from the last paragraph of Section \ref{Proof of Theorem {my quermass 2nd}: Inequality}, we conclude that $\Sigma$ is connected. After rescaling $\Sigma$, we may assume without loss of generality that
\begin{equation}\label{eq:scaling in my quermass 2nd rigidity}
\int_\Sigma H_k \, d\vol_\Sigma = \int_\Sigma H_{k+1} \, d\vol_\Sigma.
\end{equation}
Equivalently, this means
\begin{equation}\label{eq:my quermass 2nd rigidity goal}
|\mathbb S^n| =  {n-1 \choose k}^n \int_\Sigma \frac{H_k^{n+1}}{(\det T_k) H_{k+1}} \, d\vol_\Sigma.
\end{equation}
We continue to define the function $u$, the sets $V$, $\Omega$ and the map $\Phi$ as in Section \ref{Proof of Theorem {my quermass 2nd}: Inequality}. Then, for (\ref{eq:my quermass 2nd rigidity goal}) to hold, (\ref{eq:last equation in my quermass 2nd ineq}) must be the chain of equalities. In particular, the last equality in (\ref{eq:last equation in my quermass 2nd ineq}) implies that $\Omega$ is dense in $\Sigma$. Moreover, if we define the set $\widetilde{\Sigma}$ by
$$\widetilde{\Sigma} \coloneqq \{(x,t):\, x \in \Sigma \, \text{ and } -1 \leq t \leq \frac{H_k}{H_{k+1}} - 1 \text{ at } x\},$$
then it follows from the fourth and fifth equalities in the chain  (\ref{eq:last equation in my quermass 2nd ineq}) that $V$ is dense in $\widetilde{\Sigma}$. By continuity, we conclude that $D_\Sigma^2u(x) - t \sff_\nu(x) \geq 0$ for every $(x,t) \in \widetilde{\Sigma}$. Consequently, by the second equality in the chain (\ref{eq:last equation in my quermass 2nd ineq}) and Lemma \ref{det(DPhi) in my equermass}, we infer that
$$|\det D\Phi(x,t)| = \frac{1}{\det T_k} \left( {n-1 \choose k}(H_k - (1+t)H_{k+1}) \right)^n \text{ for } (x,t) \in \widetilde{\Sigma}.$$
Thus, by (\ref{eq:det(DPhi) in my quermass proof}) and density of $V$ in $\widetilde{\Sigma}$, we must have that
$$\det(T_k(D_\Sigma^2 u - t\sff_\nu))  = \left( \frac{1}{n} \tr(T_k(D_\Sigma^2 u - t\sff_\nu))\right)^n \quad \text{at every } (x,t) \in \widetilde{\Sigma}.$$
Hence, by fixing a point $x\in \Sigma$ while letting $t$ vary and applying Lemma \ref{det(AB) < tr(AB)} to the pair $T_k$ and $D_\Sigma^2 u - t\sff_\nu$ at $x$, we derive $T_k\sff_\nu = \lambda I_n$ where $\lambda = (k+1)\sigma_{k+1}(\sff_\nu) > 0$. So $\sff_\nu = \lambda(T_k)^{-1}$ which is positive definite. Furthermore, using (\ref{T_k induction}), we obtain $T_{k+1} = \eta I_n$ for some $\eta > 0$. Since $k \leq n-2$ and $\sff_\nu \in \Gamma_n$, this implies that $x$ is an umbilical point. Because $x$ is arbitrarily chosen, we conclude that $\Sigma$ is a sphere. The proof of Proposition \ref{my quermass 2nd} is complete.

\section*{Acknowledgements}
This is part of the author's thesis. The author is grateful to his advisor, Professor YanYan Li, for extensive discussions and helpful suggestions.

\appendix
\section{An auxiliary lemma}\label{sec:Appendix}

 For readers' convenience, we recall a standard fact that have been used in the main parts of the paper.
 
 \begin{lemma}\label{det(AB) < tr(AB)}
For $n \in \mathbb N$, let $A$ and $B$ be square symmetric matrices of size $n$. Assume that $A$ is positive definite and $B$ is non-negative definite. Then
 \begin{equation}\label{eq:det(AB) < tr(AB)}
 \det(AB) \leq \left(\frac{\tr(AB)}{n}\right)^n.
 \end{equation}
 The equality holds if and only if $AB = \lambda I_n$ for some $\lambda \geq 0$, where $I_n$ is the identity matrix.
 \end{lemma}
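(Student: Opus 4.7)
The plan is to symmetrize the product $AB$ by conjugation with $A^{1/2}$ and then reduce the inequality to the arithmetic--geometric mean inequality applied to the eigenvalues of the symmetrized matrix. The main observation is that although $AB$ need not itself be symmetric, it is similar to a symmetric positive semi-definite matrix, so its spectrum is real and non-negative, which is all the AM--GM step requires.

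Concretely, I would proceed as follows. Since $A$ is symmetric and positive definite, it admits a unique symmetric positive definite square root $A^{1/2}$. Set
$$C := A^{1/2} B A^{1/2}.$$
Because $B$ is symmetric and non-negative definite, $C$ is symmetric and non-negative definite, hence its eigenvalues $\mu_1, \dots, \mu_n$ are real and $\geq 0$. Using multiplicativity of determinant and cyclicity of trace, I compute
$$\det(AB) = \det(A^{1/2}) \det(B) \det(A^{1/2}) = \det(C),$$
and
$$\tr(AB) = \tr(A^{1/2} \cdot A^{1/2} B) = \tr(A^{1/2} B A^{1/2}) = \tr(C).$$
Therefore \eqref{eq:det(AB) < tr(AB)} is equivalent to
$$\prod_{i=1}^n \mu_i \leq \left(\frac{1}{n}\sum_{i=1}^n \mu_i\right)^n,$$
which is exactly the arithmetic--geometric mean inequality applied to the non-negative reals $\mu_1, \dots, \mu_n$.

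For the rigidity statement, the equality case of AM--GM forces $\mu_1 = \cdots = \mu_n =: \mu \geq 0$, so $C = \mu I_n$. Since $A^{1/2}$ is invertible, this gives $B = \mu A^{-1}$, and therefore $AB = \mu I_n$ with $\mu \geq 0$. Conversely, if $AB = \lambda I_n$ for some $\lambda \geq 0$, then both sides of \eqref{eq:det(AB) < tr(AB)} equal $\lambda^n$, so equality holds. No real obstacle is expected; the only point requiring a little care is verifying that the conjugation $B \mapsto A^{1/2} B A^{1/2}$ preserves both the determinant (via the factorization $\det(AB) = \det(A)\det(B)$) and the trace (via cyclicity), so that the inequality translates cleanly into a statement about a single symmetric positive semi-definite matrix.
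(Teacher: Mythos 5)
Your proof is correct. The paper itself does not reproduce a proof of this lemma (it simply refers to Serre's paper), so there is no in-text argument to compare against; but your symmetrization via $A^{1/2}$, the observation that $\det(AB)=\det(C)$ and $\tr(AB)=\tr(C)$ for $C=A^{1/2}BA^{1/2}$, and the reduction to the AM--GM inequality on the nonnegative eigenvalues of $C$ is exactly the standard argument, and your treatment of the equality case (forcing $C=\mu I_n$ and hence $AB=\mu I_n$) is also complete and correct.
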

See e.g. \cite{Serre} for the proof of the lemma and further discussions.

\end{document}